\DeclareFontFamily{U}{mathb}{\hyphenchar\font45}
\DeclareFontShape{U}{mathb}{m}{n}{
      <5> <6> <7> <8> <9> <10> gen * mathb
      <10.95> mathb10 <12> <14.4> <17.28> <20.74> <24.88> mathb12
      }{}
\DeclareSymbolFont{mathb}{U}{mathb}{m}{n}
\DeclareMathSymbol{\righttoleftarrow}{3}{mathb}{"FD}
\theoremstyle{plain}
\newtheorem{prop}{Proposition}[section]
\newtheorem{theo}[prop]{Theorem}
\newtheorem{coro}[prop]{Corollary}
\newtheorem{lemm}[prop]{Lemma}
\theoremstyle{remark}
\newtheorem{rema}[prop]{Remark}
\newtheorem{assu}{Assumption}
\theoremstyle{definition}
\newtheorem{defi}[prop]{Definition}
\newtheorem{exam}[prop]{Example}
\numberwithin{equation}{section}
\newcommand{\A}{{\mathbb A}}
\newcommand{\C}{{\mathbb C}}
\newcommand{\PP}{{\mathbb P}}
\newcommand{\bP}{{\mathbb P}}
\newcommand{\G}{{\mathbb G}}
\newcommand{\N}{{\mathbb N}}
\newcommand{\R}{{\mathbb R}}
\newcommand{\Z}{{\mathbb Z}}
\newcommand{\cA}{{\mathcal A}}
\newcommand{\cB}{{\mathcal B}}
\newcommand{\cO}{{\mathcal O}}
\newcommand{\cI}{{\mathcal I}}
\newcommand{\cK}{{\mathcal K}}
\newcommand{\cX}{{\mathcal X}}
\newcommand{\cY}{{\mathcal Y}}
\newcommand{\cU}{{\mathcal U}}
\newcommand{\rH}{{\mathrm H}}
\newcommand{\rK}{{\mathrm K}}
\newcommand{\Wedge}{{\textstyle\bigwedge}}
\newcommand{\eqto}{\stackrel{\lower1.5pt\hbox{$\scriptstyle\sim\,$}}\to}
\newcommand{\eqdashto}{\stackrel{\lower1.5pt\hbox{$\scriptstyle\sim\,$}}\dashrightarrow}
\newcommand{\actsfromleft}{\mathrel{\reflectbox{$\righttoleftarrow$}}}
\newcommand{\actsfromright}{\righttoleftarrow}
\DeclareMathOperator{\Pic}{Pic}
\DeclareMathOperator{\Spec}{Spec}
\DeclareMathOperator{\Hom}{Hom}
\DeclareMathOperator{\Burn}{Burn}
\DeclareMathOperator{\Ind}{Ind}
\DeclareMathOperator{\ord}{ord}
\DeclareMathOperator{\codim}{codim}
\newcommand{\ocB}{{\overline\cB}}
\newcommand{\oBurn}{{\overline\Burn}}
\begin{document}
\title{Equivariant birational types and Burnside volume}

\author{Andrew Kresch}
\address{
  Institut f\"ur Mathematik,
  Universit\"at Z\"urich,
  Winterthurerstrasse 190,
  CH-8057 Z\"urich, Switzerland
}
\email{andrew.kresch@math.uzh.ch}
\author{Yuri Tschinkel}
\address{
  Courant Institute,
  251 Mercer Street,
  New York, NY 10012, USA
}

\email{tschinkel@cims.nyu.edu}

\address{Simons Foundation\\
160 Fifth Avenue\\
New York, NY 10010\\
USA}

\date{July 24, 2020}

\begin{abstract}
We introduce equivariant Burnside groups, 
new invariants in equivariant birational geometry, generalizing 
birational symbols groups for actions of finite abelian groups, due to Kontsevich, Pestun, and the second author, and 
study their properties. We establish a specialization map
for the equivariant birational type of a smooth algebraic variety with an action of a
finite group.
\end{abstract}

\maketitle

\section{Introduction}
\label{sec.intro}
Let $G$ be a finite group, acting on algebraic varieties $X$ and $X'$, defined over a field $k$.   
A classical problem in higher-dimensional algebraic geometry is to determine whether or not there exists a $G$-equivariant birational morphism between $X$ and $X'$. 
This is a formidable challenge already when $G$ is the trivial group and $X'=\bP^n$:
the \emph{rationality problem} has occupied generations of mathematicians and inspired the introduction
of many important ideas and techniques. Recently, there has been a resurgence of interest and activity in rationality questions,
with the introduction of specialization techniques in
\cite{voisin}, \cite{CP}, \cite{NS}, \cite{kontsevichtschinkel},
and their applications to various classes of varieties, e.g.,
in \cite{totaro}, \cite{HKT},  \cite{HPT}, \cite{HT}, \cite{sch1}, \cite{sch2},  \cite{NO}. 

There is also an extensive literature on $G$-equivariant birationality. 
In dimension two, the problem reduces to the study of the $G$-module structure of the Picard lattice $\Pic(X)$,
coupled with the detailed study of the geometry of Del Pezzo surfaces
(see, e.g., \cite{manin}, \cite{DI}, \cite{blanc-thesis}).
The situation is much less clear in higher dimensions, where
the main tools come from the Minimal Model Program and birational rigidity.  

In this paper, we introduce and study a new invariant in $G$-equivariant
birational geometry. 
Informally, it is defined as follows.
Let $X\actsfromright G$ be a smooth projective variety of dimension $n$,
together with a generically free action of $G$.
Then:
\begin{itemize}
\item After a sequence of blow-ups in smooth $G$-invariant subvarieties,
we can assume that the $G$-action on $X$ has only \emph{abelian} stabilizers. 
\item  To each (orbit of a) subvariety with \emph{generic}
abelian stabilizer $H\subseteq G$ we attach a symbol 
\begin{equation}
\label{eqn:symbol}
(H,N_G(H)/H \actsfromleft K,\beta), 
\end{equation}
where $K$ is the function field of the stratum
with action of the normalizer $N_G(H)$ of $H$ in $G$,
such that the restriction to $H$ is the trivial action,
and $\beta$ encodes the generic representation of $H$ on the normal bundle.
\item The invariant $[X \actsfromright G]$ of $X\actsfromright G$ is the sum of all such symbols. 
\end{itemize}
The invariant takes values in the {\em equivariant Burnside group}, 
\[
\Burn_n(G)=\Burn_{n,k}(G),
\]
a quotient of the $\Z$-span of symbols as above by explicit relations, imposed so that
\[
[X \actsfromright G]=[X' \actsfromright G]
\] 
whenever $X$ and $X'$ are $G$-equivariantly birational.
In the definition of
$[X \actsfromright G]$ (Definition \ref{defn.classXG}),
we rely on
\begin{itemize}
\item the divisorialification algorithm of Bergh and Bergh-Rydh, applied to the quotient stack $[X/G]$,
to obtain abelian stabilizers,
\item the $G$-equivariant form of weak factorization.
\end{itemize}
Throughout, we make the assumption that $k$ has characteristic zero,
and we make an additional, simplifying assumption concerning roots of unity in $k$.
(These are stated in Section \ref{sec.prelim-a}.)

When $G$ is trivial, $\Burn_n(G)$ is the Burnside group of fields
\[
\Burn_n=\Burn_{n,k}
\]
defined in \cite{kontsevichtschinkel}.
When $G$ is abelian, $\Burn_n(G)$ admits a surjective homomorphism
to the group $\cB_n(G,k)$ introduced in \cite{kontsevichpestuntschinkel}; the comparison is 
explained in Section \ref{sect:comp}. 
For general $G$, we construct a homomorphism
\[ \Burn_n(G)\to \oBurn_n, \]
to the Burnside group of orbifolds of \cite{Bbar}.
When $X$ is a smooth projective variety with generically free $G$-action,
this sends $[X\actsfromright G]$ to the
class in $\oBurn_n$ of the quotient stack $[X/G]$.

One of our main constructions is a
homomorphism of abelian groups, the
\emph{equivariant Burnside volume} (Definition \ref{defn.equivBurnsidevol})
\[ \rho^G_\pi\colon \Burn_{n,K}(G)\to \Burn_{n,k}(G), \]
when $K$ is the function field of a complete DVR $\mathfrak{o}$ with residue field $k$ and uniformizer $\pi$.
This is an equivariant version of the map of
\cite[\S 5]{kontsevichtschinkel}.
As a consequence we obtain the equivariant specialization of birationality
(Corollary \ref{cor.ebvXG}):
\emph{If
\[ \cX\to \Spec(\mathfrak{o})\qquad\text{and}\qquad \cX' \to \Spec(\mathfrak{o}) \]
are smooth and projective, with generically free actions of a finite group $G$
and $G$-equivariantly birational generic fibers, then the special fibers are
$G$-equivariantly birational.}
 

\medskip
\noindent
\textbf{Acknowledgments:}
We are very grateful to Maxim Kontsevich for essential contributions to
the main constructions and inspiring discussions on these and related questions, and 
to Brendan Hassett for his interest and comments. 
The first author was partially supported by the
Swiss National Science Foundation. The second author was partially supported by NSF grant 2000099.

\section{Preliminaries: algebra}
\label{sec.prelim-a}
In this section we outline an algebraic condition, in terms
of group cohomology, and some of its consequences, that apply to the
function fields in a symbol \eqref{eqn:symbol}.

Let $G$ be a finite group; write $N_G(H)$ for the normalizer of a subgroup $H\subseteq G$. 
Fix a choice of representatives
\[ H_1,\dots,H_r \]
of the conjugacy classes of \emph{abelian} subgroups of $G$ and 
let  $e$ be the least common multiple of the exponents of
$H_1$, $\dots$, $H_r$. Throughout, we work over a field
$k$ of characteristic zero;  for convenience, we assume that $k$ 
contains primitive $e$-th roots of
unity. The character group of an abelian subgroup $H\subseteq G$ will be denoted by 
$$
H^\vee:=\Hom(H,k^\times).
$$

We will use standard facts about Galois algebras $K/K_0$ for a
finite group $\Gamma$, where $K_0$ is a field.
For instance (see, e.g., \cite[\S 4.3]{JLY}):
\begin{itemize}
\item Writing $K\cong K^1\times\dots\times K^\ell$ as product of fields,
for the subgroup $\Gamma^1$ that sends the factor $K^1$ to itself we have
$K$ equivariantly isomorphic to
$\Ind_{\Gamma^1}^\Gamma(K^1)$.
\item (Hilbert's Theorem 90) $H^1(\Gamma,K^\times)=0$.
\end{itemize}

\begin{assu}
\label{assu.weak}
If $H\subseteq G$ is an abelian subgroup, $K_0$ a field containing $k$, and $K/K_0$
a Galois algebra for the group
$N_G(H)/H$, then the composite homomorphism
\begin{equation}
\label{eqn.weak}
\rH^1(N_G(H),K^\times)\to \rH^1(H,K^\times)^{N_G(H)/H}\to H^\vee
\end{equation}
is surjective,
where the first map comes from the Hochschild-Serre spectral sequence
and the second is obtained by writing
\[
K\cong K^1\times\dots\times K^\ell,
\] 
where each $K^i$ is a field, and projecting to
a factor
\[ \rH^1(H,(K^i)^\times)=\Hom(H,(K^i)^\times)=H^\vee. \]
\end{assu}

\begin{rema}
\label{rem.surjequiviso}
That $K_0$ is a field in Assumption \ref{assu.weak} implies:
\begin{itemize}
\item[(i)] The choice of a different factor $K^{i'}$ in
Assumption \ref{assu.weak} modifies the
composite homomorphism \eqref{eqn.weak} by
the automorphism of $H^\vee$, induced by conjugation of $H$ by
a suitable element of $N_G(H)$.
\item[(ii)] Assumption \ref{assu.weak} is therefore
independent of the choice of factor $K^i$.
\item[(iii)] The composite homomorphism \eqref{eqn.weak} is always injective,
since by the Hochschild-Serre spectral sequence,
the kernel is identified with $\rH^1(N_G(H)/H,K^\times)$,
which is zero by Hilbert's Theorem 90.
Thus, an equivalent form of
Assumption \ref{assu.weak} is that the composite homomorphism \eqref{eqn.weak}
is an isomorphism.
\end{itemize}
\end{rema}

\begin{prop}
\label{prop.weak}
Let $H$ be an abelian normal subgroup of a finite group $N$,
$K_0$ a field containing primitive $e$-th roots of unity, where $e$ is the
exponent of $H$, and $K/K_0$ a Galois algebra for the group $N/H$.
Write
\[ K\cong K^1\times\dots\times K^\ell, \]
where $K^1$, $\dots$, $K^\ell$ are fields,
choose some $i\in\{1,\dots,\ell\}$,
and suppose that the composite homomorphism
\[ \rH^1(N,K^\times)\to \rH^1(H,K^\times)^{N/H}\to \rH^1(H,(K^i)^\times) \]
is an isomorphism.
Then, for every $m\in \N$
there is a unique map of non-abelian cohomology sets
\begin{equation}
\label{eqn.nonabeliancoh}
\rH^1(N,GL_m(K))\to \rH^1(H,GL_m(K_0))
\end{equation}
that is compatible with extension of scalars
\[ \rH^1(H,GL_m(K_0))\to \rH^1(H,GL_m(K^i)) \]
and restriction
\[ \rH^1(N,GL_m(K))\to \rH^1(H,GL_m(K^i)), \]
and this map \eqref{eqn.nonabeliancoh} is bijective.
\end{prop}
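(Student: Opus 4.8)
The plan is to reduce to the case $\ell=1$, where $K$ is a field and $K/K_0$ is a genuine Galois extension with group $\Gamma:=N/H$, and then to analyze $\rH^1(N,GL_m(K))$ through the linear algebra of $K$-semilinear representations.

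First I would carry out the reduction. Writing $\Gamma^1\subseteq\Gamma$ for the stabilizer of the factor $K^i$ and $N^1\subseteq N$ for its preimage, the decomposition $GL_m(K)=\prod_j GL_m(K^j)$ exhibits $GL_m(K)$ as an induced $N$-group $\Ind_{N^1}^N GL_m(K^i)$, with $N$ permuting the factors through the transitive $\Gamma$-action on $\{1,\dots,\ell\}$. Shapiro's lemma for non-abelian $\rH^1$ then gives a bijection of pointed sets $\rH^1(N,GL_m(K))\cong\rH^1(N^1,GL_m(K^i))$, under which the displayed restriction $\rH^1(N,GL_m(K))\to\rH^1(H,GL_m(K^i))$ becomes restriction from $N^1$ to $H$ (note $H\subseteq N^1$, as $H$ lies in the kernel of $N\to\Gamma$). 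The same identification turns the hypothesis for $(N,K,i)$ into the corresponding hypothesis for $(N^1,K^i)$ and leaves the target $\rH^1(H,GL_m(K_0))$ and the extension-of-scalars map untouched. Thus it suffices to treat $(N^1,K^i)$; that is, we may assume $\ell=1$.

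Now assume $K$ is a field. Since $K_0$ already contains the primitive $e$-th roots of unity and $e$ is the exponent of $H$, every homomorphism from $H$ to $K^\times$ lands in $\mu_e\subseteq K_0$; hence $\Hom(H,K_0^\times)=\Hom(H,K^\times)=H^\vee$, and both $\rH^1(H,GL_m(K_0))$ and $\rH^1(H,GL_m(K))$ are canonically the set of size-$m$ multisets of elements of $H^\vee$ (as $H$ acts trivially and is abelian with enough roots of unity). Consequently extension of scalars is a bijection, which forces $\Phi:=(\text{ext})^{-1}\circ(\text{restriction})$ to be the unique compatible map, and reduces its bijectivity to that of the restriction $\rH^1(N,GL_m(K))\to\rH^1(H,GL_m(K))$. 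Unwinding the hypothesis in this field case, the assumed isomorphism factors as $\rH^1(N,K^\times)\to(H^\vee)^{N/H}\hookrightarrow H^\vee$, so its surjectivity forces $(H^\vee)^{N/H}=H^\vee$, i.e. $H$ is central in $N$; moreover the isomorphism provides, for each $\chi\in H^\vee$, a unique class $L_\chi\in\rH^1(N,K^\times)$ restricting to $\chi$, that is, a unique one-dimensional $K$-semilinear $N$-representation on which $H$ acts by $\chi$.

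Finally I would prove the bijectivity by interpreting $\rH^1(N,GL_m(K))$ as the set of isomorphism classes of $m$-dimensional $K$-semilinear representations $V$ of $N$, semilinear for $N\to\Gamma=\Gal(K/K_0)$, on which $H$ acts $K$-linearly. Because $H$ is central and its characters take values in $\mu_e\subseteq K_0$, the isotypic decomposition $V=\bigoplus_\chi V_\chi$ is $N$-stable, so the restriction map is $V\mapsto(\dim_K V_\chi)_\chi$ to multisets. Surjectivity is then witnessed by $\bigoplus_\chi L_\chi^{\oplus d_\chi}$; for injectivity, given $V,V'$ with $\dim_K V_\chi=\dim_K V'_\chi$ for all $\chi$, I would tensor each summand with $L_\chi^{-1}$ to reduce to the trivial character, where the representation descends to a $K_0$-vector space and is therefore determined up to isomorphism by its dimension, via the generalized Hilbert Theorem 90 $\rH^1(\Gamma,GL_d(K))=1$. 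This yields $V_\chi\cong V'_\chi$ and hence $V\cong V'$, so the restriction, and therefore $\Phi$, is bijective. The main obstacle is the bookkeeping in the non-abelian setting: making Shapiro's lemma and the twist by the invertible objects $L_\chi$ precise as operations on pointed sets of cocycles, and verifying that restriction and the hypothesis are compatible with the Shapiro identification. Once centrality and the classes $L_\chi$ are extracted from the hypothesis, the representation-theoretic core is routine.
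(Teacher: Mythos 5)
Your proof is correct, but it takes a genuinely different route from the paper's. You first reduce to the case where $K$ is a field via Shapiro's lemma for the induced group $GL_m(K)=\Ind_{N^1}^N GL_m(K^i)$, observe that the hypothesis then forces $H$ to be central in $N^1$ (since the composite factors through $(H^\vee)^{N^1/H}$ and the coefficients $\mu_e$ are Galois-fixed), and classify semilinear representations by isotypic decomposition under the central $H$, twisting each isotypic piece by the rank-one objects $L_\chi$ supplied by the hypothesis and invoking $\rH^1(\Gamma^1,GL_d(K^i))=1$. The paper instead stays with the Galois algebra $K$ throughout and argues by an explicit equivalence of categories: it lifts a basis of $H^\vee$ (built from a fixed primitive $e$-th root of unity and the invariant-factor decomposition of $H$) to cocycles $u_{i,g}$ with $u_{i,g}^{n_i}={}^gv_i/v_i$, and uses this Kummer-theoretic data to assign functorially a Galois $H$-algebra $L_0[t_1,\dots,t_r]/(t_j^{n_j}-\iota(v_j)w_j^{-n_j})$ to every Galois $N$-algebra receiving $K$; the bijection on $\rH^1(-,GL_m)$ then falls out of the induced equivalence on categories of functorial free modules. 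Both arguments hinge on the same input -- the hypothesis provides a canonical one-dimensional lift of each character of $H$ -- but your version makes the centrality of $H$ and the multiplicity-space structure of the bijection explicit, and your remark that extension of scalars is actually bijective (not merely injective, which is all the paper uses) streamlines the uniqueness step; the paper's categorical construction, in exchange, avoids the non-abelian Shapiro bookkeeping and makes the compatibility with the maps to $\rH^1(H,GL_m(K^i))$ immediate from functoriality. The only points you should spell out in a final write-up are the ones you already flag: the compatibility of the Shapiro identification with restriction to $H\subseteq N^1$ and with the hypothesis, and the $N$-stability of the isotypic decomposition (which uses both centrality and the fact that character values lie in $K_0^{\Gamma^1}$).
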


\begin{proof}
The extension of scalars map is injective (by standard representation theory).
It suffices to exhibit a bijective map \eqref{eqn.nonabeliancoh} that is compatible
with the maps to $\rH^1(H,GL_m(K^i))$.

Let $\zeta\in K_0$ be  a primitive $e$-th root of unity.
We write, according to the structure theorem of finite abelian groups,
\[ H\cong \Z/n_1\Z\times\dots\times \Z/n_r\Z, \quad n_1, \dots, n_r\ge 2, \quad n_i\mid n_{i+1}
\text{  for all } i; \]
with
$n_r=e$.
Sending the $i$-th generator of $H$ to $\zeta^{e/n_i}$ and all other
generators to $1$, we have an element of
$\rH^1(H,K_0^\times)$, which by hypothesis comes from a
$1$-cocycle $(u_{i,g})_{g\in N}$ with values in $K^\times$.
Furthermore, $(u_{i,g}^{n_i})_{g\in N}$ is a
$1$-coboundary, i.e., for some $v_i\in K^\times$ we have
\[ u_{i,g}^{n_i}={}^gv_i/v_i,\qquad\text{for all } g\in N. \]

The data of $(u_{i,g}^{n_i})_{i,g}$ and $(v_i)_i$ give us a way to assign,
functorially, a Galois algebra for the group $H$ over an
\'etale $K_0$-algebra $L_0$ to every Galois algebra $L/L_0$ for the group $N$
with $N$-equivariant $K_0$-algebra homomorphism $K\to L$.
Specifically, given $L/L_0$ and
$\iota\colon K\to L$, for every $i$ we apply Hilbert's Theorem 90 to obtain
$w_i\in L^\times$ such that 
\[ \iota(u_{i,g})={}^gw_i/w_i,\qquad \text{for all } g\in N.
\]
For every $i$, then, $\iota(v_i)w_i^{-n_i}$ is Galois-invariant, i.e., lies in $L_0$,
and is unique up to multiplication by an element of $(L_0^\times)^{n_i}$.
This observation lets us assign, functorially, the
Galois algebra
\[ L_0[t_1,\dots,t_r]/(t_1^{n_1}-\iota(v_i)w_1^{-n_1},\dots,
t_r^{n_r}-\iota(v_r)w_r^{-n_r}) \]
for the group $H$ over $L_0$.
The functorial association is fully faithful.
Essential surjectivity follows easily from the fact that any two
Galois algebras $L_0'/L_0$ and $L_0''/L_0$ for the group $H$ become
equivariantly isomorphic after passage to a suitable \'etale algebra
over $L_0$.

The previous paragraph gives us an equivalence of categories.
This in turn induces an equivalence of categories of the
functorial associations of a free $L_0$-module of rank $m$ to every
Galois algebra $L/L_0$ for the group $N$ with $N$-equivariant $K_0$-algebra
homomorphism $K\to L$,
respectively, to Galois algebras $L'_0/L_0$ for the group $H$.
The isomorphism classes of objects are given by
$\rH^1(N,GL_m(K))$, respectively, $\rH^1(H,GL_m(K_0))$, so the
equivalence of categories gives us a bijective map \eqref{eqn.nonabeliancoh}.
The compatibility with the maps to $\rH^1(H,GL_m(K^i))$ is immediate from
the construction and the fact that $m$-dimensional representations of
$H$ over $K_0$, whose equivalence classes are parametrized by
$\rH^1(H,GL_m(K_0))$, decompose as direct sums of
$1$-dimensional representations.
\end{proof}

\noindent
\textbf{Action construction (A):}
Let
\[ H\subseteq G, \qquad K_0, \qquad N_G(H)/H\actsfromleft K, \] 
satisfy Assumption \ref{assu.weak}, let
\[
a_1,\dots,a_c\in A:=H^\vee
\]
be characters, and define
\[ \overline{H}:=\bigcap_{i=1}^c \ker(a_i), \]
with character group
\[ \overline{A}:=A/\langle a_1,\dots,a_c\rangle. \]
We take
\[ u_1,\dots,u_c \]
to be $1$-cocycles, $u_i=(u_{i,g})_{g\in N_G(H)}$,
with values in $K^\times$, corresponding by the isomorphism that we have,
thanks to Remark \ref{rem.surjequiviso}(iii),
to $a_1$, $\dots$, $a_c$; these determine an action
\[ N_G(H)\actsfromleft K(t_1,\dots,t_c). \]
Writing, as in Assumption \ref{assu.weak},
\[ K\cong K^1\times\dots\times K^\ell, \]
we define $K'$ to be the product of all $K^i$ such that the action
\[ H\actsfromleft K^i(t_1,\dots,t_c) \]
restricts to the \emph{trivial} action of $\overline{H}$.
Then the action of
\[ \overline{H}':=N_{N_G(H)}(\overline{H}) \]
on $K'(t_1,\dots,t_c)$ restricts to the trivial action of $\overline{H}$,
thus we have
\[ \overline{H}'/\overline{H}\actsfromleft K'(t_1,\dots,t_c). \]
We define
\[ \overline{K}:=\Ind_{\overline{H}'}^{N_G(\overline{H})}K'(t_1,\dots,t_c). \]

\section{Preliminaries: geometry}
\label{sec.prelim-g}
Let $G$ be a finite group and $X$ a reduced quasiprojective
scheme over $k$, equipped with a generically free action of $G$. 
By convention, we suppose that the generic point of any component has
dense orbit in $X$.
Then we will say that $X$ is a quasiprojective variety with
generically free $G$-action.
We suppose that $X$ is smooth and remind the reader that this can be
achieved with equivariant resolution of singularities.
(Resolution of singularities is available in a functorial form,
see, e.g., \cite{villamayor}, \cite{bierstonemilman}.)
We will also make use of
functorial weak factorization, established in \cite{abramovichtemkin}.

By a \emph{smooth blow-up} of $X$ we mean the blow-up of $X$ along
a smooth subscheme.
We say that a smooth blow-up is \emph{equivariant} when it is the
blow-up of $X$ along a smooth $G$-invariant subscheme.

\begin{rema}
\label{rem.geom}
Geometry naturally leads to situations where
Assumption \ref{assu.weak} is relevant.
Suppose that $G$ acts on $X$ with abelian stabilizer groups.
Let $V\subset X$ be a reduced subscheme, such that if we express the
decomposition into irreducible components as
\[ V=V^1\cup\dots\cup V^q, \]
then $G$ acts transitively on $\{V^1,\dots,V^q\}$.
The generic stabilizer groups of the $V^i$ comprise a single
conjugacy class of abelian subgroups of $G$, say, the one represented by
$H\in \{H_1,\dots,H_r\}$.
We write
\begin{align*}
\{Y^1,\dots,Y^\ell\}&:=\{V^i\,|\,\text{the stabilizer of the
generic point of $V^i$ is $H$}\}, \\
Y&:=Y^1\cup\dots\cup Y^\ell,
\end{align*}
and note that $N_G(H)$ acts transitively on $\{Y^1,\dots,Y^\ell\}$.
We choose a trivialization of the fiber of the normal bundle
\[ \mathcal{N}_{Y/X} \]
at the generic point of each $Y^i$.
Letting $m$ denote the codimension of $Y$,
the $N_G(H)$-action gives rise to a $1$-cocycle, representing a class in
\[ \rH^1\big(N_G(H),GL_m(k(Y))\big), \]
where $k(Y)$ denotes $k(Y^1)\times\dots\times k(Y^\ell)$.
We have a Galois algebra $k(Y)/k(Z)$ for the group $N_G(H)/H$,
where $Z$ denotes the quotient variety 
$$
Y/N_G(H)\cong V/G,
$$ 
and when $m=1$ (respectively $m>1$)
the class may be studied via the map in Assumption \ref{assu.weak}
(respectively in Proposition \ref{prop.weak}).
\end{rema}

For $H\in \{H_1,\dots,H_r\}$, the $H$-fixed locus $X^H$ is smooth
and stable under the action of $N_G(H)$.
We consider an $N_G(H)$-orbit of components of $X^H$ where the
stabilizer at the generic points of the components is equal to $H$.
Denoting one such by $Y$, we have a generically free action of $N_G(H)/H$
on $Y$ with an irreducible quotient variety $Z$.

\begin{assu}
\label{assu.strong}
The $G$-action on $X$ satisfies: 
\begin{itemize}
\item All stabilizers of the action of $G$ on $X$ are abelian.
\item For every $H\in\{H_1,\dots,H_r\}$, with character group $H^\vee$,
and every $Y$ as above,
the composite homomorphism
\[
\Pic^G(X)
\to \rH^1(N_G(H),k(Y)^\times) 
\to
\rH^1(H,k(Y)^\times)^{N_G(H)/H}\to H^\vee
\]
is surjective.
\end{itemize}
\end{assu}

\begin{rema}
\label{rem.strong}
We make the following observations:
\begin{itemize}
\item[(i)] Assumption \ref{assu.strong} implies
Assumption \ref{assu.weak}, for all abelian $H\subseteq G$ and
Galois algebras as above $k(Y)/k(Z)$ for the group $N_G(H)/H$.
\item[(ii)] Assumption \ref{assu.strong} implies that there exists a
finite collection of $G$-linearized line bundles on $X$ such that the
images of the classes in $\Pic^G(X)$ generate $H^\vee$, for every $H$ and $Y$
as above.
\item[(iii)] Assumption \ref{assu.strong} is equivalent to the existence of
a finite collection of $G$-linearized line bundles on $X$ such that
the associated morphism of stacks
\[ [X/G]\to B\G_m\times\dots\times B\G_m \]
(where the number of factors $B\G_m$ is the number of linearized line bundles)
is representable.
\end{itemize}
In (iii), $[X/G]$ is the quotient stack associated with the
$G$-action on $X$ and $B\G_m$ is the classifying stack of $\G_m$, so that
each $G$-linearized line bundle on $X$ determines
a morphism $[X/G]\to B\G_m$.
For a given finite collection of $G$-linearized line bundles on $X$
the equivalence of the conditions stated in (ii) and (iii) is given in
\cite[Rem.\ 7.14]{bergh}.
\end{rema}

\begin{prop}
\label{prop.divisorialification}
Let $X$ be a smooth quasiprojective variety with a generically free action of $G$.
There exists a sequence of equivariant smooth blow-ups
\[ X'=X_n\to\dots\to X_1\to X_0=X, \]
such that $X'$, with its $G$-action, satisfies Assumption \ref{assu.strong}.
\end{prop}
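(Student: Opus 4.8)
The plan is to pass to the quotient stack $\cX_0:=[X/G]$ and run the functorial divisorialification of Bergh \cite{bergh} and Bergh--Rydh. Since $\chara k=0$ and $G$ is finite, $\cX_0$ is a smooth separated Deligne--Mumford stack of finite type over $k$ with finite inertia, hence tame; as $X$ is quasiprojective, $\cX_0$ has the resolution property, so the hypotheses of the cited algorithm are met. I would first record the dictionary between equivariant geometry on $X$ and stacky geometry on $\cX_0$: a smooth closed substack $\cZ\subseteq[X_i/G]$ is the same datum as a smooth $G$-invariant closed subscheme $Z_i\subseteq X_i$, namely its preimage under the smooth covering $X_i\to[X_i/G]$; and, because blowing up commutes with the flat morphism $X_i\to[X_i/G]$, the ordinary blow-up of $[X_i/G]$ along $\cZ$ is canonically $[\mathrm{Bl}_{Z_i}X_i/G]$, i.e.\ it is induced by the equivariant smooth blow-up $X_{i+1}:=\mathrm{Bl}_{Z_i}X_i\to X_i$.

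Applying the divisorialification theorem to $\cX_0$ then yields a functorial sequence of stacky blow-ups $\cX_n\to\dots\to\cX_1\to\cX_0$ after which $\cX_n$ is divisorial, that is, admits a representable morphism to a product $B\G_m\times\dots\times B\G_m$. The step I must verify is that each of these operations is an ordinary blow-up in a smooth center --- that no root-stack operation intervenes --- so that, by the dictionary above, every $\cX_i$ is again the quotient $[X_i/G]$ of a smooth quasiprojective scheme and every $\cX_{i+1}\to\cX_i$ is induced by an equivariant smooth blow-up. Granting this, the stacky sequence descends to a chain of equivariant smooth blow-ups $X'=X_n\to\dots\to X_0=X$ with $[X'/G]\cong\cX_n$ divisorial.

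It remains to translate divisoriality into Assumption \ref{assu.strong}. A representable morphism $[X'/G]\to B\G_m\times\dots\times B\G_m$ is the same as a finite collection of $G$-linearized line bundles on $X'$ for which the associated morphism is representable; representability forces the stabilizer of each point of $[X'/G]$ to inject into a torus, so all stabilizers of the $G$-action on $X'$ are diagonalizable, in particular abelian, giving the first bullet of Assumption \ref{assu.strong}. For the second bullet I would invoke Remark \ref{rem.strong}(iii), which identifies representability of this morphism with the condition of Remark \ref{rem.strong}(ii) that the images in $\Pic^G(X')$ of the chosen line bundles generate $H^\vee$ for every $H\in\{H_1,\dots,H_r\}$ and every admissible $Y$; this generation is exactly the surjectivity of the composite homomorphism in Assumption \ref{assu.strong}. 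Hence $X'$ satisfies Assumption \ref{assu.strong}.

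The main obstacle is the claim, flagged in the second paragraph, that divisorialification of $[X/G]$ is realized by ordinary blow-ups in smooth $G$-invariant centers and therefore stays within quotients of smooth schemes. Root-stack operations, which occur in full destackification, would introduce genuine stackiness along divisors and destroy the identification $\cX_i=[X_i/G]$; the content of the citation to Bergh--Rydh is precisely that the \emph{divisorialification} phase, as opposed to the subsequent destackification, can be carried out with ordinary blow-ups alone. Everything else is either standard (the stack-theoretic dictionary, tameness, the resolution property) or formal (the passage through Remark \ref{rem.strong}). Concretely, one wants the algorithm's centers to be the smooth $G$-invariant loci cut out by the non-divisorial stabilizer strata, and its exceptional divisors to furnish exactly the $G$-linearized line bundles needed to generate the character groups $H^\vee$.
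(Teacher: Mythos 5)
Your proposal is correct and follows essentially the same route as the paper: both apply the Bergh and Bergh--Rydh divisorialification algorithm to the quotient stack $[X/G]$, observe that its steps are ordinary blow-ups in smooth centers (hence descend to equivariant smooth blow-ups of $X$), and translate the resulting divisoriality into Assumption \ref{assu.strong} via Remark \ref{rem.strong}(iii). The paper additionally spells out the algorithm in equivariant language --- defining the divisorial index at a point and blowing up the locus where it is maximal --- which makes explicit the point you flag, namely that only ordinary blow-ups along smooth $G$-invariant centers occur.
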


\begin{proof}
This is a consequence of the \emph{divisorialification} algorithm of
Bergh \cite{bergh} and Bergh-Rydh \cite{berghrydh}, applied to the
quotient stack $[X/G]$.
For the convenience of the reader we explain how this works in the
language of equivariant geometry.
Let $D=D_1\cup\dots\cup D_\ell$ be a simple normal crossing divisor on $X$,
where each $D_i$ is $G$-invariant.
The divisorial index is a quantity attached to a point
$x$ (closed or not) of $X$ and the divisor $D$.
Let $H$ denote the stabilizer of $x$
(so $H$ acts trivially on $\overline{\{x\}}$), and let $\mathcal{N}$ denote the
normal bundle $\mathcal{N}_{\overline{\{x\}}/X}|_{\{x\}}$.
Each $D_i$ that contains $x$ determines a one-dimensional linear representation
of $H$.
Denoting the intersection of the kernels of these representations
by $H'$,
the divisorial index at $x$ is the dimension of the
nontrivial part of $\mathcal{N}$, as a representation of $H'$.

We regard $X$ as equipped with a simple normal crossing divisor
$D=D_1\cup\dots\cup D_\ell$ on $X$,
where each $D_i$ is $G$-invariant: initially $D$ may be taken to
be empty, and with
every blow-up we replace each $D_i$ by its proper transform and adjoin
the exceptional divisor as $D_{\ell+1}$.
Let $m$ denote the maximal value of the divisorial index, over all the
points of $X$.
If $m>0$, then by \cite[\S 7--8]{bergh} and \cite[\S 6]{berghrydh}
the points with divisorial index $m$ are the points of a smooth
$G$-invariant closed subscheme $W\subset X$ which has normal crossing
with $D_1$, $\dots$, $D_\ell$, and after blowing up $X$ along $W$,
every point of the blow-up has divisorial index less than $m$.
Iterating, we achieve $m=0$.
\end{proof}

\begin{rema}
\label{rem.divisorialification}
In the proof of Proposition \ref{prop.divisorialification} we may
start with any
simple normal crossing divisor $D=D_1\cup\dots\cup D_{\ell}$ on $X$,
where each $D_i$ is $G$-invariant.
This transforms in the indicated manner with each blow-up to yield,
finally, a simple normal crossing divisor 
$D'=D'_1\cup\dots\cup D'_{\ell'}$ on $X'$.
A variant, which does not change the variety $X'$ that we get at the end
(up to isomorphism), is to write at every step
\[ W=W_1\cup\dots\cup W_t, \]
a disjoint union, where each $W_i$ is a
$G$-orbit of components, and to index the exceptional divisor over
each $W_i$ separately.
Denoting by $\varphi\colon X'\to X$ the composite map,
with this variant
the support of $\varphi^{-1}(D)$ is necessarily of the form
$\bigcup_{i\in S} D'_i$ for suitable $S\subseteq \{1,\dots,\ell'\}$.
\end{rema}

\begin{exam}
\label{exa.divisorialification}
Even if $k$ contains all roots of unity (e.g., is algebraically closed),
Assumption \ref{assu.weak} is nontrivial.
For instance, let $G=\mathfrak D_8$ be the dihedral group of order $8$, generated by
$\rho$ of order $4$ and $\sigma$ of order $2$. For $H=\langle \rho\rangle$
we consider the Galois algebra $\C(z)/\C(z^2)$ for the group $G/H$.
Assumption \ref{assu.weak} is not satisfied, since
$|\rH^1(G,\C(z)^\times)|=2$.
When this arises geometrically, e.g., from $G$ acting on $\A^3_{\C}$ with
\[
\rho\cdot (x,y,z)=(-y,x,z)\qquad \text{and} \qquad \sigma\cdot (x,y,z)=(x,-y,-z),
\]
divisorialification in the form of Proposition \ref{prop.divisorialification} leads to a situation where
Assumption \ref{assu.strong} is satisfied,
hence as well Assumption \ref{assu.weak}.
\end{exam}

\begin{prop}
\label{prop.equivweakfact}
Let $X$ and $X'$ be smooth projective varieties,
each equipped with a generically free $G$-action,
satisfying Assumption \ref{assu.strong}.
Given a $G$-equivariant birational map
\[
\varphi\colon X'\dashrightarrow X,
\]
restricting to an isomorphism over an open $U\subset X$,
there exists a weak factorization of $\varphi$, where each map is,
or is inverse to, a equivariant smooth blow-up along a center disjoint
from $U$,
and the intermediate varieties in the weak factorization also
satisfy Assumption \ref{assu.strong}.
\end{prop}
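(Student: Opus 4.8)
The plan is to bootstrap the $G$-equivariant functorial weak factorization of \cite{abramovichtemkin} into one whose intermediate varieties satisfy Assumption \ref{assu.strong}, using the divisorialification of Proposition \ref{prop.divisorialification} as the repair mechanism. First I would apply functorial weak factorization to $\varphi$ to obtain a chain of smooth projective $G$-varieties
\[ X'=W_0\dashrightarrow W_1\dashrightarrow\dots\dashrightarrow W_N=X, \]
in which each elementary map is, or is inverse to, an equivariant smooth blow-up, and every center is disjoint from $U$; the disjointness is guaranteed because $\varphi$ is an isomorphism over $U$ and the factorization is functorial. What remains is that the $W_i$ need not satisfy Assumption \ref{assu.strong}, since weak factorization controls only smoothness and the $G$-action, not the divisorial structure.

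To repair the intermediate varieties I would divisorialize them. By Remark \ref{rem.strong}(ii)--(iii), $X$ and $X'$ each carry a finite collection of $G$-linearized line bundles, realized by $G$-invariant divisors $D_X$ and $D_{X'}$, whose classes certify Assumption \ref{assu.strong}; relative to such divisors the divisorial index already vanishes, so the divisorialification of $X$ and of $X'$ is trivial. I would propagate these divisors along the chain, transporting $D_X$ and $D_{X'}$ by proper transform and recording each exceptional divisor as in Remark \ref{rem.divisorialification}, obtaining on each $W_i$ a $G$-invariant simple normal crossing divisor $D_i$, and then apply Proposition \ref{prop.divisorialification} to $(W_i,D_i)$ to produce $\widetilde W_i\to W_i$ with $\widetilde W_i$ satisfying Assumption \ref{assu.strong}. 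Because the endpoints are already divisorial, $\widetilde W_0=X'$ and $\widetilde W_N=X$; and because each $W_i$ agrees with $X$ over $U$, where the divisorial index is zero, every divisorialification center avoids $U$.

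It then remains to splice the $\widetilde W_i$ into a single weak factorization. I would reduce to an elementary step, say a blow-up $W_{i+1}\to W_i$ along a smooth $G$-invariant center having normal crossing with $D_i$, and study the induced $G$-equivariant birational map $\widetilde W_i\dashrightarrow\widetilde W_{i+1}$ between varieties that now satisfy Assumption \ref{assu.strong}. The canonicity of the Bergh--Rydh algorithm should make the two divisorializations agree away from the blow-up center and its exceptional divisor, so that this map is an isomorphism over a large $G$-invariant open; factoring it by a further, divisor-compatible application of weak factorization through varieties again satisfying Assumption \ref{assu.strong}, and concatenating over $i=0,\dots,N-1$, yields the refined factorization with all centers disjoint from $U$.

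The hard part is precisely this compatibility: making rigorous that divisorialification commutes, up to controlled further blow-ups, with the elementary blow-ups and blow-downs of the chain, and that the resulting cascade of weak factorizations terminates while preserving Assumption \ref{assu.strong} and disjointness from $U$ at every stage. The decisive tool is the functoriality of the Bergh--Rydh divisorialification together with the divisor-compatible form of weak factorization; the geometric inputs are all in place, and the genuine work lies in the bookkeeping of the accumulating simple normal crossing exceptional divisors and in verifying that no step reintroduces a violation of the assumption.
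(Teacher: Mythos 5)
Your proposal misses the one observation that makes this proposition essentially immediate, and as a result it runs into a genuine circularity. The paper's proof is: first reduce, by equivariant resolution of singularities applied to the graph, to the case that $\varphi$ is a \emph{morphism} $X'\to X$; then apply functorial weak factorization and note that every intermediate variety $W_i$ admits a $G$-equivariant morphism $W_i\to X$. By Remark \ref{rem.strong}(iii), Assumption \ref{assu.strong} for $X$ is the representability of $[X/G]\to B\G_m\times\dots\times B\G_m$ for a suitable finite collection of $G$-linearized line bundles; pulling these back along $W_i\to X$ and composing the representable morphism $[W_i/G]\to[X/G]$ with this map shows that each $W_i$ satisfies Assumption \ref{assu.strong} automatically. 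No divisorialification of the intermediate varieties is needed, and no splicing problem arises.

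Your repair-and-splice strategy, by contrast, does not close. At the splicing step you must factor the induced map $\widetilde W_i\dashrightarrow \widetilde W_{i+1}$ between varieties satisfying Assumption \ref{assu.strong} "by a further, divisor-compatible application of weak factorization through varieties again satisfying Assumption \ref{assu.strong}" --- but that is verbatim the statement you are trying to prove, so the argument is circular, and you give no mechanism (no induction on a decreasing invariant, no termination argument for the cascade) to break the circle. The auxiliary claims are also shakier than you suggest: the Bergh--Rydh algorithm is functorial with respect to smooth (in particular open) morphisms, not with respect to blow-ups, so there is no reason the divisorializations of $W_i$ and $W_{i+1}$ agree away from the center without further work; and the assertion that the divisorial index vanishes over $U$ on each $W_i$ depends on your transported divisor $D_i$ actually containing divisors realizing the surjectivity in Assumption \ref{assu.strong}, which you have not arranged. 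All of this evaporates once you use the stability of Assumption \ref{assu.strong} under $G$-equivariant morphisms to $X$; I would encourage you to restructure the proof around that single fact.
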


\begin{proof}
It suffices, by equivariant resolution of singularities, to treat the case
that $\varphi$ is a morphism.
Then we apply the functorial weak factorization of \cite{abramovichtemkin} and
notice that, since the intermediate varieties admit a
$G$-equivariant morphism to $X$, by Remark \ref{rem.strong}(iii)
they all satisfy Assumption \ref{assu.strong}.
\end{proof}

\begin{rema}
\label{rem.equivweakfact}
The proof of Proposition \ref{prop.equivweakfact} starts by
applying equivariant resolution of singularities to the
closure in $X'\times X$ of the graph of the
restriction of $\varphi$ to $U$, to reduce to the case that
$\varphi$ is a morphism.
In the presence of boundary divisors $D\subset X$, $D=D_1\cup\dots\cup D_\ell$,
and $D'\subset X'$, $D'=D'_1\cup\dots\cup D'_{\ell'}$, simple normal crossing divisors
with respective complement $U$, where
each $D_i$ is $G$-invariant,
equivariant resolution of singularities with boundary divisor lets us
reduce to the case that $\varphi$ is a morphism and
$\varphi^{-1}(D)$ is a simple normal crossing divisor.
The equivariant weak factorization of \cite{abramovichtemkin} is applicable
to $\varphi\colon X'\to X$ with pair of boundary divisors
$(\varphi^{-1}(D),D)$.
\end{rema}

\begin{rema}
\label{rem.notenoughrootsofunity}
The correct formulation of Assumptions \ref{assu.weak} and
\ref{assu.strong}, when $k$ does not contain enough roots of unity,
is that a factor $K^i$ of $K$ contains primitive $e$-th roots of unity,
where $e$ is the exponent of $H$, and the composite map to
$\rH^1(H,(K^i)^\times)$ is surjective.
(This does not imply that $K_0$ contains primitive $e$-th roots of unity,
as we may see by taking $G$ and $H$ as in Example \ref{exa.divisorialification}
and $G/H\actsfromleft \C$ over $K_0:=\R$.)
\end{rema}

\section{Equivariant Burnside group}
\label{sec.eBg}
In this section we define the equivariant Burnside group and study its first properties.
It receives $G$-equivariant birational invariants of $G$-varieties over a field $k$, 
where $G$ is a finite group.  We maintain the assumption that $k$ 
has characteristic zero and contains primitive
$e$-th roots of unity, where $e$ is the least common multiple of the
exponents of abelian subgroups of $G$. 

It will be convenient for the following to proceed in two steps. 

\begin{defi}
\label{defn.eBg0} 
The {\em symbols group} 
$$
\Burn^0_n(G)
$$ 
is the $\Z$-module with

\medskip
\noindent
\textbf{Generators:}
\[
(H,N_G(H)/H \actsfromleft K,\beta),
\]
where
\begin{itemize}
\item $H\subseteq G$ is an abelian subgroup, 
\item $K/K_0$ is a Galois algebra for the group $N_G(H)/H$,
where $K_0$ is a finitely generated field
of transcendence degree $d\le n$ over $k$, up to isomorphism, satisfying
Assumption \ref{assu.weak},
and 
\item $\beta$ is a faithful $(n-d)$-dimensional linear representation of
$H$ over $k$, with trivial space of invariants, up to equivalence;
by Assumption \ref{assu.weak}, $\beta$
decomposes as a sum of one-dimensional representations, hence we may write
$\beta$ as a sequence of characters, up to order:
\[
\beta=(a_1,\ldots, a_{n-d}), \qquad  a_i\in A:=H^\vee.
\]
\end{itemize}
These are subject to the following \textbf{conjugation relations}:

\medskip
\noindent
\textbf{(C1):} Triples with same subgroup and algebra extension are identified,
\[
\qquad\qquad
(H,N_G(H)/H \actsfromleft K,\beta) = (H,N_G(H)/H \actsfromleft K,\beta'),
\]
when $\beta$ and $\beta'$ are related by conjugation by an element of $N_G(H)$.

\medskip
\noindent
\textbf{(C2):} Triples with subgroups and
algebra extensions related by conjugation by $g\in G$ are identified,
\[
(H,N_G(H)/H \actsfromleft K,\beta) = (H',N_G(H')/H' \actsfromleft K,\beta'),
\qquad H'=gHg^{-1},
\]
when $\beta$ and $\beta'$ are related by conjugation by $g$.
\end{defi}

Notice, if $H'=gHg^{-1}=g'Hg'^{-1}$, then the two symbols, identified with
$(H,N_G(H)/H \actsfromleft K,\beta)$ by applying \textbf{(C2)} to $g$ and $g'$,
will be related by \textbf{(C1)}.
By \textbf{(C2)}, the class of any triple in
$\Burn^0_n(G)$ may be expressed as
$(H,N_G(H)/H\actsfromleft K,\beta)$ with
$H\in \{H_1,\dots,H_r\}$.

\begin{defi}
\label{defn.eBg}
The \emph{equivariant Burnside group}
\[ \Burn_n(G)=\Burn_{n,k}(G) \]
is the $\Z$-module with

\medskip
\noindent
\textbf{Generators:}
\[
(H,N_G(H)/H \actsfromleft K,\beta), 
\]
as above, satisfying conjugation relations \textbf{(C1)}--\textbf{(C2)}
and the following \textbf{blow-up relations} $(n-d\ge 2)$:

\medskip
\noindent
\textbf{(B1):} For all $H$, $K$, and $a_1,a_3\ldots, a_{n-d}\in A$ we have
\[ (H,N_G(H)/H \actsfromleft K, (a_1,-a_1,a_3,\dots,a_{n-d}))=0. \]

\medskip
\noindent
\textbf{(B2):} For all $H$, $K$, and $\beta=(a_1,\ldots, a_{n-d})$, $a_i\in A$, 
\[
(H,N_G(H)/H \actsfromleft K, \beta)=\Theta_1+\Theta_2, 
\]
where
\[
\Theta_1= \begin{cases} 0, & \text{if $a_1=a_2$}, \\ 
(H,N_G(H)/H \actsfromleft K, \beta_1)+(H,N_G(H)/H \actsfromleft K, \beta_2),
& \text{otherwise},
\end{cases}
\]
with 
\[
\beta_1:=(a_1,a_2-a_1,a_3,\ldots, a_{n-d}), \quad \beta_2:=(a_2,a_1-a_2,a_3,\ldots, a_{n-d}),
\]
and 
\[
\hskip -1.9cm
\Theta_2= \begin{cases} 0, & \text{if $a_i\in \langle a_1-a_2\rangle$ for some $i$}, \\
(\overline{H},N_G(\overline{H})/\overline{H} \actsfromleft \overline{K}, \bar{\beta}), & \text{otherwise}, 
\end{cases}
\]
with
\[
\overline{A}:=A/\langle a_1-a_2\rangle, \quad \overline{H}^\vee = \overline{A},
\quad 
\bar{\beta}:=(\bar a_2,\bar a_3,\dots,\bar a_{n-d}), \quad
\bar a_i\in \overline{A},
\]
and $\overline{K}$, with the action, described in Construction \textbf{(A)}
in Section \ref{sec.prelim-a},
applied to the character $a_1-a_2$ (with $c=1$).
\end{defi}

\begin{exam}
\label{exa.D4}
Let $G=\mathfrak D_8$ be as in
Example \ref{exa.divisorialification}, and $H:=\langle \rho^2,\sigma\rangle\subset G$.
In $\Burn_2(G)$ we have
\[ (H,G/H \actsfromleft \C\times \C, (a_1,a_2)), \]
with generators $a_1$, $a_2$ of the character group $A$ of $H$, equal to
\begin{align*}
(H,G/H \actsfromleft \C\times \C, (a_1,a_1+a_2))&+
(H,G/H \actsfromleft \C\times \C, (a_2,a_1+a_2)) \\
&\qquad+(\langle \sigma\rangle,\langle \rho^2,\sigma\rangle/\langle \sigma\rangle
\actsfromleft \C(t),a),
\end{align*}
where $\Theta_1$ from \textbf{(B2)} appears on the first line,
and $\Theta_2$, on the second; in a character group of order $2$ we write
$a$ for the non-identity element.
Here, $\Theta_2$ is obtained from the action of $G$
on $\C(t)\times \C(u)$ given by
\begin{align*}
\rho\cdot t&=-u,& \sigma\cdot t&=-t,\\
\rho\cdot u&=t,& \sigma\cdot u&=u,
\end{align*}
and the nontrivial character in $\overline{A}=A/\langle a_1+a_2\rangle$.
Now $\overline{H}=\langle \rho^2\sigma\rangle$
acts trivially only on the first factor $\C(t)$.
So $\Theta_2$ is
\[ (\langle\rho^2\sigma\rangle,\langle \rho^2,\sigma\rangle/\langle \rho^2\sigma\rangle\actsfromleft \C(t),a), \]
written above in an equivalent form by applying \textbf{(C2)}.
\end{exam}

\begin{defi}
\label{defn.classXG}
Let $X$ be a smooth projective variety over $k$ with generically free
$G$-action, satisfying
Assumption \ref{assu.strong}.
Put
\begin{equation}
\label{eqn.classXG}
[X \actsfromright G]:=
\sum_{i=1}^r \sum_{\substack{\text{$Y\subset X$ with}\\ \text{generic stabilizer $H_i$}}}
(H_i,N_G(H_i)/H_i \actsfromleft k(Y),\beta_Y(X)).
\end{equation}
In the inner sum,
$Y$ is an $N_G(H_i)$-orbit of components
where the generic stabilizer is $H_i$.
We understand $k(Y)$ to be the product of the function fields of the
components, and we let
the generic normal bundle representation along $Y$ be recorded as
$\beta_Y(X)$.
\end{defi}

\begin{exam}
\label{exam:show}

There exist projective equivariant compactifications of 
faithful representations  of abelian groups, of the same dimension, which are not
birationally equivalent.  Indeed, by \cite[Theorem 7.1]{reichstein-y} (see also \cite[Section 1]{serre-cremona}), if $V$ and $W$ are $d$-dimensional faithful 
representations of an abelian group $G$ of rank $r\le d$, and 
$\chi_1,\ldots, \chi_d,$ respectively $\eta_1,\ldots,\eta_d\in G^\vee$, are 
the characters appearing in $V$, respectively $W$, then these are $G$-equivariantly birational if and only if
$$
\chi_1\wedge \ldots\wedge\chi_d = \pm \,\eta_1\wedge \ldots \wedge \eta_d\in \Wedge^d(G^\vee).
$$ 
(This condition is nontrivial only when $r=d$.)
\end{exam}

\begin{rema}
In motivic integration, one has considered an equivariant version $\rK_0^G(\mathrm{Var}_k)$ of the Grothendieck group
of algebraic varieties over a field $k$ (see, e.g., \cite[Lemma 5.1]{loo}). 
Note that in the definition of $\rK_0^G(\mathrm{Var}_k)$, one {\em trivializes} the actions of finite groups on 
$\A^n$, which is a significant coarsening, from the birational perspective, 
as can be seen from Example~\ref{exam:show}.  
\end{rema}

\begin{prop}
\label{prop.eBg}
In $\Burn_n(G)$ we have the following relations:
\begin{itemize}
\item[(i)] If $a_1+\dots+a_j=0$, for some $j\in \{2,\ldots, n-d\}$, then 
\[ (H,N_G(H)/H\actsfromleft K,(a_1,\dots,a_{n-d}))=0. \]
\item[(ii)] For any $2\le j\le n-d$,
\[ (H,N_G(H)/H\actsfromleft K,(a_1,\dots,a_{n-d}))=
\sum_{(I,C_I)} (H_I,N_G(H_I)/H_I\actsfromleft K_I, \beta_I)
\]
where the sum is over pairs $(I,C_I)$ such that
\begin{itemize}
\item $I=\{i_0,\dots,i_c\}\subseteq \{1,\dots,j\}$ is \emph{nonempty},
\item $C_I$ is a \emph{nontrivial} coset of the subgroup
$$
\langle a_{i_1}-a_{i_0},\dots,a_{i_c}-a_{i_0}\rangle\subset A,
$$
\item $I=\{1\le i\le j\,|\,a_i\in C_I\}$,
\item the elements
\[
\bar a_{j+1},\dots,\bar a_{n-d} \in A_I:=A/\langle a_{i_1}-a_{i_0},\dots,a_{i_c}-a_{i_0}\rangle
\]
are \emph{nonzero},
\end{itemize}
and Construction \emph{\textbf{(A)}} in Section \ref{sec.prelim-a} is applied to
the characters 
$$
a_{i_1}-a_{i_0}, \quad \dots, \quad a_{i_c}-a_{i_0}
$$ 
to obtain
$$
(H_I,N_G(H_I)/H_i\actsfromleft K_I,\beta_I),
$$ 
with
$H_I:=\overline{H}$, $K_I:=\overline{K}$, and
\[ 
\beta_I:=(\bar a_{i_0},
\bar a_{i'_1}-\bar a_{i_0}, \dots,
\bar a_{i'_{j-c-1}}-\bar a_{i_0},
\bar a_{j+1}, \dots,
\bar a_{n-d}), \]
where $\{i'_1,\dots,i'_{j-c-1}\}\cup I=\{1,\dots,j\}$.
\end{itemize}
\end{prop}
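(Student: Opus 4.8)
The plan is to establish (ii) first, by induction on $j$, and then to deduce (i) from it; note that logically nothing in (ii) requires (i), since the nonvanishing conditions in (ii) only detect \emph{zero characters}, not zero-sums. Throughout I would use that a symbol depends only on the multiset $\{a_1,\dots,a_{n-d}\}$, so that I may freely reorder and single out any pair for \textbf{(B2)}.

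For the base case $j=2$ of (ii), I would check that the indexing set of pairs $(I,C_I)$ reproduces \textbf{(B2)} term by term. The two singletons $I=\{1\}$ and $I=\{2\}$, each with $C_I$ the corresponding one-element coset of the trivial subgroup, yield $\beta_1$ and $\beta_2$, and are admissible exactly when $a_1\neq a_2$, matching $\Theta_1$; the block $I=\{1,2\}$, with $C_I$ a nontrivial coset of $\langle a_1-a_2\rangle$, yields the term built from Construction \textbf{(A)} applied to $a_1-a_2$, and is admissible exactly when no $a_i$ lies in $\langle a_1-a_2\rangle$, matching $\Theta_2$. Here the conditions that $\bar a_3,\dots,\bar a_{n-d}$ be nonzero in $A_I$ are precisely the conditions excluding a trivial summand, i.e. guaranteeing a legitimate generator.

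For the inductive step I would assume (ii) for $j-1$ and apply it, obtaining a sum over blocks $I\subseteq\{1,\dots,j-1\}$ in whose terms $\bar a_j$ sits in the tail, and then apply \textbf{(B2)} to the pair $(\bar a_{i_0},\bar a_j)$ in each such term so as to resolve $\bar a_j$ as well. The heart of the matter is the bookkeeping: the $\Theta_2$-type child of block $I$ corresponds to enlarging $I$ to $I\cup\{j\}$, with $L_I$ enlarged by $a_j-a_{i_0}$ and the coset refined accordingly, while the $\Theta_1$-type children correspond to $j$ occupying a block separate from $I$. I would verify that, after summing over all $I\subseteq\{1,\dots,j-1\}$, every admissible pair $(I',C_{I'})$ with $I'\subseteq\{1,\dots,j\}$ is produced \emph{exactly once}, carrying the correct $H_{I'}=\overline H$, $K_{I'}=\overline K$, and $\beta_{I'}$. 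What makes the field data agree is that a two-step use of Construction \textbf{(A)} — first on the differences within $\{1,\dots,j-1\}$, then on $a_j-a_{i_0}$ — composes to a single application of \textbf{(A)} to the full set of differences; this compatibility, together with checking that the degenerate possibilities ($\bar a_{i_0}=\bar a_j$, or $a_j-a_{i_0}\in L_I$, or a tail character vanishing in the new quotient) line up exactly with the cases where \textbf{(B2)} kills a term or a block fails maximality or nonvanishing, is the step I expect to be the main obstacle. I would calibrate the bookkeeping against the cases $j=3$ and one block of size $2$ before writing it in general.

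Finally, for (i) I would feed $a_1+\dots+a_j=0$ into the expansion (ii) for this $j$ and argue that the right-hand side vanishes. The base case $j=2$ is immediate, since $a_2=-a_1$ and \textbf{(B1)} gives $0$. For $j\ge 3$, each block term $\beta_{I}$ inherits a linear relation among its characters coming from the zero-sum; the plan is to show that every such term either specializes this relation to a pair of opposite characters and dies by \textbf{(B1)}, or is a zero-sum configuration of strictly smaller size handled by the inductive hypothesis. The delicate point — and where I would concentrate effort, since naive termwise vanishing is \emph{not} automatic — is to organize the terms so that the descent is genuine rather than circular; if a termwise argument proves too weak, the fallback is to exploit a sign-reversing pairing of the block terms against those of the "opposite" configuration $(-a_1,\dots,-a_j)$, using the relation $(a)=-(-a)$ for one-dimensional symbols that itself follows from \textbf{(B1)} and \textbf{(B2)}, so that (i) emerges as a global cancellation in (ii) rather than from the vanishing of individual summands.
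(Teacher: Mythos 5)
Your plan for part (ii) is essentially the paper's proof: induction on $j$, applying \textbf{(B2)} to the pair consisting of $\bar a_{i_0}$ and the image of the newly adjoined character inside each term of the previous expansion, with the composability of Construction \textbf{(A)} carrying the field data and a case split according to whether the new character coincides with one of the old ones (in which case \textbf{(B1)} kills the offending terms). You are also right that this does not invoke (i) beyond \textbf{(B1)}.

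The genuine gap is part (i) for $j\ge 3$. Deducing (i) from (ii) would require the right-hand side of (ii) to vanish when $a_1+\dots+a_j=0$, and it does not: already for $j=2$ and $a_2=-a_1$ the expansion reads $0=(H,\,\cdot\,,(a_1,-2a_1,a_3,\dots))+(H,\,\cdot\,,(-a_1,2a_1,a_3,\dots))+\Theta_2$, where $\Theta_2$ is a symbol for the subgroup $\ker(2a_1)$ and is in general nonzero. Your fallback, a sign-reversing pairing based on the relation ``$(a)=-(-a)$'', is not available: for one-dimensional representation parts there are no blow-up relations at all (they are imposed only for $n-d\ge 2$), and in higher dimension the computation just displayed shows that \textbf{(B1)} and \textbf{(B2)} give antisymmetry only up to a correction term supported on a proper subgroup. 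So neither of your two routes to (i) closes. The paper proves (i) by its own induction on $j$, independent of (ii): assume (i) for zero-sums of length $j$, and suppose $a_1+\dots+a_{j+1}=0$ (with $a_1+a_2\ne 0$, else \textbf{(B1)} applies directly). Apply \textbf{(B2)} to the auxiliary tuple $\beta=(a_1,a_1+a_2,a_3,\dots,a_{n-d})$. The symbol of $\beta$ (via its entries in positions $2,\dots,j+1$), the term with representation $(a_1+a_2,-a_2,a_3,\dots,a_{n-d})$ (via positions $1,3,\dots,j+1$), and the term $\Theta_2$ (a symbol over $A/\langle a_2\rangle$, where the images of the corresponding $j$ characters still sum to zero) each vanish by the induction hypothesis; the only surviving term of the \textbf{(B2)} relation is $\beta_1=(a_1,a_2,\dots,a_{n-d})$, which is therefore zero. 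You should replace your argument for (i) with an induction of this shape.
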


\begin{proof}
We prove (i) and (ii) by induction on $j$, where the base case
$j=2$ is \textbf{(B1)}, respectively \textbf{(B2)}.
For the inductive step of (i), we set
\[ \beta:=(a_1,a_1+a_2,a_3,\dots,a_{n-d}). \]
If $a_1+\dots+a_{j+1}=0$, then \textbf{(B2)} and the induction hypothesis
yield
\[ (H,N_G(H)/H,(a_1,a_2,\dots,a_{n-d}))=0. \]

We carry out the inductive step of (ii).
Apply the induction hypothesis:
\[ (H,N_G(H)/H\actsfromleft K,(a_1,\dots,a_{n-d}))=
\sum_{(I,C_I)} (H_I,N_G(H_I)/H_I\actsfromleft K_I, \beta_I),
\]
and, for each $(I,C_I)$, apply \textbf{(B2)} to 
weights in $\beta_I$ 
corresponding to
$\bar a_{i_0}$ and $\bar a_{j+1}$:
\[ (H,N_G(H)/H\actsfromleft K,(a_1,\dots,a_{n-d}))=
\sum_{(I,C_I)}\Theta_1+\Theta_2. \]
For instance, the contribution of $\Theta_1$ is
\begin{align}
&\sum_{(I,C_I)}\Theta_1= \nonumber \\
&\,\,\sum_{(I,C_I)}(H_I,N_G(H_I)/H_I\actsfromleft K_I,
\beta'_I) \label{term1a} \\
&\,\,+
\sum_{(I,C_I)}(H_I,N_G(H_I)/H_I\actsfromleft K_I,
\beta''_I), \label{term1b}
\end{align}
where
\begin{align*}
\beta'_I&=
(\bar a_{i_0},\bar a_{i'_1}-\bar a_{i_0},\dots,
\bar a_{i'_{j-c-1}}-\bar a_{i_0},\bar a_{j+1}-\bar a_{i_0},
\bar a_{j+2},\dots,\bar a_{n-d}), \\
\beta''_I&=
(\bar a_{j+1},\bar a_{i'_1}-\bar a_{i_0},\dots,
\bar a_{i'_{j-c-1}}-\bar a_{i_0},\bar a_{i_0}-\bar a_{j+1},
\bar a_{j+2},\dots,\bar a_{n-d}).
\end{align*}

Now there are two cases.
First we treat the case $a_{j+1}\notin\{a_1,\dots,a_j\}$.
Then the induction hypothesis identifies
\begin{equation}
\label{eqn.justjplus1}
(H,N_G(H)/H\actsfromleft K,(a_1-a_{j+1},\dots,a_j-a_{j+1},a_{j+1},
a_{j+2},\dots,a_{n-d}))
\end{equation}
with \eqref{term1b}.
The sum of \eqref{term1a}, \eqref{eqn.justjplus1}, and
\[ \sum_{(I,C_I)} \Theta_2 \]
is equal to the required sum of triples over
$(I,C_I)$, $I\subseteq \{1,\dots,j+1\}$.
In the remaining case $a_{j+1}\in\{a_1,\dots,a_j\}$,
we use \textbf{(B1)} to see that \eqref{term1b} vanishes.
What remains gives the required sum of triples.
\end{proof}

\section{Equivariant birational invariants}
\label{sec.ebi}
Our first task is to show that the class introduced in
Definition \ref{defn.classXG}
is an equivariant birational invariant, i.e., 
$$
[X \actsfromright G] \in \Burn_n(G).
$$
Then we extend the definition to include
the case of quasiprojective $G$-varieties.
Finally, we extend the definitions to remove the dependence on
Assumption \ref{assu.strong}.

\begin{theo}
\label{thm.ebi}
Let $X$ and $X'$ be smooth projective varieties of dimension $n$
over $k$, each with a generically free action of a finite group $G$, satisfying
Assumption \ref{assu.strong}.
If $X$ and $X'$ are $G$-equivariantly birationally equivalent, then 
\[ [X \actsfromright G]=[X' \actsfromright G] \]
in $\Burn_n(G)$.
\end{theo}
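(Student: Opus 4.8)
The plan is to combine equivariant weak factorization with the blow-up relations already imposed on $\Burn_n(G)$. By Proposition~\ref{prop.equivweakfact}, a $G$-equivariant birational map between $X$ and $X'$ factors as a chain of equivariant smooth blow-ups and their inverses through smooth projective varieties all satisfying Assumption~\ref{assu.strong}, so that each intermediate class is defined by Definition~\ref{defn.classXG}. As the asserted identity is an equivalence relation, it suffices to prove $[\tilde X\actsfromright G]=[X\actsfromright G]$ for a single equivariant smooth blow-up $\pi\colon\tilde X\to X$ with smooth $G$-invariant center $Z$ and exceptional divisor $E=\pi^{-1}(Z)$.

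First I would isolate the discrepancy. Since $\pi$ is an isomorphism over $X\setminus Z$, every fixed stratum $Y$ of $X$ with $Y\not\subseteq Z$ has a proper transform of the same generic stabilizer, function field, and normal representation, hence an identical symbol $s_Y$; and every fixed stratum of $\tilde X$ not lying in $E$ arises this way. Writing $s_\Sigma$ for the symbol of a fixed stratum $\Sigma$ of $\tilde X$ lying in $E$, one finds $[\tilde X\actsfromright G]-[X\actsfromright G]=\sum_{\Sigma\subseteq E}s_\Sigma-\sum_{Y\subseteq Z}s_Y$. Because the stabilizer of a point of $E$ is contained in that of its image in $Z$, every such $\Sigma$ dominates the stratum of $Z$ beneath it; stratifying $Z$ by stabilizer type, the whole difference therefore localizes at the generic points of the strata of $Z$.

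Next I would carry out the local computation over the generic point of each such stratum $W\subseteq Z$, with generic stabilizer $H$. An equivariant formal (or \'etale) slice presents $X$ locally as $W\times\A^m$, with $m=\codim_X Z$ and $H$ acting linearly on $\A^m$ with weights $b_1,\dots,b_m$; the blow-up becomes $W\times\bP^{m-1}$, and the exceptional strata over $W$ are the fixed loci, for $H$ and its subgroups, of the projectivized weight subspaces. Here the nonzero weights are those transverse to the ambient fixed locus, while a zero weight occurs precisely when that fixed locus extends beyond $Z$. When $\overline W$ is itself a fixed stratum $Y\subseteq Z$---so that all $b_i$ are nonzero---I would match $\sum_{\Sigma/W}s_\Sigma=s_Y$ term by term with Proposition~\ref{prop.eBg}(ii) for $j=m$, the index set and coset $(I,C_I)$ recording which homogeneous coordinates are nonzero and their common character class, the new strata furnishing $K_I=\overline K$ and $\beta_I$ through Construction~\textbf{(A)}. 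When instead the ambient fixed locus protrudes from $Z$ (some $b_i=0$), no term $s_Y$ is subtracted, and I would show $\sum_{\Sigma/W}s_\Sigma=0$: the exceptional and proper-transform directions contribute a canceling pair of weights $(b,-b)$, so each such $s_\Sigma$ vanishes by relation~\textbf{(B1)}, the general mixed case following from \textbf{(B1)} and \textbf{(B2)}.

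The heart of the argument, and the step I expect to be the main obstacle, is this local identification at the level of the full symbol. One must verify that the function field of each exceptional stratum, with its $N_G$-action and Galois-algebra structure over the quotient, agrees with $\overline K=\Ind_{\overline{H}'}^{N_G(\overline H)}K'(t_1,\dots,t_c)$ from Construction~\textbf{(A)}, that its normal representation reproduces $\beta_I$, and that the orbit-and-induction bookkeeping is consistent; Assumption~\ref{assu.strong} is what keeps all symbols well defined and, via Remark~\ref{rem.surjequiviso}(iii), makes the cocycles of $\mathcal N_{Z/X}$ restrict so that the characters feeding Construction~\textbf{(A)} are exactly the weights of the exceptional directions. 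The second delicate point is the systematic treatment of the trivial-weight (stratum-tangent) directions, whose interaction with the projectivization produces the canceling pairs above. Once this dictionary is established, each local difference vanishes by Proposition~\ref{prop.eBg} and relation~\textbf{(B1)}, and summing over the finitely many strata of $Z$ gives $[\tilde X\actsfromright G]=[X\actsfromright G]$.
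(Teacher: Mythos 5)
Your proposal follows the same route as the paper: reduce via Proposition \ref{prop.equivweakfact} to a single equivariant smooth blow-up, match the strata away from the center and the exceptional divisor, and then handle the strata inside the center by Proposition \ref{prop.eBg}(ii) when the ambient fixed stratum is contained in the center, and by \textbf{(B1)} when it protrudes. The local slice analysis and the identification with Construction \textbf{(A)} that you flag as the main obstacles are exactly the content the paper delegates to Proposition \ref{prop.eBg}(ii), so the argument is correct and essentially identical in structure.
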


\begin{proof}
By Proposition \ref{prop.equivweakfact}, it suffices to treat the case that
$X'$ is obtained from $X$ by an equivariant smooth blow-up.
So, let $W$ be a $G$-invariant smooth subscheme of $X$, of
pure dimension $\dim(W)\le n-2$.
Now we split $[X \actsfromright G]$ into two sums:
\begin{align*}
&\sum_{i=1}^r
\sum_{\substack{Y\not\subset W\\ \text{generic stabilizer $H_i$}}}
(H_i,N_G(H_i)/H_i \actsfromleft k(Y),\beta_Y(X)) \\
&\qquad+\sum_{i=1}^r
\sum_{\substack{Y\subset W\\ \text{generic stabilizer $H_i$}}}
(H_i,N_G(H_i)/H_i \actsfromleft k(Y),\beta_Y(X)).
\end{align*}
Letting $E$ denote the exceptional divisor of the blow-up, we similarly split
$[X' \actsfromright G]$ into two sums:
\begin{align*}
&\sum_{i'=1}^r
\sum_{\substack{Y'\not\subset E\\ \text{generic stabilizer $H_{i'}$}}}
(H_{i'},N_G(H_{i'})/H_{i'} \actsfromleft k(Y'),\beta_{Y'}(X')) \\
&\qquad+\sum_{i'=1}^r
\sum_{\substack{Y'\subset E\\ \text{generic stabilizer $H_{i'}$}}}
(H_{i'},N_G(H_{i'})/H_{i'} \actsfromleft k(Y'),\beta_{Y'}(X')).
\end{align*}
The two first sums are equal.
It remains to verify that the two second sums are equal
in $\Burn_n(G)$.
We consider some $Y'\subset E$ in the second sum for
$[X' \actsfromright G]$.
The $G$-orbit of its image in $X$ determines, by the generic stabilizer,
a conjugacy class representative $H_i$, a union of components
$Z$ with generic stabilizer $H_i$, and $Y$ containing $Z$,
appearing in the first or second sum for $[X \actsfromright G]$.
Then
\[ j:=n-\dim(Y)-\dim(W)+\dim(Z) \]
is positive.
Let $\codim(Y)$ be the codimension of $Y$ in $X$;
then $\codim(Y)=n-\dim(Z)$ if and only if $Y\subset W$.
In this case, the corresponding summand from the second sum for
$[X \actsfromright G]$ is equal in $\Burn_n(G)$ to the corresponding terms
from the second sum for $[X' \actsfromright G]$,
by Proposition \ref{prop.eBg} (ii).
Otherwise, we have $\codim(Y)<n-\dim(Z)$,
and we have the vanishing of the terms of the
second sum for $[X' \actsfromright G]$ by \textbf{(B1)}.
\end{proof}

We proceed to define $[U \actsfromright G]\in \Burn_n(G)$ for a smooth quasiprojective
variety $U$ with a faithful action of $G$, satisfying Assumption \ref{assu.strong}.
Imitating
\eqref{eqn.classXG} in the most naive way, we define
\begin{equation}
\label{eqn.naive}
[U \actsfromright G]^{\mathrm{naive}}:=
\sum_{i=1}^r \sum_{\substack{\text{$V\subset U$ with}\\ \text{generic stabilizer $H_i$}}}
(H_i,N_G(H_i)/H_i \actsfromleft k(V),\beta_V(U)).
\end{equation}
This suffers from an important drawback:
A key property of classes in the non-equivariant Burnside group,
which fails for $[U\actsfromright \{1\}]^{\mathrm{naive}}$,
is that when $U\subset X$ is open,
$[X]-[U]\in \Burn_n$ carries information about the boundary $X\setminus U$,
e.g., it is essentially what we get by application of the specialization map
on Burnside groups \cite{kontsevichtschinkel} when
$X$ fibers over a smooth curve $C$ and $U$ is the
pre-image of $C\setminus \{c\}$ for $c\in C(k)$.
However, the following is an immediate consequence of the definition
of $[U \actsfromright G]^{\mathrm{naive}}$.

\begin{lemm}
\label{lem.easy}
Let $U'\subset U$ be a $G$-invariant open subvariety, with the property
that every $V\subset U$ in \eqref{eqn.naive}
has nontrivial intersection with $U'$.
Then in $\Burn_n(G)$ we have
\[ [U \actsfromright G]^{\mathrm{naive}}
=[U' \actsfromright G]^{\mathrm{naive}}. \]
\end{lemm}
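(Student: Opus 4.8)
The plan is to show that the two sums in \eqref{eqn.naive}, for $U$ and for $U'$, agree \emph{term by term} under a natural bijection of their index sets, so that the asserted identity already holds in the free $\Z$-module on symbols, a fortiori in $\Burn_n(G)$. The guiding observation is that every ingredient of a symbol $(H_i,N_G(H_i)/H_i \actsfromleft k(V),\beta_V(U))$ is \emph{generic} data along the stratum $V$: the generic stabilizer $H_i$, the Galois algebra $k(V)$ with its $N_G(H_i)/H_i$-action, and the generic normal-bundle representation $\beta_V(U)$. Since $U'\subseteq U$ is an open immersion, none of this data can change upon passing to $U'$, and the only real issue is the bookkeeping of which strata survive.

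First I would fix a conjugacy-class representative $H_i$ and compare the loci with generic stabilizer $H_i$. For each such stratum $V\subseteq U$ --- an $N_G(H_i)$-orbit of components of $U^{H_i}$ whose generic stabilizer is exactly $H_i$ --- I set $V':=V\cap U'$. Because $U'$ is $G$-invariant and $N_G(H_i)$ permutes the components of $V$ transitively, the hypothesis that $V$ meets $U'$ forces \emph{every} component of $V$ to meet $U'$; each such intersection is a dense open subset of the corresponding irreducible component, hence has the same generic point and the same generic stabilizer $H_i$. The assignment $V\mapsto V'$ is injective, and I would check surjectivity via the standard fact that each component of $(U')^{H_i}$ is open and dense in a unique component of $U^{H_i}$ (take closures in $U$); thus every stratum of $U'$ arises from a stratum of $U$, and, precisely by the hypothesis, no stratum of $U$ with generic stabilizer $H_i$ is lost. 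Ranging over $i\in\{1,\dots,r\}$ gives a bijection between the full index sets of the two sums.

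It then remains to check that the bijection preserves symbols. Since $V'\subseteq V$ is a dense open, $N_G(H_i)$-equivariant immersion, the restriction map of Galois algebras $k(V)\to k(V')$ is an isomorphism compatible with the $N_G(H_i)/H_i$-action, so the first two entries of the symbol agree. For the third, smoothness of $U$ (hence of $U'$ and of the fixed loci, in characteristic zero) gives $\mathcal{N}_{V'/U'}=\mathcal{N}_{V/U}|_{V'}$, so the two normal-bundle representations coincide at the common generic point and $\beta_{V'}(U')=\beta_V(U)$. Hence the summands match termwise. There is no genuine obstacle beyond care with the bookkeeping; the one point that must not be glossed over is the transitivity argument ensuring that an orbit $V$ meeting $U'$ meets it in \emph{all} of its components, since $k(V)$ is the product of the function fields of those components and the identification of Galois algebras would otherwise fail.
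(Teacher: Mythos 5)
Your proof is correct and matches the paper's treatment: the paper states this lemma as an immediate consequence of the definition of $[U\actsfromright G]^{\mathrm{naive}}$ and offers no further argument, and your term-by-term matching of strata (with the transitivity observation guaranteeing that every component of an orbit $V$ meets the invariant open $U'$, so that the Galois algebras and generic normal-bundle data are literally unchanged) is exactly the immediacy made explicit.
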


The correct definition of $[U \actsfromright G]$
(Definition \ref{defn.classUG}, below) involves an alternating sum over
boundary components, as in \cite[(1.1)]{Bbar}, and the normal bundles
\[
\pi_I\colon \mathcal{N}_{D_I/X}\to D_I,
\]
in $X$ of intersections of divisors
\[
D_I:=\bigcap_{i\in I} D_i, \qquad I\subseteq \cI:=\{1,\dots,\ell\};
\]
we agree by convention that $D_{\emptyset}=X$.

The class $[U\actsfromright G]^{\mathrm{naive}}$ is at least
an equivariant birational invariant of $U$; 
we recall (see, e.g., \cite{Bbar}), that, by definition, $U$ and $U'$ are
equivariantly birationally equivalent if there exist a
quasiprojective $G$-variety $V$ and
equivariant birational projective morphisms
$V\to U$ and $V\to U'$.

\begin{lemm}
\label{lem.Uebi}
Let $U$ and $U'$ be smooth quasiprojective varieties of dimension
$n$ over $k$, each with a generically free action of $G$, satisfying Assumption \ref{assu.strong}.
If $U$ and $U'$ are equivariantly birationally equivalent, then
$$
[U\actsfromright G]^{\mathrm{naive}}=[U'\actsfromright G]^{\mathrm{naive}}.
$$
\end{lemm}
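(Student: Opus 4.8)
The plan is to reduce the statement to the single-blow-up computation already carried out in the proof of Theorem~\ref{thm.ebi}. By the definition of equivariant birational equivalence there is a quasiprojective $G$-variety $V$ together with equivariant projective birational morphisms $V\to U$ and $V\to U'$. Applying equivariant resolution of singularities and then Proposition~\ref{prop.divisorialification} (whose blow-ups are projective, hence preserve the projective birational morphisms to $U$ and to $U'$), I may assume that $V$ is smooth and satisfies Assumption~\ref{assu.strong}. It therefore suffices to prove the following: for an equivariant projective birational morphism $f\colon V\to U$ of smooth quasiprojective $G$-varieties, both satisfying Assumption~\ref{assu.strong}, one has $[V\actsfromright G]^{\mathrm{naive}}=[U\actsfromright G]^{\mathrm{naive}}$. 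Indeed, applying this twice, to $V\to U$ and to $V\to U'$, yields the lemma.

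To prove this reduced claim, I would first decompose $f$ by equivariant weak factorization into a zigzag of equivariant smooth blow-ups and blow-downs. Since $f$ is projective and birational, the relative form of the functorial weak factorization of \cite{abramovichtemkin} applies (alternatively, one compactifies and invokes Proposition~\ref{prop.equivweakfact}); all of the intermediate smooth $G$-varieties admit equivariant morphisms to $U$, so by Remark~\ref{rem.strong}(iii) they again satisfy Assumption~\ref{assu.strong}. This reduces the claim to the case of a single equivariant smooth blow-up $X'\to X$ of codimension $\ge 2$, taken in both directions.

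For a single blow-up the computation is precisely the one in the proof of Theorem~\ref{thm.ebi}, and I would simply transcribe it for naive classes. Writing the centre as $W\subset X$ and the exceptional divisor as $E\subset X'$, I split each naive class into the strata meeting the locus where $X'\to X$ is an isomorphism and the strata contained in $W$, respectively $E$. The former contribute identically on the two sides, because an isomorphism over the common open set preserves function fields and generic normal bundle representations, hence the symbols $(H_i,N_G(H_i)/H_i\actsfromleft k(\cdot),\beta)$; the latter are matched by Proposition~\ref{prop.eBg}(ii) together with relation \textbf{(B1)}, exactly as in Theorem~\ref{thm.ebi}. The crucial point is that this entire computation is local along the centre and uses no properness of the ambient variety, so it remains valid in the quasiprojective setting.

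The main obstacle I anticipate is the invocation of weak factorization in the relative (quasiprojective) setting, since Proposition~\ref{prop.equivweakfact} is stated only for projective varieties: one must either appeal to the relative version of \cite{abramovichtemkin} or compactify $f$ and control the boundary, and in either case verify, via Remark~\ref{rem.strong}(iii), that Assumption~\ref{assu.strong} persists throughout the factorization. A related subtlety worth emphasizing is that one cannot dispose of the strata inside the centre and the exceptional divisor by merely shrinking $U$ through Lemma~\ref{lem.easy}: the naive class genuinely records those strata, and it is exactly the blow-up relations that force their contributions to agree.
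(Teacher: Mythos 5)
Your proposal is correct and follows the same route as the paper: the paper's proof simply says that, as in Theorem \ref{thm.ebi}, one reduces to a single equivariant smooth blow-up and then observes that the computation from Theorem \ref{thm.ebi} carries over without change. Your additional care about the relative/quasiprojective form of weak factorization and the persistence of Assumption \ref{assu.strong} via Remark \ref{rem.strong}(iii) is exactly the implicit content of the paper's one-line reduction, so there is nothing further to add.
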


\begin{proof}
As in the proof of Theorem \ref{thm.ebi}, it suffices to treat the
case that $U'$ is obtained from $U$ by an equivariant smooth blow-up.
The proof of Theorem \ref{thm.ebi} carries over without change
to establish the lemma.
\end{proof}

\begin{defi}
\label{defn.classUG}
Let $X$ be a smooth projective variety over $k$ with a generically free $G$-action satisfying Assumption \ref{assu.strong}. 
For smooth quasiprojective 
\[
U=X\setminus D, \qquad D=\bigcup_{i\in \cI} D_i, \qquad \cI:=\{1,\dots,\ell\},
\]
with a generically free $G$-action, and a simple normal crossing divisor $D$, with a
compatible $G$-action, where each $D_i$ is $G$-invariant, we define
\begin{equation}
\label{eqn.classUG}
[U \actsfromright G]:=
[X \actsfromright G] + \sum_{\emptyset\neq I\subseteq \cI} (-1)^{|I|}
[\mathcal{N}_{D_I/X} \actsfromright G]^{\mathrm{naive}}.
\end{equation}
\end{defi}

\begin{rema}
\label{rem.classUG}
In Definition \ref{defn.classUG}, $G$ is not allowed
to permute the divisors $D_1$, $\dots$, $D_\ell$ nontrivially.
For instance, the compactification of 
$$
U:=\A^1\times(\A^1\setminus \{0\}),
$$ 
with
action of $G:=\Z/2\Z$ by 
$$
(x,y)\mapsto (xy^{-1},y^{-1}),
$$ 
by
$\bP^2$ is not permitted; instead we may work with the compactification by
the blow-up of $\bP^2$ at the point where the line $y=0$ meets the line at
infinity.
\end{rema}

It is possible to obtain an alternative formula by recognizing the
cancellation of many terms in \eqref{eqn.classUG}.
We may work in the symbols group
\[ \Burn^0_n(G). \]
Definition \ref{defn.classXG} gives a well-defined class 
of $\Burn^0_n(G)$ attached to $X\actsfromright G$.
We may analogously view
$[U \actsfromright G]^{\mathrm{naive}}$ as an element of
$\Burn^0_n(G)$ and hence, as well, $[U \actsfromright G]$ by the
formula \eqref{eqn.classUG} (whose \emph{a priori} dependence
on the presentation of $U$ as $X\setminus D$ will be removed in
Proposition \ref{prop.classUG}, below).
Lemma \ref{lem.easy} is valid in $\Burn^0_n(G)$.
The canonical homomorphism
\[ \Burn^0_n(G)\to \Burn_n(G) \]
relates the classes in $\Burn^0_n(G)$ to the ones defined in
$\Burn_n(G)$.

\begin{defi}
\label{defn.punctured}
We adopt the notation of Definition \ref{defn.classUG}.
Recognizing, $I\subseteq \cI$, that
$\mathcal{N}_{D_I/X}$ may be identified with the
direct sum over $i\in I$ of the restrictions
$\mathcal{N}_{D_i/X}|_{D_I}$, we define the
\emph{punctured normal bundles}
\[
\mathcal{N}^\circ_{D_I/X}:=\mathcal{N}_{D_I/X}\setminus
\bigcup_{i\in \cI}
\begin{cases}
\bigoplus_{j\in I\setminus\{i\}} \mathcal{N}_{D_j/X}|_{D_I},&\text{when $i\in I$},\\
\pi_I^{-1}(D_{I\cup \{i\}}),&\text{when $i\notin I$},
\end{cases}
\]
with projections
\[
\pi^\circ_I\colon \mathcal{N}^\circ_{D_I/X}\to
D^\circ_I:=D_I\setminus \bigcup_{i\in \cI\setminus I}D_i.
\]
\end{defi}

\begin{lemm}
\label{lem.classUG}
We adopt the notation of Definitions \ref{defn.classUG} and
\ref{defn.punctured}.
Then in $\Burn^0_n(G)$, and hence in $\Burn_n(G)$, we have
\[
[U \actsfromright G]=
[U \actsfromright G]^{\mathrm{naive}}+
\sum_{\emptyset\ne I\subseteq \cI} (-1)^{|I|} [\mathcal{N}^\circ_{D_I/X} \actsfromright G]^{\mathrm{naive}}.
\]
\end{lemm}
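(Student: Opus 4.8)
The plan is to verify the identity directly in the symbols group $\Burn^0_n(G)$, from which it descends to $\Burn_n(G)$ under the canonical homomorphism. Recalling $[X\actsfromright G]=[X\actsfromright G]^{\mathrm{naive}}$ and, with the convention $D_\emptyset:=X$, that $[U\actsfromright G]^{\mathrm{naive}}=[\mathcal{N}^\circ_{D_\emptyset/X}\actsfromright G]^{\mathrm{naive}}$, subtracting the defining formula \eqref{eqn.classUG} from the asserted one reduces the statement to
\begin{align*}
&[X\actsfromright G]^{\mathrm{naive}} + \sum_{\emptyset\neq I\subseteq\cI}(-1)^{|I|}[\mathcal{N}_{D_I/X}\actsfromright G]^{\mathrm{naive}} \\
&\qquad = [U\actsfromright G]^{\mathrm{naive}} + \sum_{\emptyset\neq I\subseteq\cI}(-1)^{|I|}[\mathcal{N}^\circ_{D_I/X}\actsfromright G]^{\mathrm{naive}}.
\end{align*}
I would then prove this by expanding every naive class into explicit combinatorial pieces and comparing coefficients.

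First I would stratify each total space $\mathcal{N}_{D_I/X}$ by the simple normal crossing divisor it naturally carries, namely the union over $i\in I$ of the sub-bundles $\bigoplus_{j\in I\setminus\{i\}}\mathcal{N}_{D_j/X}|_{D_I}$ together with the preimages $\pi_I^{-1}(D_{I\cup\{i\}})$ for $i\notin I$ (precisely the loci removed in Definition \ref{defn.punctured}). Grouping the summands of $[\mathcal{N}_{D_I/X}\actsfromright G]^{\mathrm{naive}}$ by the stratum containing the generic point of each $V$, and applying Lemma \ref{lem.easy} stratum by stratum, gives a decomposition indexed by pairs $(J'',K)$ with $J''\subseteq\cI\setminus I$ (extra divisors through the base point) and $K\subseteq I$ (vanishing fibre coordinates). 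The crucial observation is that, setting $B:=I\cup J''$ and $T:=I\setminus K$, the stratum is $G$-equivariantly the locus in $\bigoplus_{i\in T}\mathcal{N}_{D_i/X}|_{D_B^\circ}$ where all coordinates are nonzero; this variety, together with its transverse normal representation recorded by the characters $(a_i)_{i\in B\setminus T}$, depends only on the pair $(B,T)$ and not on $I$ separately, by the local product structure of $X$ along $D_B^\circ$ and $G$-equivariance. Writing $\tau_{B,T}\in\Burn^0_n(G)$ for the resulting class, one finds
\[
[\mathcal{N}_{D_I/X}\actsfromright G]^{\mathrm{naive}}=\sum_{I\subseteq B\subseteq\cI}\ \sum_{T\subseteq I}\tau_{B,T},\qquad
[\mathcal{N}^\circ_{D_I/X}\actsfromright G]^{\mathrm{naive}}=\tau_{I,I},
\]
and, applied to $D_\emptyset=X$, that $[X\actsfromright G]^{\mathrm{naive}}=\sum_{B\subseteq\cI}\tau_{B,\emptyset}$.

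It then remains to compare, for each pair $T\subseteq B\subseteq\cI$, the coefficient of $\tau_{B,T}$ on the two sides (using $[P]$ for the Iverson bracket, equal to $1$ if $P$ holds and $0$ otherwise). On the left this coefficient is $[T=\emptyset]+\sum_{\emptyset\neq I,\,T\subseteq I\subseteq B}(-1)^{|I|}$, and on the right it is $[(B,T)=(\emptyset,\emptyset)]+(-1)^{|B|}[B=T\neq\emptyset]$; both evaluate to $(-1)^{|T|}[B=T]$ by the elementary identity $\sum_{I'\subseteq B\setminus T}(-1)^{|I'|}=[B=T]$, which yields the asserted equality. I expect the main obstacle to be the second paragraph: rigorously establishing that each boundary stratum of $\mathcal{N}_{D_I/X}$ — with its induced $G$-action, its function field as a Galois algebra for the relevant normalizer quotient, and its transverse normal representation — is independent of $I$ and depends only on $(B,T)$. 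This is exactly the point where the compatibility of the local product structure along $D_B^\circ$ with the $G$-action must be used with care, together with Lemma \ref{lem.easy} to pass between a stratum and its dense open part; once this is in place the remaining coefficient bookkeeping is purely combinatorial.
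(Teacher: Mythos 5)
Your proposal is correct and is essentially the paper's own argument in a different indexing: the paper's proof introduces an auxiliary group on symbols $[J,W]$, where $W$ ranges over fixed-locus components of the strata $D_M$ of $X$ and $J\subseteq M$, which corresponds to your pair $(B,T)=(M,M\setminus J)$ together with the choice of component, and the conclusion is the same alternating-sum cancellation $\sum_{T\subseteq I\subseteq B}(-1)^{|I|}=0$ for $B\ne T$. The point you flag as the main obstacle is settled in the paper by observing that each $V$ occurring in $[\mathcal{N}_{D_I/X}\actsfromright G]^{\mathrm{naive}}$ is the full preimage $\varphi_{J,M}^{-1}(W)$ of a component $W$ of the fixed locus downstairs, with conditions (that the characters $a_j$ for $j\in J$ and the characters of $\beta_W(D_M)$ remain nontrivial on the reduced stabilizer) that characterize exactly which $(J,W)$ occur independently of $I$ — this is the precise form of your requirement that the transverse representation have no invariants.
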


\begin{proof}
There is a group homomorphism to $\Burn^0_n(G)$ from
\begin{equation}
\label{eqn.ZJW}
\bigoplus_{H\in \{H_1,\dots,H_r\}}
\bigoplus_{M\subseteq \cI}
\bigoplus_{\substack{\text{$W\subset D_M$ with}\\
\text{generic stabilizer $H$}\\
\text{and $\{i\in \cI\,|\,W\subset D_i\}=M$}}}
\bigoplus_{\substack{\text{$J\subseteq M$ satisfying}\\
\text{(i)--(ii) below}}} \Z[J,W],
\end{equation}
where $A:=H^\vee$ and
$a_i\in A$ is determined by the divisor $D_i$ for $i\in M$,
so that $A$ is generated by $a_i$ for $i\in M$ and $\beta_W(D_M)$
(characters of the generic normal bundle representation along $W$ in $D_M$),
and $J\subseteq M$ in the final sum is required to satisfy:
\begin{itemize}
\item[(i)] $\{j\in M\,|\,a_j\in \langle a_i\rangle_{i\in M\setminus J}\}=M\setminus J$;
\item[(ii)] no character in $\beta_W(D_M)$ lies in $\langle a_i\rangle_{i\in M\setminus J}$.
\end{itemize}
Letting $H_J\subseteq H$ be the subgroup with
$H_J^\vee=A/\langle a_i\rangle_{i\in M\setminus J}$,
then, $[J,W]$ is mapped to the triple
\[
(H_J, N_G(H_J)/H_J\actsfromleft K_J, \beta_J),
\]
where $N_G(H_J)/H_J\actsfromleft K_J$ arises by application of
Construction \textbf{(A)} in Section \ref{sec.prelim-a}
to $N_G(H)/H\actsfromleft k(W)$ and $(a_i)_{i\in M\setminus J}$, and
\[ \beta_J=\gamma_J\oplus \beta_W(D_M)|_{H_J}, \]
with $\gamma_J$ given by $\bar a_i$ for $i\in J$.

We interpret \eqref{eqn.classUG} as taking
values in the group \eqref{eqn.ZJW} as follows.
Given $I\subseteq \cI$ and $V$ appearing in the
definition of $[\mathcal{N}_{D_I/X}\actsfromright G]^{\mathrm{naive}}$,
we associate $[J,W]$ for $W=V\cap D_I$, where with
$M=\{i\in \cI\,|\,W\subset D_i\}$ there is a unique subset $J$ such that
$I\cup J=M$ and, for the vector bundle
\[ \varphi_{J,M}\colon \bigoplus_{i\in M\setminus J} \mathcal{N}_{D_i/X}|_{D_M}\to D_M, \]
we have $V=\varphi_{J,M}^{-1}(W)$.
We observe that $[\mathcal{N}^\circ_{D_I/X}\actsfromright G]^{\mathrm{naive}}$
contributes just the terms with $J=\emptyset$.

The proposition is thus reduced to the observation that
in the group \eqref{eqn.ZJW} the sum of terms from \eqref{eqn.classUG} with
$J\ne \emptyset$ is zero.
For given $W$ (which determines $M$) and $J\ne \emptyset$,
each $I\subseteq M$ with $I\cup J=M$ appears with coefficient
$(-1)^{|I|}$.
These coefficients sum to zero.
\end{proof}

\begin{prop}
\label{prop.classUG}
Let $U\actsfromright G$ be a smooth quasiprojective variety of dimension $n$ over $k$ with 
a generically free $G$-action.  Then its associated class in $\Burn^0_n(G)$, and hence in 
$\Burn_n(G)$, is independent of the choice of presentation as $U=X\setminus D$, i.e., 
if $X'$ is another smooth projective variety 
with a generically free $G$-action, with an equivariant embedding $U\to X'$ with complement
\[
D'=\bigcup_{i\in \cI'} \, D_{i}',\qquad
\cI':=\{1,\dots,\ell'\},
\] 
a simple normal crossing
divisor, where each $D'_i$ is $G$-invariant, such that $X'$ satisfies
Assumption \ref{assu.strong}, then formula \eqref{eqn.classUG}
agrees with the analogous formula
for $X'$ and $D'$.
\end{prop}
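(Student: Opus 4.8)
The plan is to reduce, by $G$-equivariant weak factorization with boundary, to the case of a single blow-up, and then to match boundary contributions using the blow-up relations, exactly as in the proof of Theorem \ref{thm.ebi}.

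First I would reduce to comparing two models related by a morphism. Given the second model $X'$, the closure in $X\times X'$ of the graph of the identity on $U$, resolved equivariantly and brought into the form of Proposition \ref{prop.divisorialification}, produces a smooth projective $G$-variety $\widetilde X$ satisfying Assumption \ref{assu.strong}, equipped with equivariant birational morphisms to $X$ and to $X'$ that are isomorphisms over $U$ and carry $U$ to the complement of a simple normal crossing boundary with $G$-invariant components (using equivariant resolution with boundary and the variant of Remark \ref{rem.divisorialification}). It therefore suffices to prove that \eqref{eqn.classUG} is unchanged when $X$ is replaced by such a dominating model $\widetilde X\to X$. Applying the $G$-equivariant weak factorization of Proposition \ref{prop.equivweakfact} to this morphism, with the pair of boundary divisors as in Remark \ref{rem.equivweakfact}, reduces the problem to a single equivariant smooth blow-up $\pi\colon\widetilde X\to X$ along a smooth $G$-invariant center $W\subset D$ having normal crossings with $D$; all intermediate varieties satisfy Assumption \ref{assu.strong}, and, writing $W$ as a disjoint union of $G$-orbits of components and indexing the exceptional divisors accordingly (Remark \ref{rem.divisorialification}), each boundary component remains $G$-invariant, as required in Definition \ref{defn.classUG}.

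Next I would isolate the part of \eqref{eqn.classUG} that is manifestly intrinsic to $U$. By Lemma \ref{lem.classUG},
\[
[U\actsfromright G]=[U\actsfromright G]^{\mathrm{naive}}+\sum_{\emptyset\ne I\subseteq\cI}(-1)^{|I|}[\mathcal N^\circ_{D_I/X}\actsfromright G]^{\mathrm{naive}},
\]
and the term $[U\actsfromright G]^{\mathrm{naive}}$ is computed from the strata of $U$ alone, so it is literally the same for every compactification (and in any case is an equivariant birational invariant of $U$ by Lemma \ref{lem.Uebi}). Consequently the proposition reduces to the invariance of the alternating sum of punctured normal bundle classes under $\pi$. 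The same reduction is available through the original formula: since $\widetilde X$ and $X$ are smooth projective, satisfy Assumption \ref{assu.strong}, and are equivariantly birational, Theorem \ref{thm.ebi} gives $[\widetilde X\actsfromright G]=[X\actsfromright G]$, so the leading term of \eqref{eqn.classUG} is unchanged and only the boundary sum must be compared.

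Finally I would carry out the blow-up computation, which is the main obstacle. Let $M:=\{i\in\cI\mid W\subseteq D_i\}$; the boundary of $\widetilde X$ consists of the proper transforms $\widetilde D_i$, $i\in\cI$, together with the exceptional divisor $E$, indexed $\ell+1$. Strata $D^\circ_I$ disjoint from $W$, with their punctured normal bundles, are carried isomorphically by $\pi$ and contribute identically on both sides. For the strata meeting $W$ one must track how $\mathcal N^\circ_{D_I/X}$ decomposes over the part lying in $W$: the blow-up introduces one new normal direction, recorded by an additional character attached to $E$, and the generic normal bundle representation transforms exactly according to Construction \textbf{(A)} of Section \ref{sec.prelim-a} applied to that character. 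The matching of the resulting symbols with the new strata on $E$, and the vanishing of the spurious terms, is then governed by the blow-up relations in the packaged form of Proposition \ref{prop.eBg}(ii) and \textbf{(B1)}, precisely as in the proof of Theorem \ref{thm.ebi}. The delicate point is the combinatorial bookkeeping when $M\ne\emptyset$, i.e.\ when $W$ lies in several boundary divisors: one must check that the signs $(-1)^{|I|}$, the new index $\ell+1$, and the subgroup and coset data produced by Construction \textbf{(A)} combine so that the contributions over $E$ reproduce exactly the change in the alternating sum, leaving it invariant. The relations in Definition \ref{defn.eBg} are designed to make this cancellation work, and verifying it in full generality is where the real work of the proof lies.
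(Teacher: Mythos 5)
Your reduction steps are fine and match the paper: equivariant weak factorization with boundary (Remark \ref{rem.equivweakfact}) reduces to a single equivariant smooth blow-up with center $Z$ disjoint from $U$ and having normal crossings with $D$, and Lemma \ref{lem.classUG} strips off the common term $[U\actsfromright G]^{\mathrm{naive}}$, leaving an identity among the classes $[\mathcal N^\circ_{D_I/X}\actsfromright G]^{\mathrm{naive}}$, $[\mathcal N^\circ_{\widetilde D_I/\widetilde X}\actsfromright G]^{\mathrm{naive}}$ and $[\mathcal N^\circ_{\widetilde D_I\cap E/\widetilde X}\actsfromright G]^{\mathrm{naive}}$ to be verified. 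But your proposed mechanism for that verification is wrong, in two ways. First, the proposition asserts well-definedness in $\Burn^0_n(G)$, the symbols group, where the blow-up relations \textbf{(B1)}, \textbf{(B2)} and Proposition \ref{prop.eBg}(ii) are simply not available; an argument ``precisely as in the proof of Theorem \ref{thm.ebi}'' can at best give the statement in the quotient $\Burn_n(G)$. For the same reason your alternative reduction via $[\widetilde X\actsfromright G]=[X\actsfromright G]$ (Theorem \ref{thm.ebi}) is not usable here: that equality holds only in $\Burn_n(G)$.

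Second, and more substantively, the remaining identity is not an instance of the blow-up relations at all: the terms being compared are naive classes of \emph{different quasiprojective total spaces} (punctured normal bundles over strata of $X$ versus over strata of $\widetilde X$), and the paper proves their equality by exhibiting actual $G$-equivariant identifications. Concretely: one rearranges to \eqref{eqn.rearrange}, shows the left side vanishes unless $\cI'=\{i:Z\subset D_i\}\subseteq I$, expresses the punctured normal bundles over the exceptional locus via the isomorphisms \eqref{eqn.N0}--\eqref{eqn.N3} and their generalizations, and then matches them using Zariski-local $G$-equivariant splittings of the normal bundle sequence \eqref{eqn.needsplit} --- with the crucial point that over the locus $Z'$ where the relevant characters do not occur, the splitting exists globally and is unique by Schur's lemma, which is what forces all contributing strata $W$ to be consistently identified. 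Lemma \ref{lem.locallyisomorphic} then converts these local identifications into equalities of naive classes in $\Burn^0_n(G)$. Your write-up acknowledges that ``verifying it in full generality is where the real work of the proof lies'' --- that work is exactly what is missing, and the tool you name for it (the relations of Definition \ref{defn.eBg}) is not the one that does the job.
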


The proof of Proposition \ref{prop.classUG}, along with the
birational invariance of $[U\actsfromright G]$ stated below
as Proposition \ref{prop.classUebi}, will use
Lemmas \ref{lem.Uebi} and \ref{lem.classUG}, along with
a careful cancellation argument much like that used in the proof
of Lemma \ref{lem.classUG}.
Because of the intricate combinatorics, we present first an
example in a simple setting to illustrate the cancellation scheme.

\begin{exam}
\label{exa.classUG}
Let $X$ be a smooth projective variety of dimension $n$ with
a generically free $G$-action, satisfying Assumption \ref{assu.strong}.
Let $U=X\setminus D$, where $D$ is a smooth invariant divisor,
let $d\ge 2$, and let $Z$ be a smooth purely $(n-d)$-dimensional subscheme
of $D$:
\[ Z\subset D\subset X. \]
Let $\widetilde{X}$ be the blow-up of $Z$ in $X$ with
exceptional divisor $E$ and proper transform $\widetilde{D}$ of $D$.
Then an instance of Proposition \ref{prop.classUG} is the following equality:
\begin{align*}
&[X\actsfromright G]-[\mathcal{N}_{D/X}\actsfromright G]^{\mathrm{naive}}= \\
&\,\,\,\,\,\,[\widetilde{X}\actsfromright G]-[\mathcal{N}_{\widetilde{D}/\widetilde{X}}\actsfromright G]^{\mathrm{naive}}
-[\mathcal{N}_{E/\widetilde{X}}\actsfromright G]^{\mathrm{naive}}
+[\mathcal{N}_{\widetilde{D}\cap E/\widetilde{X}}\actsfromright G]^{\mathrm{naive}}.
\end{align*}
As a first step, we apply Lemma \ref{lem.classUG} to both sides.
Then the terms $[U\actsfromright G]^{\mathrm{naive}}$ on each side cancel,
leaving us to verify
\begin{align}
\begin{split}
\label{eqn.exatoverify}
&[\mathcal{N}^\circ_{D/X}\actsfromright G]^{\mathrm{naive}}
-[\mathcal{N}^\circ_{\widetilde{D}/\widetilde{X}}\actsfromright G]^{\mathrm{naive}}= \\
&\qquad\qquad [\mathcal{N}^\circ_{E/\widetilde{X}}\actsfromright G]^{\mathrm{naive}}
-[\mathcal{N}^\circ_{\widetilde{D}\cap E/\widetilde{X}}\actsfromright G]^{\mathrm{naive}}.
\end{split}
\end{align}
The left-hand side of \eqref{eqn.exatoverify} is just the contribution to
$[\mathcal{N}^\circ_{D/X}\actsfromright G]^{\mathrm{naive}}$
from loci $V$ with various generic stabilizer groups that are contained in
the pre-image under $\mathcal{N}^\circ_{D/X}\to D$ of $Z$.
In the spirit of the proof of Lemma \ref{lem.classUG}, these may be
labeled by $W\subset Z$ with some
generic stabilizer $H\in \{H_1,\dots,H_r\}$, where the divisor $D$ determines
an element $a\in A:=H^\vee$, such that
no character of $\beta_W(D)$ lies in $\langle a\rangle$.
The left-hand side of \eqref{eqn.exatoverify} is a sum over
such $W$ of elements of $\Burn_n(G)$
arising by Construction \textbf{(A)} in Section \ref{sec.prelim-a} from
$N_G(H)/H\actsfromleft k(W)$ and $a$, with the
representation obtained by restriction from $\beta_W(D)$.
On the right-hand side of \eqref{eqn.exatoverify}, we have
\begin{equation}
\label{eqn.N0}
\mathcal{N}^\circ_{E/\widetilde{X}}\cong \mathcal{N}_{Z/X}\setminus \mathcal{N}_{Z/D},
\end{equation}
while $\mathcal{N}_{\widetilde{D}\cap E/\widetilde{X}}$
is a direct sum of the line bundles
\begin{equation}
\label{eqn.N1}
\mathcal{N}_{\widetilde{D}\cap E/\widetilde{D}}\cong \cO_{\PP(\mathcal{N}_{Z/D})}(-1)
\end{equation}
and
\begin{equation}
\label{eqn.N2}
\mathcal{N}_{\widetilde{D}\cap E/E}\cong \mathcal{N}_{D/X}|_{\PP(\mathcal{N}_{Z/D})}\otimes
\cO_{\PP(\mathcal{N}_{Z/D})}(1).
\end{equation}
Here, for \eqref{eqn.N1}--\eqref{eqn.N2} we use
$E\cong \PP(\mathcal{N}_{Z/X})$ and
$\widetilde{D}\cap E\cong \PP(\mathcal{N}_{Z/D})$.
So,
\begin{equation}
\label{eqn.N3}
\mathcal{N}^\circ_{\widetilde{D}\cap E/\widetilde{X}}\cong
(\mathcal{N}_{Z/D}\setminus Z) \times_D (\mathcal{N}_{D/X}\setminus D).
\end{equation}
We relate the two punctured normal bundles on the right-hand side of
\eqref{eqn.exatoverify} by
choosing, Zariski locally on $Z$, a $G$-equivariant splitting of the
short exact sequence of vector bundles
\begin{equation}
\begin{split}
\label{eqn.splitting}
\xymatrix{
0 \ar[r] &
\mathcal{N}_{Z/D} \ar[r] &
\mathcal{N}_{Z/X} \ar[r] &
\mathcal{N}_{D/X}|_Z \ar[r] \ar@/_12pt/[l]_s &
0.
}
\end{split}
\end{equation}
This is possible, since
splittings exist Zariski locally and
may be averaged with their translates to yield equivariant splittings,
or in fancier language, since
the invariant local section functor on coherent sheaves is exact
\cite[Lemma 2.3.4]{abramovichvistoli}.
Using \eqref{eqn.N0} and \eqref{eqn.N3},
we obtain from \eqref{eqn.splitting} isomorphisms
\[
\mathcal{N}^\circ_{E/\widetilde{X}}\setminus s(\mathcal{N}_{D/X}|_Z)\cong
\mathcal{N}^\circ_{\widetilde{D}\cap E/\widetilde{X}}.
\]
over $G$-invariant Zariski open subsets of $Z$.
A splitting does not necessarily exist globally
but does exist and is unique upon restricting to the locus
$Z'$ where no character of $\beta_Z(D)$ is equal to $a$,
by Schur's lemma.
This observation, together with the fact that every $W$ as above is
contained in $Z'$, lets us identify
(see Lemma \ref{lem.locallyisomorphic})
the right-hand side of \eqref{eqn.exatoverify} with a sum over
$W\subset Z$ as before
and thereby establish \eqref{eqn.exatoverify}.
\end{exam}

\begin{lemm}
\label{lem.locallyisomorphic}
Let $d\le n$, let $Z$ be a smooth purely $(n-d)$-dimensional quasiprojective $G$-scheme
over $k$, and let
$U$ and $U'$ be quasiprojective purely $n$-dimensional schemes over $k$, with
generically free $G$-actions and $G$-equivariant smooth morphisms
\[ U\to Z\qquad\text{and}\qquad U'\to Z. \]
Suppose that for every $z\in Z$ there exist a $G$-invariant Zariski
neighborhood $Y\subset Z$ of $z$ and a $G$-equivariant open immersion
\[ U'\times_ZY\to U\times_ZY \]
which commutes with the projection maps to $Y$
and satisfies the condition in Lemma \ref{lem.easy}.
Then in $\Burn^0_n(G)$ we have
\[ [U \actsfromright G]^{\mathrm{naive}}=[U' \actsfromright G]^{\mathrm{naive}}. \]
\end{lemm}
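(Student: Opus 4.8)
The plan is to reduce the global equality to a family of local equalities over $Z$, each furnished by Lemma \ref{lem.easy}, and then to recombine them by inclusion--exclusion over a finite $G$-invariant open cover of $Z$. The essential difficulty is that the local open immersions provided by the hypothesis need not glue to a single open immersion $U'\to U$ over $Z$ (this is exactly the situation in Example \ref{exa.classUG}, where the splitting of \eqref{eqn.splitting} is only local), so a direct patching is unavailable; the inclusion--exclusion bookkeeping is what circumvents this. Write $\pi\colon U\to Z$ and $\pi'\colon U'\to Z$ for the structure morphisms.

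First I would extract a finite cover. The $G$-invariant neighborhoods supplied by the hypothesis form a $G$-invariant open cover of $Z$; since $Z$ is Noetherian, finitely many of them, say $Y_1,\dots,Y_N$, already cover $Z$, each carrying a $G$-equivariant open immersion $\iota_i\colon U'\times_Z Y_i\to U\times_Z Y_i$ over $Y_i$ that satisfies the condition of Lemma \ref{lem.easy}. For nonempty $I\subseteq\{1,\dots,N\}$ set $Y_I:=\bigcap_{i\in I}Y_i$.

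The key local step is the claim that $[U\times_Z Y_I\actsfromright G]^{\mathrm{naive}}=[U'\times_Z Y_I\actsfromright G]^{\mathrm{naive}}$ for every nonempty $I$. Choosing any $i\in I$, I would restrict $\iota_i$ to $Y_I$, obtaining again a $G$-equivariant open immersion over $Y_I$; the only thing needing verification is that it still satisfies the hypothesis of Lemma \ref{lem.easy}, i.e.\ that every stratum $W$ of $U\times_Z Y_I$ meets its image. Indeed $W$ is the restriction to $U\times_Z Y_I$ of a stratum $\widetilde{W}$ of $U\times_Z Y_i$ sharing the same generic points; those generic points lie in $\iota_i(U'\times_Z Y_i)$ by the condition over $Y_i$, and they lie in $U\times_Z Y_I$ by construction, hence in $\iota_i(U'\times_Z Y_I)$. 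Lemma \ref{lem.easy} then gives the equality, since $\iota_i$ is a $G$-equivariant isomorphism onto its image and so preserves the naive class.

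Finally I would assemble the global identity. For a stratum $V$ of $U$ put $I(V):=\{i\mid V\text{ meets }\pi^{-1}(Y_i)\}$, which is nonempty because the $Y_i$ cover $Z$; as each $Y_i$ is $G$-invariant, hence $N_G(H)$-invariant, and $V$ is an $N_G(H)$-orbit of components, $V$ meets $\pi^{-1}(Y_i)$ exactly when the generic points of its components map into $Y_i$. Consequently $V$ contributes its symbol to $[U\times_Z Y_I\actsfromright G]^{\mathrm{naive}}$ precisely when $I\subseteq I(V)$, and then with the same symbol as in $[U\actsfromright G]^{\mathrm{naive}}$. Using $\sum_{\emptyset\neq I\subseteq I(V)}(-1)^{|I|+1}=1$ for nonempty $I(V)$, inclusion--exclusion yields
\[
[U\actsfromright G]^{\mathrm{naive}}=\sum_{\emptyset\neq I}(-1)^{|I|+1}[U\times_Z Y_I\actsfromright G]^{\mathrm{naive}},
\]
and the identical formula holds for $U'$. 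Combining with the local equalities of the previous paragraph gives $[U\actsfromright G]^{\mathrm{naive}}=[U'\actsfromright G]^{\mathrm{naive}}$ in $\Burn^0_n(G)$. I expect the main obstacle to be precisely the key local step---verifying that restricting an immersion to a smaller base preserves the Lemma \ref{lem.easy} condition---together with the care required because each stratum is an $N_G(H)$-orbit of components rather than a single irreducible subvariety.
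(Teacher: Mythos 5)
Your proof is correct, but the way you assemble the local data is genuinely different from the paper's. The paper also begins by extracting a finite $G$-invariant cover $Y_1,\dots,Y_m$, but then constructs a direct matching of strata: to each $V'\subset U'$ it associates the \emph{minimal} $j$ such that the image of $V'$ in $Z$ meets $Y_j$, transports $V'$ via the chosen immersion over $Y_j$ to a stratum $V\subset U$, and invokes the hypothesis to see that every stratum of $U$ is accounted for; the equality of naive classes then follows because the matching preserves symbols. You instead never build a bijection: you prove the local equalities $[U\times_Z Y_I\actsfromright G]^{\mathrm{naive}}=[U'\times_Z Y_I\actsfromright G]^{\mathrm{naive}}$ over all intersections $Y_I$ of the cover and recombine them by inclusion--exclusion, using that a stratum $V$ contributes to the term indexed by $I$ exactly when $I\subseteq I(V)$ and that $\sum_{\emptyset\ne I\subseteq I(V)}(-1)^{|I|+1}=1$. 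The paper's route is shorter; yours trades the (somewhat implicit) verification that the minimal-index matching is a well-defined, symbol-preserving bijection for a purely formal alternating-sum identity, at the price of the extra local step you correctly single out as the crux --- that the Lemma \ref{lem.easy} condition survives restriction from $Y_i$ to $Y_I$ --- and your treatment of that step (generic points of an irreducible component lie in every nonempty open subset; the $Y_i$ are $G$-invariant, so an $N_G(H)$-orbit of components meets $\pi^{-1}(Y_i)$ iff all its components do; $\iota_i(U'\times_Z Y_I)=\iota_i(U'\times_Z Y_i)\cap(U\times_Z Y_I)$ since $\iota_i$ commutes with the projection) is sound. Both arguments ultimately rest on the same observation that a symbol depends only on the generic points of a stratum and hence is unchanged by passage to a dense $G$-invariant open subset.
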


\begin{proof}
There exists a finite collection $Y_1$, $\dots$, $Y_m$ of
$G$-invariant Zariski neighborhoods as in the statement of the lemma,
whose union is $Z$.
Over each $Y_j$ we choose an open immersion as in the statement.
In the definition of $[U' \actsfromright G]^{\mathrm{naive}}$, to each
$1\le i\le r$ and $V'\subset U'$ with generic stabilizer $H_i$ we
associate $j\in \{1,\dots,m\}$, taken to be minimal with the
property that the image of $V'$ in $Z$ has nontrivial intersection with
$Y_j$.
Then we use the chosen $G$-equivariant open immersion over $Y_j$
to identify a corresponding $V\subset U$ with
generic stabilizer $H_i$, appearing in the definition of
$[U \actsfromright G]^{\mathrm{naive}}$.
The hypotheses guarantee that every $V\subset U$ appearing in the
definition of $[U \actsfromright G]^{\mathrm{naive}}$ is accounted for.
\end{proof}

\begin{proof}[Proof of Proposition \ref{prop.classUG}]
By equivariant weak factorization (see Remark \ref{rem.equivweakfact})
it suffices to compare the presentations of $U$ as $X\setminus D$ and
$\widetilde{X}\setminus \widetilde{D}$, where
$\widetilde{X}$ is obtained from $X$ by equivariant smooth blow-up, with a
center of blow-up $Z$ that is disjoint from $U$ and has
normal crossing with the divisors $D_i$, and
\[ \widetilde{D}=\widetilde{D}_1\cup\dots\cup \widetilde{D}_\ell\cup E, \]
where $\widetilde{D}_i$ denotes the proper transform of $D_i$
for $i=1$, $\dots$, $\ell$, and $E$, the exceptional divisor.

We introduce the notation
\[ \cI':=\{i\in \cI\,|\,Z\subset D_i\}\qquad\text{and}\qquad
\cI'':=\{i\in \cI\,|\,Z\not\subset D_i\} \]
and, for $I\subseteq \cI$,
\[ I':=\cI'\cap I\qquad\text{and}\qquad I'':=\cI''\cap I. \]
Additionally, we define
\[ Z_I:=D^\circ_{\cI'\cup I}\cap Z. \]

Application of Lemma \ref{lem.classUG} to the
expressions for $[U \actsfromright G]$ from
$U\subset X$ and $U\subset X'$ reduces the proposition to the verification of
\begin{align*}
\sum_{\emptyset\ne I\subseteq \cI}(-1)^{|I|}&
\big([\mathcal{N}^\circ_{D_I/X}\actsfromright G]^{\mathrm{naive}}
-[\mathcal{N}^\circ_{\widetilde{D}_I/\widetilde{X}}\actsfromright G]^{\mathrm{naive}}\big)=\\
&\qquad\sum_{I\subseteq \cI}(-1)^{|I|+1}
[\mathcal{N}^\circ_{\widetilde{D}_I\cap E/\widetilde{X}}\actsfromright G]^{\mathrm{naive}}.
\end{align*}
This is equivalent to
\begin{align}
\begin{split}
\label{eqn.rearrange}
&\sum_{\emptyset\ne I\subseteq \cI}(-1)^{|I|}
\big([\mathcal{N}^\circ_{D_I/X}\actsfromright G]^{\mathrm{naive}}
-[\mathcal{N}^\circ_{\widetilde{D}_I/\widetilde{X}}\actsfromright G]^{\mathrm{naive}}\big)=\\
&\qquad\sum_{\emptyset\ne I\subseteq \cI}(-1)^{|I|}
\big([\mathcal{N}^\circ_{\widetilde{D}_{I''}\cap E/\widetilde{X}}\actsfromright G]^{\mathrm{naive}}
-[\mathcal{N}^\circ_{\widetilde{D}_I\cap E/\widetilde{X}}\actsfromright G]^{\mathrm{naive}}\big).
\end{split}
\end{align}
Indeed, taking $i_0\in \cI'$,
the summand with $I=\{i_0\}$ has first term
$-[\mathcal{N}^\circ_{E/\widetilde{X}}\actsfromright G]^{\mathrm{naive}}$,
while the first terms from the remaining summands cancel, as we see by
pairing the terms indexed by $I$ and $I\cup \{i_0\}$ for
$\emptyset\ne I\subseteq \cI\setminus\{i_0\}$.

For any $I\subseteq \cI$ and $i\in \cI'$ we have
$D_I\setminus D_i\cong \widetilde{D}_I\setminus (\widetilde{D}_i\cup E)$.
It follows easily that the left-hand side of \eqref{eqn.rearrange} is $0$
whenever $\cI'\not\subseteq I$.
When $\cI'\subseteq I$, the
left-hand side of \eqref{eqn.rearrange} is equal to the
contribution to
$[\mathcal{N}^\circ_{D_I/X}\actsfromright G]^{\mathrm{naive}}$
from loci $V$ with various stabilizer groups, contained in
\[ (\pi^\circ_I)^{-1}(Z_I). \]
Any such $V$ is equal to $(\pi^\circ_I)^{-1}(W)$, where $W$ is the
image of $V$ in $Z_I$.
Let $H$ be the generic stabilizer of
(a union of components of) $W$.
Then the contribution, specifically, is gotten by pairing the outcome of
Construction \textbf{(A)} in Section \ref{sec.prelim-a}, applied to
$N_G(H)/H\actsfromleft k(W)$ and
$(a_i)_{i\in I}$, with the characters of
$\beta_W(D^\circ_I)$.
We have
\[ W\subset Z', \]
where
\[ Z'\subset Z_I \]
is defined to be the locus where, for all $i\in \cI'$,
the character $a_i$ in the corresponding character group
does not appear in $\beta_{Z_I}(D^\circ_I)$.

We now turn to an analysis of the right-hand side of \eqref{eqn.rearrange}.
Since $Z$ meets $D_{I''}$ transversally we have
\[ \mathcal{N}_{E/\widetilde{X}}|_{\widetilde D_{I''}\cap E}
\cong \mathcal{N}_{\widetilde D_{I''}\cap E/\widetilde D_{I''}}, \]
hence as well
\[ \mathcal{N}_{\widetilde{D}_{I''}\cap E/\widetilde{X}}\cong
\mathcal{N}_{\widetilde D_{I''}\cap E/\widetilde D_{I''}}
\oplus \Big(\bigoplus_{j\in I''}\mathcal{N}_{D_j/X}|_{\widetilde D_{I''}\cap E}\Big).
\]
The complement of the zero-section in the line bundle
$\mathcal{N}_{\widetilde D_{I''}\cap E/\widetilde D_{I''}}$ may be
identified with the complement of the zero-section in the vector bundle
$\mathcal{N}_{D_I\cap Z/D_{I''}}$.
To obtain $\mathcal{N}^\circ_{\widetilde{D}_{I''}\cap E/\widetilde{X}}$,
we remove the exceptional divisors of the blow-up of
$D_{I''\cup \{i\}}$ along $D_I\cap Z$ for every $i\in \cI'$
as well as everything over
divisors indexed by $\cI''\setminus I$:
\[
\mathcal{N}^\circ_{\widetilde{D}_{I''}\cap E/\widetilde{X}}\cong
\Big(\mathcal{N}_{Z_I/D_{I''}}
\setminus
\bigcup_{i\in \cI'}\mathcal{N}_{Z_I/D_{I''\cup\{i\}}}\Big)\times_{Z_I}
\Big(
\mathop{\raisebox{-.5ex}{\hbox{\huge{$\times$}}}}\limits_{j\in I''}
(\mathcal{N}_{D_j/X}|_{Z_I}\setminus Z_I)
\Big).
\]
Analogously, in the respective cases $\cI'\subseteq I$ and
$\cI'\not\subseteq I$ we have
\[
\mathcal{N}^\circ_{\widetilde{D}_I\cap E/\widetilde{X}}\cong
\Big(\mathcal{N}_{Z_I/D_I}
\setminus Z_I\Big)\times_{Z_I}
\Big(
\mathop{\raisebox{-.5ex}{\hbox{\huge{$\times$}}}}\limits_{j\in I}
(\mathcal{N}_{D_j/X}|_{Z_I}\setminus Z_I)
\Big),
\]
respectively
\[
\mathcal{N}^\circ_{\widetilde{D}_I\cap E/\widetilde{X}}\cong
\Big(\mathcal{N}_{Z_I/D_I}
\setminus \bigcup_{i\in\cI'\setminus I}\mathcal{N}_{Z_I/D_{I\cup\{i\}}}\Big)\times_{Z_I}
\Big(
\mathop{\raisebox{-.5ex}{\hbox{\huge{$\times$}}}}\limits_{j\in I}
(\mathcal{N}_{D_j/X}|_{Z_I}\setminus Z_I)
\Big).
\]

As explained in Example \ref{exa.classUG},
Zariski locally over $Z_I$ there exist equivariant splittings of the
short exact sequence
\begin{equation}
\label{eqn.needsplit}
0\to \mathcal{N}_{Z_I/D_{\cI'\cup I}}\to \mathcal{N}_{Z_I/D_{I''}}\to
\bigoplus_{j\in \cI'}\mathcal{N}_{D_j/X}|_{Z_I}\to 0.
\end{equation}
First we treat the case $\cI'\subseteq I$.
Zariski locally over $Z_I$ we use splittings of \eqref{eqn.needsplit}
to identify
\[
\mathcal{N}_{Z_I/D_I}\oplus \big(\bigoplus_{j\in \cI'}\mathcal{N}_{D_j/X}|_{Z_I}\big)
\]
with $\mathcal{N}_{Z_I/D_{I''}}$.
We have also defined $Z'\subset Z_I$, closed, over which a splitting $s$ of
\eqref{eqn.needsplit} exists and is unique.
So
\begin{align*}
U:=
\Big(\mathcal{N}_{Z_I/D_{I''}}\setminus
\Big(\bigcup_{i\in \cI'}\mathcal{N}_{Z_I/D_{I''\cup\{i\}}}\cup
s&\big(\bigoplus_{i\in \cI'}\mathcal{N}_{D_i/X}|_{Z'} \big)\Big)\Big)\times_{Z_I}\\
&\Big(\mathop{\raisebox{-.5ex}{\hbox{\huge{$\times$}}}}\limits_{j\in I''}
(\mathcal{N}_{D_j/X}|_{Z_I}\setminus Z_I)\Big)
\end{align*}
is a well-defined open subscheme of
$\mathcal{N}^\circ_{\widetilde{D}_{I''}\cap E/\widetilde{X}}$.
To $U$ we have, over $G$-invariant open subschemes of $Z_I$,
equivariant open immersions from
$\mathcal{N}^\circ_{\widetilde{D}_I\cap E/\widetilde{X}}$.
Lemma \ref{lem.locallyisomorphic} is applicable and yields
\[ [U \actsfromright G]^{\mathrm{naive}}=[\mathcal{N}^\circ_{\widetilde{D}_I\cap E/\widetilde{X}} \actsfromright G]^{\mathrm{naive}} \]
in $\Burn^0_n(G)$ (or both $U$ and $\widetilde{D}_I\cap E$ are empty, and the equality holds trivially).
On the other hand, we may write
\[ 
[\mathcal{N}^\circ_{\widetilde{D}_{I''}\cap E/\widetilde{X}} \actsfromright G]^{\mathrm{naive}}-
[U \actsfromright G]^{\mathrm{naive}}
\]
as a sum over $W\subset Z_I$ of elements of $\Burn^0_n(G)$, which is equal to the
contribution to $[\mathcal{N}^\circ_{D_I/X}\actsfromright G]^{\mathrm{naive}}$
from loci contained in $(\pi^\circ_I)^{-1}(Z_I)$.
The equality of the left- and right-hand sides of \eqref{eqn.rearrange} is
thus established for the summands with $\cI'\subseteq I$.

When $\cI'\not\subseteq I$ we take a splitting of \eqref{eqn.needsplit}
over invariant open $Y\subset Z_I$
\[ s\colon \bigoplus_{j\in \cI'}\mathcal{N}_{D_j/X}|_{Y}\to
\mathcal{N}_{Y/D_{I''}} \]
and restrict
to $\bigoplus_{j\in I'} \mathcal{N}_{D_j/X}|_{Y}$ to obtain an
isomorphism
\begin{equation}
\label{eqn.localiso}
\mathcal{N}_{Y/D_I}\oplus \Big(\bigoplus_{j\in I'} \mathcal{N}_{D_j/X}|_Y\Big)
\cong \mathcal{N}_{Y/D_{I''}}.
\end{equation}
This, we claim, induces an isomorphism of
$\mathcal{N}^\circ_{\widetilde{D}_I\cap E/\widetilde{X}}$ with
$\mathcal{N}^\circ_{\widetilde{D}_{I''}\cap E/\widetilde{X}}$
over $Y$.
Indeed, when $i\in I'$, the isomorphism \eqref{eqn.localiso} induces
\[
\mathcal{N}_{Y/D_I}\oplus
s\big(\bigoplus_{j\in I'\setminus \{i\}}\mathcal{N}_{D_j/X}|_Y\big)\cong
\mathcal{N}_{Y/D_{I''\cup\{i\}}},
\]
while for $i\in \cI'\setminus I$ we obtain
\[
\mathcal{N}_{Y/D_{I\cup \{i\}}}\oplus
s\big(\bigoplus_{j\in I'}\mathcal{N}_{D_j/X}|_Y\big)\cong
\mathcal{N}_{Y/D_{I''\cup\{i\}}}
\]
from \eqref{eqn.localiso}.
For the latter, it is crucial that the splitting used
to produce \eqref{eqn.localiso} is the restriction of a
splitting $s$ of \eqref{eqn.needsplit}.
Lemma \ref{lem.locallyisomorphic} is applicable (where the
open immersions that exist locally are isomorphisms), so that
\[
[\mathcal{N}^\circ_{\widetilde{D}_{I''}\cap E/\widetilde{X}}\actsfromright G]^{\mathrm{naive}}
-[\mathcal{N}^\circ_{\widetilde{D}_I\cap E/\widetilde{X}}\actsfromright G]^{\mathrm{naive}}=0
\]
in $\Burn^0_n(G)$, as desired.
\end{proof}

\begin{rema}
\label{rem.specialZ}
It is instructive to examine the proof of Proposition \ref{prop.classUG}
when $Z$ is equal to
the intersection of some of the divisors $D_i$.
Then the left-hand term in \eqref{eqn.needsplit} is always zero.
The equalities
\begin{align*}
[\mathcal{N}^\circ_{D_I/X}\actsfromright G]^{\mathrm{naive}}&=
[\mathcal{N}^\circ_{\widetilde{D}_{I''}\cap E/\widetilde{X}}\actsfromright G]^{\mathrm{naive}}
&&(\cI'\subseteq I)\qquad\text{and}\\
[\mathcal{N}^\circ_{\widetilde{D}_I\cap E/\widetilde{X}}\actsfromright G]^{\mathrm{naive}}&=
[\mathcal{N}^\circ_{\widetilde{D}_{I''}\cap E/\widetilde{X}}\actsfromright G]^{\mathrm{naive}}
&&(\cI'\not\subseteq I)
\end{align*}
can be traced back to canonical, globally defined $G$-equivariant isomorphisms
of the respective pairs of punctured normal bundles.
A general result, from which these isomorphisms can (indirectly) be obtained
is stated next.
\end{rema}

\begin{prop}
\label{prop.classUGrelated}
Let $W$ be a smooth quasiprojective variety with a generically free $G$-action,
$D=D_1\cup\dots\cup D_\ell$ a simple normal crossing divisor where
each $D_i$ is $G$-invariant, and $a_1$, $\dots$, $a_\ell$ positive integers.
Let $U:=W\setminus D$ and
\[ \cK:=i_*\cO_U, \]
where $i\colon U\to W$ denotes the inclusion.
and define the $\cO_W$-subalgebra
\[ \cA:=\{f\,|\,a_1\ord_1(f)+\dots+a_\ell\ord_\ell(f)\ge 0\} \]
of $\cK$,
where $\ord_i(f)$ denotes the order of vanishing of $f\in \cK_z$ along $D_i$
and the condition is imposed at all $z\in Z:=D_1\cap\dots\cap D_\ell$.
Then $V:=\Spec(\cA)$ is smooth with smooth divisor $E\subset V$
defined by the sheaf of ideals
\[ \cI:=\{f\,|\,a_1\ord_1(f)+\dots+a_\ell\ord_\ell(f)>0\}, \]
and we have a canonical $G$-equivariant isomorphism
\[ \mathcal{N}^\circ_{E/V}\cong \mathcal{N}^\circ_{Z/W}. \]
\end{prop}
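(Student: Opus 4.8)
The plan is to reduce the entire statement to a local toric computation near $Z$, where the sheaf $\cA$ becomes a monoid algebra, and then to promote the resulting local isomorphism to a canonical, $G$-equivariant global one by means of a natural identification of line bundles on $E$. First I would observe that both $\cA$ and $\cI$ depend on the weights $a_1,\dots,a_\ell$ only through the conditions $\sum_i a_i\ord_i(f)\ge 0$ and $\sum_i a_i\ord_i(f)>0$, which are unchanged upon dividing all $a_i$ by $g:=\gcd(a_1,\dots,a_\ell)$; so I may assume $g=1$. I would also note that $\cA=\cK$ and $\cI=\cA$ away from $Z$, so that $V\to W$ restricts to the open inclusion $U\hookrightarrow W$ over $W\setminus Z$ and $E$ is supported over $Z$; in particular everything to be proved is local on $W$ in a neighbourhood of $Z$, and $\mathcal{N}^\circ_{E/V}=\mathcal{N}_{E/V}\setminus 0_E$ since $E$ is the only relevant divisor.

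For the local analysis I would choose, Zariski-locally near a point of $Z$, coordinates $x_1,\dots,x_n$ with $D_i=\{x_i=0\}$ for $i\le\ell$, so that $\cK$ is the localization inverting $x_1\cdots x_\ell$ and $\cA$ is spanned by the monomials $x_1^{m_1}\cdots x_n^{m_n}$ (with $m_1,\dots,m_\ell\in\Z$ and $m_{\ell+1},\dots,m_n\ge 0$) satisfying $\sum_{i\le\ell} a_i m_i\ge 0$. This is, over $\cO_Z$, the monoid algebra of the half-space monoid $M=\{m\in\Z^\ell:\langle a,m\rangle\ge 0\}$, which is finitely generated by Gordan's lemma; splitting $\Z^\ell=\ker(a)\oplus\Z w$ with $\langle a,w\rangle=1$ identifies $\cA$ locally with $\cO_Z[\ker a][s]$, where $s=\chi^w$. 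Hence $V$ is smooth, $\cI=(s)$ cuts out a smooth divisor $E$, and $\pi^*D_i=a_iE$ (using $g=1$); this proves the first assertion.

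The core of the argument is the canonical isomorphism. Writing $M_i:=\mathcal{N}_{D_i/W}|_Z$, so that $\mathcal{N}_{Z/W}=\bigoplus_i M_i$ and $\mathcal{N}^\circ_{Z/W}$ is the complement of all coordinate hyperplanes, i.e. the fibre product over $Z$ of the $\G_m$-bundles $M_i\setminus 0_Z$, I would extract from the divisor identity $\pi^*D_i=a_iE$ a canonical $G$-equivariant isomorphism $(\pi|_E)^*M_i\cong \mathcal{N}_{E/V}^{\otimes a_i}$ on $E$, by comparing the pullback of the tautological section of $\cO_W(D_i)$ with the $a_i$-th power of the tautological section of $\cO_V(E)$ (the two have the same divisor, so they differ by a unit whose restriction to $E$ provides the identification). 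Using these, I would define a $Z$-morphism $\Psi\colon\mathcal{N}^\circ_{E/V}\to\mathcal{N}^\circ_{Z/W}$ sending $(e,\sigma)$, with $\sigma$ a nonzero normal vector to $E$ at $e$, to the tuple $(v_i)_i$ where $v_i\in M_{i,\pi(e)}$ is the image of $\sigma^{\otimes a_i}$ under the inverse of the above isomorphism; each $v_i$ is nonzero, so $\Psi$ lands in $\mathcal{N}^\circ_{Z/W}$, and $\Psi$ is manifestly canonical, hence $G$-equivariant and globally defined.

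It then remains to check that $\Psi$ is an isomorphism, and this is the step I expect to be the main obstacle, as it is where the combinatorics is genuinely used. Since $\Psi$ is a $Z$-morphism between twisted torus bundles, I would verify bijectivity fibrewise in the local coordinates above, where $\Psi$ becomes the monomial map attached to the integer matrix expressing the weights of the $v_i$ in terms of $s$ and the torus coordinates of $E$; its determinant is $\pm 1$ precisely because $w$ together with a basis of $\ker(a)$ spans $\Z^\ell$, which holds exactly under the reduction $g=1$. Care is needed to confirm that the tautological units $u_i$ defined by $\pi^*x_i=u_i s^{a_i}$ supply the torus coordinates on $E$, so that $\Psi$ is unimodular rather than merely an isogeny; had one kept a common factor $g>1$ the same map would be $g$-to-$1$, consistent with the fact that $V$ and $E$ are unchanged under scaling of the $a_i$. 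Once this is in place, $\Psi$ is a local isomorphism, and by the canonicity established in the previous paragraph the local isomorphisms are the restrictions of the single global $\Psi$, completing the proof.
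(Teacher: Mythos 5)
The paper states Proposition \ref{prop.classUGrelated} without proof, so there is no argument of the authors to measure yours against; judged on its own, your proof is correct, and the route you take --- a local toric model for $\cA$, the divisor identity $\pi^*D_i=a_iE$, the induced canonical identifications $(\pi|_E)^*\mathcal{N}_{D_i/W}|_Z\cong\mathcal{N}_{E/V}^{\otimes a_i}$, and the observation that the resulting fibrewise monomial map is unimodular exactly because of the normalization $\gcd(a_1,\dots,a_\ell)=1$ --- is presumably the intended one. The canonicity of the line-bundle identifications (unique isomorphism matching tautological sections, since those sections have equal divisor) does give $G$-equivariance and global well-definedness of $\Psi$ for free, as you claim, and the units $u_i=\chi^{e_i-a_iw}$ restricted to $E$ do generate the character lattice of the torus fibre, since the $e_i-a_iw$ generate $\ker(a)$.

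One point should be made explicit rather than left implicit. The set $\{f\mid a_1\ord_1(f)+\dots+a_\ell\ord_\ell(f)\ge 0\}$ is closed under multiplication but not under addition: for $\ell=2$, $a_1=a_2=1$, the element $x_1/x_2+x_2/x_1$ has $\ord_1=\ord_2=-1$ and fails the inequality, although both summands satisfy it. So ``the $\cO_W$-subalgebra $\cA$'' must be read as the subalgebra \emph{generated} by this set, which is precisely your monomial span $\{f\mid v_a(f)\ge 0\}$ for the quasi-monomial valuation $v_a$ with weights $a_i$ along $D_i$ (the generated subalgebra contains the monomial span because each admissible monomial lies in the defining set, and is contained in it because $v_a(f)\ge\sum_i a_i\ord_i(f)$). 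Since the smoothness of $V$ and $E$ is being proved for this object, you should record this identification as a step of the argument rather than silently substituting the monomial description. With that reading, and with the standard caveat that the coordinate presentation $\cO_W\cong\cO_Z[x_1,\dots,x_\ell]$ is only available \'etale-locally (which suffices, as smoothness and the formation of $\cA$ are \'etale-local), everything checks out.
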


\begin{prop}
\label{prop.classUebi}
Let $U$ and $U'$ be smooth quasiprojective varieties with
generically free $G$-actions, of the form
$U=X\setminus D$ and $U'=X'\setminus D'$ for
simple normal crossing divisors
\[
D=\bigcup_{i\in \cI}D_i,\qquad \cI:=\{1,\dots,\ell\},
\]
and
\[
D'=\bigcup_{i\in \cI'}D'_i,\qquad \cI':=\{1,\dots,\ell'\},
\]
where $X$ and $X'$ have
compatible $G$-actions, satisfy Assumption \ref{assu.strong},
and each $D_i$ and each $D'_i$ is
$G$-invariant.
If $U$ and $U'$ are equivariantly birationally equivalent, then 
\[ [U \actsfromright G]=[U' \actsfromright G] \]
in $\Burn_n(G)$.
\end{prop}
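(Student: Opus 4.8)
The plan is to reduce, by equivariant weak factorization, to the case of a single blow-up relating two compactifications, and then to separate the naive part of the class (already a birational invariant by Lemma~\ref{lem.Uebi}) from the boundary corrections, which are controlled by Lemma~\ref{lem.classUG}.

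First I would pass to compactifications. Since $U$ and $U'$ are equivariantly birationally equivalent, they share a dense $G$-invariant open subvariety, so the identity on this common open extends to a $G$-equivariant birational map $\phi\colon X\dashrightarrow X'$ between the given compactifications. Applying Proposition~\ref{prop.equivweakfact} with Remark~\ref{rem.equivweakfact} to $\phi$ with the boundary pair $(D,D')$, I obtain a zig-zag $X=X_0,X_1,\dots,X_N=X'$ of smooth projective $G$-varieties satisfying Assumption~\ref{assu.strong}, in which consecutive terms are related by an equivariant smooth blow-up or its inverse along a center having normal crossing with the (transform of the) boundary. Tracking as boundary only the transforms of the original $D$ and $D'$, so that the factorization-exceptional divisors lying over the interior are not counted, the interiors $U_i:=X_i\setminus B_i$ satisfy $U_0=U$ and $U_N=U'$ and consecutive $U_i$ are again related by a single equivariant smooth blow-up or its inverse. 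Chaining the desired equalities, it suffices (treating an inverse step by symmetry) to prove $[U_i\actsfromright G]=[U_{i+1}\actsfromright G]$ for one elementary blow-up $q\colon \widetilde X=\mathrm{Bl}_Z X\to X$ along a smooth $G$-invariant center $Z$ with normal crossing to $D$, writing $U=X\setminus D$ and $\widetilde U=\widetilde X\setminus q^{-1}(D)$.

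Because $Z$ is a single $G$-orbit of components while $U$ and $D$ are $G$-invariant, either every component of $Z$ lies in $D$ or every component meets $U$, and these cases are exhaustive. If $Z\subseteq D$, then $q$ is an isomorphism over $U$, so $\widetilde U=U$ and the equality $[U\actsfromright G]=[\widetilde U\actsfromright G]$ is precisely the independence of the compactification, i.e.\ Proposition~\ref{prop.classUG}. If instead $Z\not\subseteq D$, then the exceptional divisor $E=q^{-1}(Z)$ is not contained in $q^{-1}(D)$, so the boundary of $\widetilde X$ is the proper transform $\widetilde D=\bigcup_i\widetilde D_i$, with index set canonically that of $D$, and $\widetilde U$ is the blow-up of $U$ along $Z\cap U$. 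Here I would apply Lemma~\ref{lem.classUG} to both classes. The naive parts agree, $[U\actsfromright G]^{\mathrm{naive}}=[\widetilde U\actsfromright G]^{\mathrm{naive}}$, by Lemma~\ref{lem.Uebi}, since $U$ and $\widetilde U$ are equivariantly birational and both satisfy the hypotheses of that lemma (being the interiors of $X$ and $\widetilde X$). It then remains to match the boundary corrections
\[
\sum_{\emptyset\ne I}(-1)^{|I|}[\mathcal{N}^\circ_{\widetilde D_I/\widetilde X}\actsfromright G]^{\mathrm{naive}}
=\sum_{\emptyset\ne I}(-1)^{|I|}[\mathcal{N}^\circ_{D_I/X}\actsfromright G]^{\mathrm{naive}}.
\]

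The main obstacle is exactly this last equality in the case $Z\not\subseteq D$: it is the genuinely new phenomenon relative to Proposition~\ref{prop.classUG}, where the center always lies in the boundary. The difficulty is that term by term the punctured normal bundles $\mathcal{N}^\circ_{\widetilde D_I/\widetilde X}$ and $\mathcal{N}^\circ_{D_I/X}$ are only equivariantly birational over the dense locus $D_I^\circ\setminus Z$, while over the exceptional locus the normal bundle is twisted by the blow-up and new strata appear; one cannot conclude term by term, and the extra exceptional contributions must cancel within the alternating sum. I would handle this exactly by the cancellation scheme of Proposition~\ref{prop.classUG} and Example~\ref{exa.classUG}: organize the loci by their images $W$ in $Z\cap D$ together with the associated stabilizers, choose Zariski-local $G$-equivariant splittings of the relevant short exact sequences of normal bundles (using exactness of the invariant local section functor), note that the splitting becomes unique over the locus where Schur's lemma applies, and invoke Lemma~\ref{lem.locallyisomorphic} to identify the surviving contributions. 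The book-keeping---verifying that the puncturing conditions of Definition~\ref{defn.punctured} are respected and that the exceptional terms indexed by the various $I$ cancel in pairs---is the technical heart of the argument and runs in close parallel to the cancellation already carried out for Proposition~\ref{prop.classUG}.
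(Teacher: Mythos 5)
Your reduction is exactly the paper's: equivariant weak factorization to a single blow-up along a $G$-orbit $Z$ having normal crossing with the boundary, with the case $Z\subseteq D$ disposed of by Proposition~\ref{prop.classUG}. The gap is in the remaining case $Z\not\subseteq D$, which you correctly identify as the new content but do not actually prove. You first pass through Lemma~\ref{lem.classUG} to rewrite both classes in terms of \emph{punctured} normal bundles, observe (correctly) that $\mathcal{N}^\circ_{\widetilde D_I/\widetilde X}$ and $\mathcal{N}^\circ_{D_I/X}$ fail to match term by term over the exceptional locus, and then assert that the discrepancy cancels ``exactly by the cancellation scheme of Proposition~\ref{prop.classUG}.'' That scheme does not transfer directly: there the center lies in the boundary and the exceptional divisor $E$ becomes a new boundary component, so the whole combinatorial identity \eqref{eqn.rearrange} is organized around terms $[\mathcal{N}^\circ_{\widetilde D_I\cap E/\widetilde X}]^{\mathrm{naive}}$ on the right-hand side; in your situation $E$ is \emph{not} a boundary divisor, the index sets $\cI$ and $\cI'$ coincide, and the identity you need, namely $\sum_{\emptyset\ne I}(-1)^{|I|}\bigl([\mathcal{N}^\circ_{D_I/X}\actsfromright G]^{\mathrm{naive}}-[\mathcal{N}^\circ_{\widetilde D_I/\widetilde X}\actsfromright G]^{\mathrm{naive}}\bigr)=0$, would require its own bookkeeping of the strata supported over $Z\cap D_I$. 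As written, the technical heart of the argument is deferred to an analogy that does not apply verbatim.

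The paper closes this case much more simply, and without Lemma~\ref{lem.classUG}: work directly with formula \eqref{eqn.classUG}, i.e.\ with the \emph{unpunctured} normal bundles. Since $Z$ meets every $D_I$ transversely, the proper transform $\widetilde D_I$ is the blow-up of $D_I$ along $Z\cap D_I$ and
\[
\mathcal{N}_{\widetilde D_I/\widetilde X}\cong \mathcal{N}_{D_I/X}|_{\widetilde D_I},
\]
so the total space of $\mathcal{N}_{\widetilde D_I/\widetilde X}$ is the base change of that of $\mathcal{N}_{D_I/X}$ along the proper birational morphism $\widetilde D_I\to D_I$; hence the two total spaces are $G$-equivariantly birational and Lemma~\ref{lem.Uebi} gives $[\mathcal{N}_{D_I/X}\actsfromright G]^{\mathrm{naive}}=[\mathcal{N}_{\widetilde D_I/\widetilde X}\actsfromright G]^{\mathrm{naive}}$ term by term. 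Combined with Theorem~\ref{thm.ebi} for the leading terms $[X\actsfromright G]=[\widetilde X\actsfromright G]$, the two instances of \eqref{eqn.classUG} agree summand by summand. If you prefer to keep your punctured formulation, you must either prove the cancellation identity above from scratch or deduce it from this term-by-term equality together with two applications of Lemma~\ref{lem.classUG}; either way the observation $\mathcal{N}_{\widetilde D_I/\widetilde X}\cong \mathcal{N}_{D_I/X}|_{\widetilde D_I}$ is the missing ingredient.
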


\begin{proof}
As in the proof of
Proposition \ref{prop.classUG} it suffices to treat the case that
$X'$ is obtained from $X$
by equivariant smooth blow-up, where the center of blow-up $Z$ has
normal crossing with the divisors $D_i$, and the components of
$Z$ form a single $G$-orbit.
The case that $Z$ is disjoint from $U$ has already been treated,
so we may suppose that $Z$ meets $D_I$ transversely, for every $I\subseteq \cI$.
Then we have $\ell'=\ell$, and for every $I\subseteq \cI$,
\[ \mathcal{N}_{D'_I/X'}\cong \mathcal{N}_{D_I/X}|_{D'_I}. \]
By Lemma \ref{lem.Uebi},
\[ [\mathcal{N}_{D_I/X} \actsfromright G]^{\mathrm{naive}}=
[\mathcal{N}_{D'_I/X'} \actsfromright G]^{\mathrm{naive}} \]
in $\Burn_n(G)$.
By combining this with the equality from Theorem \ref{thm.ebi} we get the
desired result.
\end{proof}

Using Proposition \ref{prop.divisorialification} we are able to
extend the definition of the class of a
smooth projective $G$-variety
in the equivariant Burnside group, so that it is not necessary to
suppose that Assumption \ref{assu.strong} is satisfied.
We can extend, as well, the definition of the class of a
smooth quasiprojective $G$-variety to eliminate the requirement of
Assumption \ref{assu.strong}.

\begin{defi}
\label{defn.main}
Let $X$ be a smooth projective variety over $k$ with
a generically free $G$-action.
The associated class in $\Burn_n(G)$ is obtained by taking
$X'$ to be a smooth projective variety with
$G$-action and equivariant birational morphism to $X$, such that
$X'$ satisfies Assumption \ref{assu.strong}, and setting
\[ [X\actsfromright G]:=[X'\actsfromright G]\in \Burn_n(G). \]
Let $U$ be a smooth quasiprojective variety over $k$ with
a generically free $G$-action.
The associated class is obtained by taking $X$ with $G$-action to be a
smooth projective compactification of $U$ with
$U=X\setminus D$, such that $D=D_1\cup\dots\cup D_\ell$ is a simple normal
crossing divisor on $X$, where each $D_i$ is $G$-invariant, and
$X'$ to be a smooth projective variety with
$G$-action and equivariant birational morphism to $X$, such that
$X'$ satisfies Assumption \ref{assu.strong} and has a simple normal crossing divisor
$D'=D'_1\cup\dots\cup D'_{\ell'}$,
where each $D'_i$ is $G$-invariant,
and $D'$ is the support of the pre-image of $D$ in $X'$;
then
\[ [U\actsfromright G]:=[U'\actsfromright G]\in \Burn_n(G), \]
where $U'$ denotes the complement of $D'$ in $X'$.
\end{defi}

\begin{theo}
\label{thm.maingood}
The classes in Definition \ref{defn.main} are well-defined and give rise
to equivariant birational invariants of
smooth projective, respectively quasiprojective varieties over $k$
with a generically free $G$-action.
\end{theo}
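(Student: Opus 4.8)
The strategy is to reduce every assertion to the two invariance results already proved under Assumption \ref{assu.strong}, namely Theorem \ref{thm.ebi} in the projective case and Proposition \ref{prop.classUebi} in the quasiprojective case. The mechanism is uniform: each divisorialification or resolution step replaces a variety by one that is \emph{equivariantly birational} to it, so the constructed classes are classes of objects equivariantly birational to the original, to which the earlier invariance theorems apply directly.

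Consider first the projective case. Existence of a suitable $X'$ is Proposition \ref{prop.divisorialification}: a sequence of equivariant smooth blow-ups produces $X'\to X$ with $X'$ smooth, projective, and satisfying Assumption \ref{assu.strong}, so $[X'\actsfromright G]$ is defined by Definition \ref{defn.classXG}. If $X'_1\to X$ and $X'_2\to X$ are two such choices, then $X'_1$ and $X'_2$ are each equivariantly birational to $X$, hence to each other, and both satisfy Assumption \ref{assu.strong}; Theorem \ref{thm.ebi} gives $[X'_1\actsfromright G]=[X'_2\actsfromright G]$, so the class is well-defined. Taking $X'=X$ when $X$ already satisfies Assumption \ref{assu.strong} shows that Definition \ref{defn.main} is consistent with Definition \ref{defn.classXG}. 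For birational invariance, if $X_1$ and $X_2$ are equivariantly birational and $X'_j\to X_j$ are chosen satisfying Assumption \ref{assu.strong}, then $X'_1$ and $X'_2$ are equivariantly birational, so Theorem \ref{thm.ebi} yields $[X_1\actsfromright G]=[X'_1\actsfromright G]=[X'_2\actsfromright G]=[X_2\actsfromright G]$.

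For the quasiprojective case I would first record existence. Starting from $U$, equivariant resolution of singularities furnishes a smooth projective compactification $X\supset U$ with $D=X\setminus U$ a simple normal crossing divisor; after further equivariant blow-ups separating components within each $G$-orbit, the boundary may be organized into $G$-invariant smooth pieces $D_1,\dots,D_\ell$, as required by Remark \ref{rem.classUG}. Proposition \ref{prop.divisorialification}, in the form of Remark \ref{rem.divisorialification}, then produces $\varphi\colon X'\to X$ with $X'$ satisfying Assumption \ref{assu.strong} and a simple normal crossing divisor $D'$ with $G$-invariant components whose support is $\varphi^{-1}(D)$; setting $U':=X'\setminus D'=\varphi^{-1}(U)$ makes $[U'\actsfromright G]$ available through Definition \ref{defn.classUG}, and it is an invariant of $U'$ by Proposition \ref{prop.classUG}. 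The key observation is that $\varphi$ restricts to a proper birational equivariant morphism $U'\to U$, so $U'$ is equivariantly birational to $U$ while admitting a compactification, namely $X'$, satisfying Assumption \ref{assu.strong}.

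Granting this, well-definedness and invariance follow at once from Proposition \ref{prop.classUebi}. Two sets of choices yield $U'_1$ and $U'_2$, each equivariantly birational to $U$ and hence to each other, each with a compactification satisfying Assumption \ref{assu.strong}; Proposition \ref{prop.classUebi} gives $[U'_1\actsfromright G]=[U'_2\actsfromright G]$, so $[U\actsfromright G]$ is independent of all choices. Likewise, if $U_1$ and $U_2$ are equivariantly birational, the associated $U'_1$ and $U'_2$ are equivariantly birational, and Proposition \ref{prop.classUebi} gives $[U_1\actsfromright G]=[U_2\actsfromright G]$. The only nonformal ingredients are the geometric inputs guaranteeing existence of the required compactifications and divisorializations; I expect the point needing the most care to be the production of a compactification with $G$-\emph{invariant} boundary components (Remark \ref{rem.classUG}), together with the statement of Remark \ref{rem.divisorialification} that divisorialification can be arranged so that $\varphi^{-1}(D)$ is exactly a union of the boundary divisors $D'_i$. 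Once these are in hand, the reduction to Theorem \ref{thm.ebi} and Proposition \ref{prop.classUebi} is immediate.
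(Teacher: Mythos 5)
Your argument is correct and follows exactly the paper's route: the paper's own proof is a two-sentence reduction to Proposition \ref{prop.divisorialification} and Theorem \ref{thm.ebi} in the projective case, and to equivariant embedded resolution, Remark \ref{rem.divisorialification}, and Proposition \ref{prop.classUebi} in the quasiprojective case, which is precisely the chain of reductions you spell out. Your version merely makes explicit the intermediate observations (that any two divisorializations are equivariantly birational to each other, and that $U'=\varphi^{-1}(U)\to U$ is a proper birational equivariant morphism) that the paper leaves implicit.
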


\begin{proof}
For projective varieties this is clear by
Proposition \ref{prop.divisorialification} and
Theorem \ref{thm.ebi}.
For quasiprojective varieties we combine equivariant embedded resolution of
singularieties (for the existence of $D$ as claimed) with
Remark \ref{rem.divisorialification} (for $X'$ and $D'$)
and Proposition \ref{prop.classUebi}.
\end{proof}

\begin{rema}
It is easy to see that the classes $[U\actsfromright G]$ generate $\Burn_n(G)$.
\end{rema}

\section{Specialization}
\label{sec.specialization}
Let $\mathfrak{o}$ be a complete DVR with residue field $k$.
The goal of this section is to produce a specialization map
\[ \Burn_{n,K}(G)\to \Burn_{n,k}(G), \]
where $K$ denotes the field of fractions of $\mathfrak{o}$.
The non-equivariant case was treated in \cite{kontsevichtschinkel}.
A given smooth projective variety over $K$ is extended to
a regular model $\mathcal{X}$ over $\mathfrak{o}$, where the
special fiber is a strict normal crossing divisor.
Then the components of the special fiber, along with their intersections,
are used to define the specialization of $[X]$.
In one variant, the multiplicities of the components play no role.
Another variant involves the multiplicities and is sensitive to the
choice of a uniformizer of $\mathfrak{o}$.

\emph{We fix a choice of uniformizer $\pi\in \mathfrak{o}$.}
Then, from \cite[\S 5]{kontsevichtschinkel}:

\begin{defi}
\label{defn.Burnsidevol}
The \emph{Burnside volume}
\[ \rho_\pi\colon \Burn_{n,K}\to \Burn_{n,k} \]
is defined, for a smooth projective variety $X$ of
dimension $n$ over $K$ and regular model
$\mathcal{X}$ over $\mathfrak{o}$ whose special fiber is a
simple normal crossing divisor $D_1\cup\dots\cup D_\ell$,
with each $D_i$ irreducible, by
\[
\rho_\pi([X]):=
\sum_{\emptyset\ne I\subseteq \cI}(-1)^{|I|-1}[k(\omega_I^{-1}(1))],
\qquad
\cI:=\{1,\dots,\ell\}.
\]
Here,
\[ \omega_I\colon \mathcal{N}^{\circ}_{D_I/\mathcal{X}}\to \G_m \]
denotes the morphism obtained from the trivialization of
\[ \bigotimes_{i\in I} \mathcal{N}^{\otimes d_i}_{D_i/\mathcal{X}}|_{D_I} \]
determined by $\pi$, where
$d_i$ denotes the multiplicity of $D_i$ in the special fiber of $\cX$.
\end{defi}

\begin{exam}
\label{exa.I0star}
Let $E$ be an elliptic curve over $K:=k((t))$ with full $2$-torsion defined over
$K$ and minimal model over $k[[t]]$ of Kodaira type $I_0^*$.
The special fiber consists of four rational curves of multiplicity $1$
and a fifth rational curve of multiplicity $2$.
For $I=\{5\}$, $\omega_I^{-1}(1)$ is a degree $2$ cover of
$\PP^1\setminus \{\text{$4$ points}\}$, while the
contributions from all other $I$ cancel.
The outcome:
$\rho_t([E])$ is the class of an elliptic curve over $k$,
which changes by quadratic twist by $\alpha\in k^\times$ when we
replace $t$ by $\alpha t$.
\end{exam}

\begin{rema}
\label{rem.semistable}
As may be deduced, e.g., from \cite[Prop.\ 2.3.2]{nicaise},
$\omega_I^{-1}(1)$ may as well be obtained from the normalization of
\[ \Spec(\mathfrak{o}[\pi^{1/d_I}])\times_{\Spec(\mathfrak{o})}\mathcal{X}, \]
where $d_I$ denotes the gcd of the integers $d_i$, $i \in I$, as a
$\G_m^{|I|-1}$-torsor over the pre-image of $D^\circ_I$.
In combination with Proposition \ref{prop.classUGrelated}, this quickly
leads to the following observations:
\begin{itemize}
\item If $m$ is a positive integer, then the DVR
$\mathfrak{o}':=\mathfrak{o}[\pi^{1/m}]$ with field of fractions
$K':=K(\pi^{1/m})$ gives rise to a commutative diagram
\[
\xymatrix{
\Burn_{n,K}\ar[r]^{\rho_\pi}\ar[d] & \Burn_{n,k}\ar@{=}[d] \\
\Burn_{n,K'}\ar[r]^{\rho_{\pi^{1/m}}} & \Burn_{n,k}
}
\]
\item To obtain $\rho_\pi([X])$ we may take $m$ such that
a \emph{semistable} model $\mathcal{X}$ over $\mathfrak{o}'$ exists
and apply the specialization map
\[ \Burn_{n,K'}\stackrel{\rho}\longrightarrow \Burn_{n,k} \]
of \cite[\S 3]{kontsevichtschinkel}, where the uniformizing element does not
play a role.
(Such $m$ exists by \cite[\S IV.3]{KKMS}.)
\end{itemize}
\end{rema}

We turn now to the equivariant case.

\begin{defi}
\label{defn.equivBurnsidevol}
We define the \emph{equivariant Burnside volume}
\[ \rho^G_\pi\colon \Burn_{n,K}(G)\to \Burn_{n,k}(G) \]
by, for
$H\in \{H_1,\dots,H_r\}$, smooth projective
$Y$ over $K$ of dimension $d\le n$
with generically free action of $N_G(H)/H$, and $\beta$ a sequence of
$n-d$ elements of $A:=H^\vee$ which generates $A$,
sending
\[ (H,N_G(H)/H\actsfromleft K(Y),\beta) \]
to
\[ \sum_{\emptyset\ne I\subseteq \cI} (-1)^{|I|-1} (H,N_G(H)/H\actsfromleft k(\omega_I^{-1}(1)),\beta),
\qquad \cI:=\{1,\dots,\ell\}. \]
Here we take $\mathcal{Y}$ to be a regular model over $\mathfrak{o}$,
with compatible $N_G(H)/H$-action and special fiber a
simple normal crossing divisor
$D_1\cup\dots\cup D_\ell$, where each $D_i$ is $G$-invariant,
and let
\[ \omega_I\colon \mathcal{N}^{\circ}_{D_I/\mathcal{Y}}\to \G_m \]
denote the morphism obtained from the trivialization of
\[ \bigotimes_{i\in I} \mathcal{N}^{\otimes d_i}_{D_i/\mathcal{Y}}|_{D_I}, \]
determined by $\pi$, where the expression of the special fiber as
$D_1\cup\dots\cup D_\ell$ is taken, so that the components of each $D_i$ have
a common multiplicity $d_i$.
\end{defi}

The verification that Definition \ref{defn.equivBurnsidevol} yields a
well-defined homomorphism is straightforward.
Indeed we recognize that the equivariant Burnside group splits as
a direct sum according to the triviality or nontriviality of the
representation component of triples.
Now we view $Y$ as \emph{quasiprojective} over $\Spec(\mathfrak{o})$ and
apply Definition \ref{defn.main} to $Y\actsfromright N_G(H)/H$, with
compactification $\mathcal{Y}$.
In the definition we disregard Assumption \ref{assu.strong}
and disregard all contributions from
subvarieties with nontrivial generic stabilizer.
We also disregard the first term from \eqref{eqn.classUG}.
Disregarding all contributions from
subvarieties with nontrivial generic stabilizer reduces
Lemma \ref{lem.classUG} to a triviality.
The essential content is contained in Proposition \ref{prop.classUG},
specifically \eqref{eqn.rearrange}.
Since we are disregarding all contributions from
subvarieties with nontrivial generic stabilizer, the
left-hand side trivially vanishes.
The (local) identifications of direct sums of normal bundles that we obtain
from equivariant splittings of short exact sequences of vector bundles
respect the invertible function on punctured normal bundles determined by $\pi$.
So, the cancellations remain valid
when the naive classes are replaced by the classes of the fibers over $1$.
The direct summand of $\Burn_{d,k}(N_G(H)/H)$, determined by the
triviality of the first component of the triples, maps to
$\Burn_{n,k}(G)$ by replacing the first, respectively third
component in each triple by $H$, respectively $\beta$.

\begin{rema}
\label{rem.specializetoorbits}
The class of an irreducible variety with $G$-action can specialize
to an orbit of varieties.
For instance, a non-hyperelliptic curve of genus $3$ with
unramified degree $2$ cover may be presented by the defining equation
\[ Q_1Q_3=Q_2^2, \]
in $\bP^2$, respectively
\[ Q_1=r^2,\qquad Q_2=rs,\qquad Q_3=s^2, \]
in $\bP^4$, where
$Q_1$, $Q_2$, $Q_3\in k[u,v,w]$ are homogeneous of degree $2$,
with $G:=\Z/2\Z$ acting by $(u:v:w:r:s)\mapsto (u:v:w:-r:-s)$
(see \cite{bruin-prym}).
Now we define $X$ over $k((t))$ by replacing $Q_1$, respectively $Q_3$, by
\[ \widetilde{Q}_1:=u^2+tQ_1,\qquad
\text{respectively}\qquad \widetilde{Q}_3:=v^2+tQ_3. \]
For general $Q_1$, $Q_2$, $Q_3$, the same equations define a regular model
$\cX$ where the special fiber consists of two pairs of conics
exchanged by $G$,
and the equivariant Burnside volume of
$[X\actsfromright G]=(\mathrm{triv},G\actsfromleft k(X),\mathrm{triv})$
is a nonzero multiple of
$(\mathrm{triv},G\actsfromleft k(z)\times k(z),\mathrm{triv})$.
\end{rema}

We formulate a version of Assumption \ref{assu.strong} for an
action of $G$ on a regular model $\cX$ of a projective variety $X$ over $K$.
As before, we require all stabilizers of the action to be abelian.
For each $H\in \{H_1,\dots,H_r\}$ and $N_G(H)$-orbit $Y$ of components of
$\cX^H$ where the stabilizer at the generic points of the components is
equal to $H$, we consider two cases:
\begin{itemize}
\item For $Y$ contained in the special fiber we require the composite
\[
\Pic^G(\cX)
\to \rH^1(N_G(H),k(Y)^\times) 
\to
\rH^1(H,k(Y)^\times)^{N_G(H)/H}\to H^\vee
\]
to be surjective.
\item Otherwise, we require the composite
\[
\Pic^G(\cX)
\to \rH^1(N_G(H),K(Y)^\times) 
\to \rH^1(H,K(Y)^\times)^{N_G(H)/H}\to H^\vee
\]
to be surjective.
\end{itemize}

\begin{theo}
\label{theo.ebvXG}
Let $X$ be a smooth projective variety over $K$ with a generically free $G$-action
and regular model $\cX$, projective over $\mathfrak{o}$, to which the
$G$-action extends, whose special fiber is a simple normal crossing divisor
$D_1\cup\dots\cup D_\ell$,
where each $D_i$ is $G$-invariant and
components of each $D_i$ have a common multiplicity $d_i$.
We suppose that the $G$-action on $\cX$ satisfies the above variant of
Assumption \ref{assu.strong}.
Let
\[ \omega_I\colon \mathcal{N}^\circ_{D_I/\cX}\to \G_m \]
denote the morphism obtained from the trivialization of
\[
\bigotimes_{i\in I}\mathcal{N}_{D_i/\cX}|_{D_I},
\]
determined by $\pi$.
Then
\begin{align*}
\rho_\pi^G(&[X\actsfromright G])=\sum_{\emptyset\ne I\subseteq\cI}
(-1)^{|I|-1}\sum_{i=1}^r\\
&\sum_{\substack{\text{$V\subset \mathcal{N}^\circ_{D_I/\cX}$ with}\\ \text{generic stabilizer $H_i$}}}
(H_i,N_G(H_i)/H_i\actsfromleft k(V\cap\omega_I^{-1}(1)),\beta_V(\mathcal{N}^\circ_{D_I/\cX})).
\end{align*}
\end{theo}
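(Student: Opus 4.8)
The plan is to compute $\rho_\pi^G([X \actsfromright G])$ directly from its definition and show the result matches the stated formula. First I would apply Definition~\ref{defn.equivBurnsidevol} to each symbol $(H_i, N_G(H_i)/H_i \actsfromleft k(Y), \beta_Y(X))$ appearing in $[X \actsfromright G]$, where $Y$ ranges over the $N_G(H_i)$-orbits of components of $X^{H_i}$ with generic stabilizer $H_i$. The key point is that the regular model $\cX$ restricts, over the fixed locus, to give a compatible regular model of each such $Y$: the closure $\mathcal{Y} \subset \cX$ of $Y$ is a regular model with $N_G(H_i)/H_i$-action whose special fiber is the simple normal crossing divisor obtained by intersecting with $D_1 \cup \dots \cup D_\ell$. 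I would need to verify that the variant of Assumption~\ref{assu.strong} on $\cX$ descends to the analogous condition on $\mathcal{Y}$, so that Definition~\ref{defn.equivBurnsidevol} is applicable fiber-by-fiber.

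Next I would reorganize the double sum. Applying the definition produces, for each $Y$, a sum over $\emptyset \ne J \subseteq \{1, \dots, \ell_Y\}$ (indexing the special-fiber components met by $\mathcal{Y}$) of the classes over $\omega_J^{-1}(1)$. The main task is to match this indexing against the right-hand side, which is organized by $I \subseteq \cI = \{1, \dots, \ell\}$ and by loci $V \subset \mathcal{N}^\circ_{D_I/\cX}$ with generic stabilizer $H_i$. The bridge is the observation that a locus $V$ in the punctured normal bundle $\mathcal{N}^\circ_{D_I/\cX}$ with generic stabilizer $H_i$ corresponds to a component of the fixed locus $\cX^{H_i}$ meeting $D_I$ transversally, and conversely the fixed-locus components $Y$ determine, via their intersection with the special fiber, exactly the data $(I, V)$. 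Here I would invoke the same local splitting technique used in Example~\ref{exa.classUG} and in the proof of Proposition~\ref{prop.classUG}: the normal bundle $\mathcal{N}_{Y/\cX}$ along a fixed-locus component decomposes equivariantly according to the characters, and the $H_i$-fixed part recovers the normal directions inside $D_I$ while the moving part records $\beta_Y(X)$.

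The crucial compatibility to check is that the invertible function $\omega_I$ on $\mathcal{N}^\circ_{D_I/\cX}$, determined by $\pi$ through the trivialization of $\bigotimes_{i \in I} \mathcal{N}_{D_i/\cX}|_{D_I}$, restricts correctly to the function $\omega_J$ used in the definition of $\rho_\pi^G$ applied to $\mathcal{Y}$. Since both are built from the same uniformizer $\pi$ and the multiplicities $d_i$, and since the local equivariant splittings of the relevant short exact sequences of normal bundles respect these trivializations (exactly as noted in the verification that Definition~\ref{defn.equivBurnsidevol} is well-defined), the fibers over $1$ match up. This lets me identify $k(V \cap \omega_I^{-1}(1))$ with the function field appearing in the fiberwise computation and $\beta_V(\mathcal{N}^\circ_{D_I/\cX})$ with the transported representation $\beta$.

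The main obstacle I anticipate is the bookkeeping in the transversality and stabilizer analysis: I must ensure that every term on the right-hand side arises from exactly one fixed-locus component $Y$ (and one subset $I$), with no double-counting and no omissions, and that the signs $(-1)^{|I|-1}$ are preserved under the reindexing. In particular, for a fixed $V \subset \mathcal{N}^\circ_{D_I/\cX}$ the generic stabilizer $H_i$ acts trivially on the base but nontrivially on the fiber directions recorded by $\beta$, and I must confirm that the $H_i$-fixed part of $\mathcal{N}_{D_I/\cX}$ is precisely what the fiberwise $\omega_J$ sees. Once this dictionary between $(I, V)$ on the model and $(Y, J)$ on the fibers is established, together with the compatibility of the $\pi$-trivializations, the two expressions coincide termwise and the theorem follows.
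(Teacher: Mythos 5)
Your proposal is correct and is precisely the unwinding that the paper intends: its proof of Theorem \ref{theo.ebvXG} is the single line ``this follows directly from the definition,'' relying on the discussion after Definition \ref{defn.equivBurnsidevol}, and your plan — take the closures of the fixed loci $Y$ inside $\cX$ as the regular models $\mathcal{Y}$, match the strata $V\subset \mathcal{N}^\circ_{D_I/\cX}$ with the pairs $(Y,J)$ via the equivariant splitting of normal bundles, and check that the $\pi$-trivializations defining $\omega_I$ and $\omega_J$ agree — is exactly that unwinding made explicit. The key bookkeeping point you flag (that every $V$ with generic stabilizer $H_i$ over $D_I^\circ$ forces $H_i$ to act trivially on each $\mathcal{N}_{D_i/\cX}$, $i\in I$, so the relevant component of $\cX^{H_i}$ dominates $\Spec(\mathfrak{o})$ and no terms are missed) is the right one.
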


\begin{proof}
This follows directly from the definition.
\end{proof}

\begin{rema}
\label{rem.semistableG}
The $G$-equivariant analogues of the statements from
Remark \ref{rem.semistable} are valid.
\end{rema}

\begin{coro}
\label{cor.ebvXG}
Let $X$ and $X'$ be smooth projective varieties over $K$
with generically free $G$-actions, admitting regular models
$\cX$, respectively $\cX'$, smooth and projective over $\mathfrak{o}$, to
which the $G$-action extends.
If $X$ and $X'$ are $G$-equivariantly birational over $K$,
then the special fibers of $\cX$ and $\cX'$ are
$G$-equivariantly birational over $k$.
\end{coro}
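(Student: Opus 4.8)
The plan is to combine the $G$-equivariant birational invariance of $[X\actsfromright G]$ (Theorem \ref{thm.maingood}) with the fact that $\rho^G_\pi$ is a homomorphism, and then to read off birationality of the special fibers from a canonical direct summand of $\Burn_{n,k}(G)$. First I would isolate that summand. A generator $(H,N_G(H)/H\actsfromleft K,\beta)$ has $\beta$ empty precisely when $H=\{1\}$, since a faithful $0$-dimensional representation with trivial invariants exists only for the trivial group; equivalently, every generator with $H\neq\{1\}$ has $n-d\ge 1$. The conjugation relations are vacuous on the symbols with $H=\{1\}$, and the blow-up relations \textbf{(B1)}, \textbf{(B2)} apply only to, and produce only, symbols with $n-d\ge 1$, hence with $H\neq\{1\}$. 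Therefore the symbols $(\{1\},G\actsfromleft K,())$ span a direct summand of $\Burn_{n,k}(G)$ that is free on the $G$-equivariant birational types of $n$-dimensional varieties with generically free action, and there is a well-defined projection $\Pi$ onto this free abelian group (killing all symbols with $H\neq\{1\}$); the same holds over $K$.

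Next I would note the compatibility of $\rho^G_\pi$ with $\Pi$. By Definition \ref{defn.equivBurnsidevol} the map $\rho^G_\pi$ leaves the pair $(H,\beta)$ unchanged, sending $(H,N_G(H)/H\actsfromleft K(Y),\beta)$ to an alternating sum of triples with the \emph{same} $H$ and $\beta$; in particular it carries the summand $H=\{1\}$ to the summand $H=\{1\}$, so $\Pi\circ\rho^G_\pi=\Pi\circ\rho^G_\pi$ factors through $\Pi$ on the source. Thus I only need to track the top symbol, and $\Pi\big(\rho^G_\pi([X\actsfromright G])\big)$ equals the image under $\rho^G_\pi$ of the single top symbol $(\{1\},G\actsfromleft K(X),())$ of $[X\actsfromright G]$ (which has coefficient $1$, coming from the dense orbit).

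Then I would compute this image using the smooth model. Since $\cX\to\Spec(\mathfrak{o})$ is smooth and projective and $X$ is connected, the special fiber $\cX_0$ is smooth and connected (local constancy of the number of geometric components of fibers of a smooth proper morphism), hence irreducible. As $\pi$ is $G$-invariant, the normal bundle of $\cX_0$ is $G$-equivariantly trivial, so if $h\in G$ fixed the generic point of $\cX_0$ it would fix $\cX_0$ pointwise and act trivially on its normal direction, forcing $h=1$ by faithfulness on $X$; hence the $G$-action on $\cX_0$ is generically free. So $\cX_0$ is a smooth projective variety of dimension $n$ with generically free $G$-action, and $[\cX_0\actsfromright G]$ has top symbol $(\{1\},G\actsfromleft k(\cX_0),())$ with coefficient $1$. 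For the smooth model, the special fiber is the single $G$-invariant multiplicity-one divisor $D_1=\cX_0$, so in Definition \ref{defn.equivBurnsidevol} only $I=\{1\}$ contributes, and the $\pi$-trivialization gives a $G$-equivariant isomorphism $\omega_{\{1\}}^{-1}(1)\cong\cX_0$ with trivial action on the $\G_m$-direction. Hence $\rho^G_\pi$ sends $(\{1\},G\actsfromleft K(X),())$ to $(\{1\},G\actsfromleft k(\cX_0),())$, and therefore $\Pi\big(\rho^G_\pi([X\actsfromright G])\big)$ is the generator attached to $\cX_0$; the analogous identity holds for $X'$ and $\cX'_0$.

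Finally I would conclude: the hypothesis that $X$ and $X'$ are $G$-equivariantly birational over $K$ gives $[X\actsfromright G]=[X'\actsfromright G]$ in $\Burn_{n,K}(G)$, and applying the homomorphism $\Pi\circ\rho^G_\pi$ yields equality of the generators attached to $\cX_0$ and $\cX'_0$ in the free abelian group of $G$-equivariant birational types. Since each is a single generator with coefficient $1$, they must coincide, i.e.\ $\cX_0$ and $\cX'_0$ are $G$-equivariantly birational over $k$. I expect the main obstacle to be the third step: verifying that for a \emph{smooth} model the special fiber is irreducible with generically free action and that $\omega_{\{1\}}^{-1}(1)\cong\cX_0$ equivariantly, so that $\rho^G_\pi$ collapses to a single top symbol. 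This is exactly where smoothness (rather than mere regularity, cf.\ Remark \ref{rem.specializetoorbits}) is indispensable: for a regular model with reducible special fiber the top symbol maps to a nontrivial alternating sum over the strata $\omega_I^{-1}(1)$, and one recovers only equality of equivariant Burnside volumes (Theorem \ref{theo.ebvXG}) rather than outright birationality of the special fibers.
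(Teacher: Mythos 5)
Your argument is correct and is precisely the reasoning the paper leaves implicit (the corollary is stated without a written proof, as a consequence of Theorem \ref{thm.maingood}, the homomorphism $\rho^G_\pi$, and the computation of Theorem \ref{theo.ebvXG} on a smooth model, where the special fiber is the unique multiplicity-one component and $\omega_{\{1\}}^{-1}(1)\cong\cX_0$ via the $\pi$-trivialization). In particular you correctly supply the one step that is essential but unstated in the paper — that the symbols with trivial stabilizer are untouched by all relations and hence span a free direct summand recording the $G$-equivariant birational type — without which one would only conclude equality of the classes $[\cX_0\actsfromright G]$ and $[\cX'_0\actsfromright G]$ rather than birationality of the special fibers.
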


The following generalizes the notion of
$B$-rational singularities of \cite{kontsevichtschinkel}.

\begin{defi}
\label{def.BGrat}
Let $X_0$ be a singular projective variety over $k$ with a
generically free $G$-action.
We say that $X_0$, respectively a pair $(\cX,X_0)$
has \emph{$BG$-rational singularities} if for every
projective model $\cX$ over $\mathfrak{o}$, respectively a given
projective model $\cX$,
with $G$-action, smooth generic fiber $X$, and special fiber
$G$-equivariantly isomorphic to $X_0$, we have
\[ \rho^G_\pi([X\actsfromright G])=[X_0\actsfromright G]. \]
\end{defi}

\begin{exam}
\label{exa.ODPisBGrat}
If the only singularity of $X_0$ is an
orbit of isolated ordinary double point singularities on which
$G$ acts simply transitively,
then $X_0$ is $BG$-rational.
Indeed, any projective model $\cX$
is resolved by a sequence of
blow-ups of orbits of points, and it is straightforward to verify using
Theorem \ref{theo.ebvXG} that $[X\actsfromright G]$ specializes to
$[X_0\actsfromright G]$.
\end{exam}

\section{Equivariant Burnside groups and orbifolds}
\label{sect:orbi}
In this section, we define a natural homomorphism 
from the equivariant Burnside group to the Burnside group of orbifolds:
\[
\kappa^G: {\Burn}_n(G)                  \to         {\oBurn}_n,
\]
a group we introduced in \cite{Bbar}, 
as a quotient, by explicit relations, of the $\Z$-module with generators 
\[
([X], [\beta]),
\]
where 
\begin{itemize}
\item $[X] \in \Burn_d$, $d\le n$, and 
\item $[\beta] \in \overline{\mathcal B}_{n-d}$,  a certain 
invariant of representations of finite abelian groups \cite[Definition 3.1]{Bbar}.
\end{itemize}
Then the class in ${\oBurn}_n$ of an $n$-dimensional orbifold $\cX$ is defined  
as follows (see \cite[Section 4]{Bbar} for terminology and precise definitions): 
\begin{itemize}
\item After divisorialification we may assume that  
$\cX$ is {\em divisorial} (with respect to some finite collection of line bundles);
\item Express 
\[
\cX=\coprod_H \cX_H, 
\]
where $\cX_H$ are strata characterized by the isomorphism type of 
their geometric stabilizer group $H$ (an abelian group); 
\item Put
\begin{equation}
\label{eqn.classcX}
[\cX]:=\sum_{H} ([X_H],[N_{X,H}])\in {\oBurn}_n,
\end{equation} 
where $X_H$ is the coarse moduli space of $\cX_H$, 
and the representation $[N_{X,H}]$ is extracted from
the normal bundle $N_{X,H}:=\mathcal{N}_{\cX_H/\cX}$.
\end{itemize}
This class is a well-defined invariant of the birational type of $\cX$. 

\begin{exam}
\label{exa.linearquotient}
Let an abelian group $H$ act trivially on a
smooth projective $d$-dimensional variety $Y$ and diagonally
by characters $a_1$, $\dots$, $a_{n-d}$ generating
$A:=H^\vee$ on $\A^{n-d}$.
Then
\begin{equation}
\label{eqn.YAH}
[Y\times \A^{n-d}/H]=\sum_{I\subseteq \{1,\dots,n-d\}}
(-1)^{|I|}([Y],(\bar a_1,\dots,\bar a_{n-d})),
\end{equation}
where $\bar a_i$ denotes the character of $H_I:=\bigcap_{i\in I}\ker(a_i)$ given
by the class of $a_i$ in $A_I:=A/\langle a_i\rangle_{i\in I}$.
We obtain \eqref{eqn.YAH} from \eqref{eqn.classcX}
by repeatedly applying modified scissors relations
\cite[\S 2]{Bbar} to subvarieties defined by the vanishing of
a coordinate of $\A^{n-d}$.
\end{exam}

\begin{rema}
\label{rem.linearquotientzero}
Suppose that $a_i=0$ for some $i$ in Example \ref{exa.linearquotient}.
Then $[Y\times \A^{n-d}/H]$ is a product with $\A^1$.
Hence \eqref{eqn.YAH} vanishes by the triviality in $\Burn_n$
of any product with $\A^1$
(cf.\ the proof of \cite[Prop.\ 2.2]{Bbar}).
Another way to obtain the vanishing is to notice that in the sum \eqref{eqn.YAH}
the terms indexed by $I$ and $I\cup\{i\}$ cancel, for $i\notin I$.
\end{rema}

In Example \ref{exa.linearquotient} the orbifold in question is the
total space of a direct sum of line bundles over $Y\times BH$.
More generally, we may consider a smooth projective
Deligne-Mumford stack $\cX$ with line bundles
$L_1$, $\dots$, $L_r$, such that the
total space of $L_1\oplus\dots\oplus L_r$ is an orbifold.
It will be convenient to require this orbifold to be divisorial.
We recall a well-known fact
\cite[Prop.\ 2.1]{olssonboundedness}:
\emph{Every smooth separated finite-type Deligne-Mumford stack over a field
is a gerbe over an orbifold}.
A smooth projective (respectively, quasiprojective)
Deligne-Mumford stack $\cX$ is a gerbe, then,
over a projective (respectively, quasiprojective) orbifold $\overline{\cX}$.

\begin{lemm}
\label{lem.star}
Let $\cX$ be a a smooth separated irreducible Deligne-Mumford stack of
finite type over $k$,
gerbe over the orbifold $\overline{\cX}$, with finite abelian group
$H$ as the isomorphism type of the geometric generic point of $\cX$, and let
$L_1$, $\dots$, $L_r$ be line bundles on $\cX$.
The following are equivalent.
\begin{itemize}
\item[(i)] The orbifold $\overline{\cX}$ is divisorial, and the
representation of $H$ at the geometric generic point of $\cX$ determined
by $L_1\oplus\dots\oplus L_r$ is faithful.
\item[(ii)] The total space of
$L_1\oplus\dots\oplus L_r$
is an orbifold and is divisorial.
\item[(iii)] The total space of
$L_1\oplus\dots\oplus L_r$
is an orbifold and is divisorial with respect to the pullbacks of
$L_1$, $\dots$, $L_r$ and some finite collection of line bundles
on $\overline{\cX}$.
\end{itemize}
\end{lemm}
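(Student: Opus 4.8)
The plan is to prove the chain (i) $\Rightarrow$ (iii) $\Rightarrow$ (ii) $\Rightarrow$ (i), of which (iii) $\Rightarrow$ (ii) is immediate, divisoriality with respect to a named collection being a special case of divisoriality. Write $p\colon \cX\to\overline{\cX}$ for the gerbe, $\cV$ for the total space of $L_1\oplus\dots\oplus L_r$, $q\colon\cV\to\cX$ for its projection, and $\bar q:=p\circ q$. For a geometric point $x$ of $\cX$ over $\bar x$ of $\overline{\cX}$, abbreviate $G_x:=\Aut(x)$, $\overline{G}_{\bar x}:=\Aut(\bar x)$, and let $H_x:=\ker(G_x\to\overline{G}_{\bar x})$ be the relative inertia, which is central in $G_x$ and noncanonically isomorphic to the band $H$. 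Since all three conditions are geometric, I would first reduce to $k$ algebraically closed. The two structural inputs are: first, the gerbe sequence $0\to\Pic(\overline{\cX})\to\Pic(\cX)\xrightarrow{r}H^\vee$, where $r$ records the character by which the band acts on a line bundle, together with the \emph{local constancy} of this character, namely that the character of $H_x$ on a fibre $M_x$ is, under $H_x\cong H$, the value $r(M)$; secondly, that $\cV\to\cX$ is a vector bundle, so $\cV$ is smooth with generic stabilizer $\ker(\rho)$, where $\rho$ is the representation of $H$ on $L_1\oplus\dots\oplus L_r$ at the generic point. The latter yields the equivalence \emph{$\cV$ is an orbifold} $\iff$ \emph{$\rho$ is faithful}, the representation clause common to (i)--(iii). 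By local constancy, faithfulness of $\rho$ forces $\rho|_{H_x}$ faithful for every $x$: the datum $(H_x,(\chi_{L_i}|_{H_x})_i)$ is a local system of finite groups-with-characters, and the property that the characters generate $H_x^\vee$ is monodromy-invariant, hence holds on all of connected $\cX$ once it holds generically.

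For (i) $\Rightarrow$ (iii), I take $B_1,\dots,B_t$ on $\overline{\cX}$ witnessing that $\overline{\cX}$ is divisorial, so their characters separate each $\overline{G}_{\bar x}$, and I claim $q^*L_1,\dots,q^*L_r,\bar q^*B_1,\dots,\bar q^*B_t$ divisorialize $\cV$. The check is pointwise: at a geometric point $(x,v)$ of $\cV$ with stabilizer $S\subseteq G_x$, suppose $s\in S$ kills all these characters. The bundles $\bar q^*B_j$ are pulled back from $\overline{\cX}$, so their characters factor through $\overline{G}_{\bar x}$; their vanishing on $s$ together with separation forces the image of $s$ in $\overline{G}_{\bar x}$ to be trivial, i.e.\ $s\in H_x$. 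Then $\chi_{L_i}(s)=1$ for all $i$, and faithfulness of $\rho|_{H_x}$ gives $s=1$. Thus $S$ injects into a torus, the morphism to $\prod B\G_m$ is representable, and since $\rho$ is faithful $\cV$ is an orbifold; this is (iii).

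For (ii) $\Rightarrow$ (i), let $N_1,\dots,N_s$ divisorialize $\cV$. Restricting along the zero section $\cX\hookrightarrow\cV$, whose points $(x,0)$ carry the full stabilizer $G_x$, the $N_k|_{\cX}$ separate every $G_x$, so the finitely generated subgroup $\Lambda\subseteq\Pic(\cX)$ they generate surjects onto each $G_x^\vee$ under evaluation. I then set $\Lambda_0:=\Lambda\cap\Pic(\overline{\cX})=\ker(r|_\Lambda)$, a finitely generated group of line bundles descending to $\overline{\cX}$, and claim these divisorialize $\overline{\cX}$. Given $\bar x$ and $\theta\in\overline{G}_{\bar x}^\vee$, viewed as a character of $G_x$ trivial on $H_x$, I lift $\theta$ to $\lambda\in\Lambda$ by surjectivity; triviality of $\theta$ on $H_x$ together with local constancy gives $r(\lambda)=0$, so $\lambda\in\Lambda_0$ and evaluates to $\theta$ on $\overline{G}_{\bar x}$. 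Hence $\Lambda_0$ surjects onto each $\overline{G}_{\bar x}^\vee$, a finite generating set exhibits $\overline{\cX}$ as divisorial, and combined with the faithfulness of $\rho$ from $\cV$ being an orbifold this gives (i).

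The main obstacle is the descent in (ii) $\Rightarrow$ (i): the divisorializing bundles on $\cV$ are arbitrary, and one must produce from them honest line bundles on $\overline{\cX}$ that still separate the orbifold stabilizers. The gerbe sequence $0\to\Pic(\overline{\cX})\to\Pic(\cX)\xrightarrow{r}H^\vee$, together with the compatibility of $r$ with the pointwise band-characters, is exactly what makes this possible, so the genuine care lies in establishing this local constancy (and in the reduction to the connected, algebraically closed setting, where the band has constant order and the generating condition is monodromy-invariant). Once these facts about the gerbe are in hand, both nontrivial implications reduce to the elementary observation that if a central subgroup and the corresponding quotient each admit a separating family of characters, then so does the whole group, the two families being supplied respectively by $\rho$ and by the divisorialization of $\overline{\cX}$.
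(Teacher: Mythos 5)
Your proof is correct and follows essentially the same route as the paper: both rest on the observations that the total space is an orbifold iff the band representation is faithful, that divisoriality transfers between the total space and $\cX$ via the zero section and the representable projection, and that a line bundle on $\cX$ descends to $\overline{\cX}$ exactly when its band character vanishes (Alper's criterion), with local constancy of that character doing the remaining work. The only cosmetic difference is in (ii) $\Rightarrow$ (i): the paper twists each divisorializing bundle by a tensor combination of the $L_i$ to force descent, whereas you pass to the kernel of the band-character map on the lattice generated by the restricted bundles --- both reduce to the same exact sequence for $\Pic$ of a gerbe.
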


\begin{proof}
The kernel of the representation in (i) is the geometric generic stabilizer
of the total space of
$L_1\oplus\dots\oplus L_r$.
This lets us rephrase (ii) and (iii), replacing the requirement for the
total space of $L_1\oplus\dots\oplus L_r$ to be an orbifold by the
faithful representation requirement from (i).
For an orbifold to be divisorial with respect to a
finite collection of line bundles, we recall, means that the corresponding
morphism to a product of copies of $B\G_m$ is representable;
we allow ourselves to
apply this terminology to arbitrary Deligne-Mumford stacks.
This way, we can formulate variants of (ii) and (iii), let us say
$\mathrm{(ii')}$ and $\mathrm{(iii')}$, in which instead of requiring
$L_1\oplus\dots\oplus L_r$ to be divisorial we require $\cX$ to be divisorial.
Since $\cX$ sits as the zero-section in $L_1\oplus\dots\oplus L_r$, we have
$\mathrm{(ii)} \Rightarrow \mathrm{(ii')}$ and
$\mathrm{(iii)} \Rightarrow \mathrm{(iii')}$.
As well, $L_1\oplus\dots\oplus L_r\to \cX$ is representable, so
$\mathrm{(ii')} \Rightarrow \mathrm{(ii)}$ and
$\mathrm{(iii')} \Rightarrow \mathrm{(iii)}$.
Trivially, $\mathrm{(iii)} \Rightarrow \mathrm{(ii)}$.

We establish $\mathrm{(i)} \Rightarrow \mathrm{(iii')}$ by the
observation that the stabilizer at a geometric point of $\cX$ contains $H$,
such that the quotient by $H$ is the stabilizer at the corresponding
geometric point of $\overline{\cX}$.
We are done, then, if we can establish $\mathrm{(ii')} \Rightarrow \mathrm{(i)}$.
A line bundle on $\cX$ comes from $\overline{\cX}$ if and only if induces
the trivial representation of $H$, according to the criterion of
\cite[Thm.\ 10.3]{alper} (in a form adapted to a relative notion of
coarse moduli space, cf.\ \cite[\S 2.3]{oesinghaus}).
Since we have a faithful representation as in (i), we get a surjective
homomorphism of character groups $\Z^r\to H^\vee$.
So any line bundle on $\cX$ will, after adjustment by a suitable
tensor combination of $L_1$, $\dots$, $L_r$, descend to $\overline{\cX}$.
Carrying this out for a collection of line bundles, with respect to which
$\cX$ is divisorial, we get vector bundles, with respect to which
$\overline{\cX}$ is divisorial
\end{proof}

Given $\cX$, $L_1$, $\dots$, $L_r$ satisfying the equivalent conditions
of Lemma \ref{lem.star}, with $\cX$ quasiprojective of dimension $n-r$,
we define, following the notation of \cite[Thm.\ 4.1]{Bbar},
\[ [\cX,(L_1,\dots,L_r)]:=\sum_P ([X_P],[N_{X,P}\oplus L_1\oplus\dots\oplus L_r])
\in \oBurn_n, \]
sum over geometric stabilizer groups $P$ of $\cX$.

\begin{lemm}
\label{lem.orbifolddivisors}
Let $\cX$, $L_1$, $\dots$, $L_r$ satisfy the equivalent conditions of
Lemma \ref{lem.star}, with $\cX$ quasiprojective of dimension $n-r$,
and let $D=D_1\cup\dots\cup D_\ell$ be a
simple normal crossing divisor on $\cX$.
Then, with $\cU:=\cX\setminus D$ and $\cI:=\{1,\dots,\ell\}$ we have
\begin{align}
\begin{split}
\label{eqn.orbifolddivisors}
[&\cU,(L_1|_\cU,\dots,L_r|_\cU)]
=[\cX,(L_1,\dots,L_r)] \\
&\qquad+\sum_{\emptyset\ne I\subseteq \cI} (-1)^{|I|}
[D_I,(L_1|_{D_I},\dots,L_r|_{D_I},\dots,\mathcal{N}_{D_i/\cX}|_{D_I},\dots)]
\end{split}
\end{align}
in $\oBurn_n$, where the last term includes the line bundles
$\mathcal{N}_{D_i/\cX}|_{D_I}$ for all $i\in I$.
\end{lemm}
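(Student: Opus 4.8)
The plan is to reduce \eqref{eqn.orbifolddivisors} to the orbifold open-complement formula \cite[(1.1)]{Bbar} by passing to the total space of the bundle. Write $\pi\colon\cE\to\cX$ for the total space of $L_1\oplus\dots\oplus L_r$. By Lemma \ref{lem.star} this is a divisorial orbifold of dimension $n$, and by \cite[Thm.\ 4.1]{Bbar} (together with the scissors relations in $\ocB$ that collapse the fiber directions, as in Example \ref{exa.linearquotient}) its class is computed by the stratification of the base:
\[ [\cE]=[\cX,(L_1,\dots,L_r)] \]
in $\oBurn_n$. The same identity, applied over the open locus $\cU$ and over each $D_I$, will be the bridge between the orbifold classes of total spaces and the packaged symbols $[\cdot,(\dots)]$ appearing in the statement; in particular $[\cE|_\cU]=[\cU,(L_1|_\cU,\dots,L_r|_\cU)]$, where $\cE|_\cU=\cE\setminus\pi^{-1}(D)$.

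First I would set up the boundary divisor on $\cE$. Since $\pi$ is smooth and each $D_i$ is a smooth $G$-invariant divisor on $\cX$, the preimages $\pi^{-1}(D_i)$ are smooth $G$-invariant divisors on $\cE$ that cross normally; hence $\pi^{-1}(D)=\bigcup_{i\in\cI}\pi^{-1}(D_i)$ is a simple normal crossing divisor whose complement is $\cE|_\cU$. For $\emptyset\ne I\subseteq\cI$ the intersection $\pi^{-1}(D_I)$ is the total space over $D_I$ of $(L_1\oplus\dots\oplus L_r)|_{D_I}$, with normal bundle
\[ \mathcal{N}_{\pi^{-1}(D_I)/\cE}\cong\bigoplus_{i\in I}\pi^*\mathcal{N}_{D_i/\cX}\big|_{\pi^{-1}(D_I)}. \]
Consequently the total space of $\mathcal{N}_{\pi^{-1}(D_I)/\cE}$ is the total space over $D_I$ of the bundle $\bigoplus_{j}L_j|_{D_I}\oplus\bigoplus_{i\in I}\mathcal{N}_{D_i/\cX}|_{D_I}$, and applying the total-space identification once more gives
\[ [\mathcal{N}_{\pi^{-1}(D_I)/\cE}]=[D_I,(L_1|_{D_I},\dots,L_r|_{D_I},\dots,\mathcal{N}_{D_i/\cX}|_{D_I},\dots)], \]
matching the boundary term indexed by $I$ in \eqref{eqn.orbifolddivisors}.

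With these identifications in place, \eqref{eqn.orbifolddivisors} is exactly \cite[(1.1)]{Bbar},
\[ [\cE|_\cU]=[\cE]+\sum_{\emptyset\ne I\subseteq\cI}(-1)^{|I|}[\mathcal{N}_{\pi^{-1}(D_I)/\cE}], \]
applied to the orbifold $\cE$ and the simple normal crossing divisor $\pi^{-1}(D)$. I expect the main point to check carefully is the total-space identification $[\cE]=[\cX,(L_1,\dots,L_r)]$ and its variants over $\cU$ and the $D_I$: one must confirm that the stabilizer stratification of $\cE$ refines that of the base and that the finer strata in the fiber directions are accounted for, to the last stratum, by the modified scissors relations of \cite[\S 2]{Bbar} repackaged as the representation class $[N_{X,P}\oplus L_1\oplus\dots\oplus L_r]\in\ocB_{n-\dim X_P}$, exactly as in Example \ref{exa.linearquotient}. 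The normal-crossing and normal-bundle bookkeeping for $\pi^{-1}(D)$, being pulled back from $\cX$ along the smooth morphism $\pi$, is then routine, and the requisite divisoriality of the total spaces over the $D_I$ is inherited from that of $\cE$.
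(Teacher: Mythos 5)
Your reduction to the total space $\cE$ of $L_1\oplus\dots\oplus L_r$ is geometrically clean, and the identifications of $\pi^{-1}(D_I)$ and of $\mathcal{N}_{\pi^{-1}(D_I)/\cE}$ with total spaces of bundles over $D_I$ are correct. But the final step is circular. The formula
\[ [\cE|_\cU]=[\cE]+\sum_{\emptyset\ne I\subseteq\cI}(-1)^{|I|}[\mathcal{N}_{\pi^{-1}(D_I)/\cE}] \]
is \emph{not} an instance of \cite[(1.1)]{Bbar}: that formula lives in $\Burn_n$ and concerns the complement of a simple normal crossing divisor in a smooth \emph{variety}. Here $\cE$ is a genuinely stacky orbifold whenever $\cX$ has nontrivial stabilizers, and the displayed identity in $\oBurn_n$ is exactly the $r=0$ case of the lemma you are trying to prove (applied to the $n$-dimensional orbifold $\cE$ with boundary $\pi^{-1}(D)$, after unwinding $[D_I,(\dots,\mathcal{N}_{D_i/\cX}|_{D_I},\dots)]$ as the class of a punctured-normal-bundle total space via your own dictionary). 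So you have reduced the general case to the $r=0$ case on a bigger orbifold, but the $r=0$ case — the passage from varieties to Deligne--Mumford stacks — is the entire content of the statement, and you have not supplied it. Relatedly, the identity $[\cE]=[\cX,(L_1,\dots,L_r)]$, which you flag as "the main point to check," presupposes a definition of the class of a non-proper orbifold and a stratum-by-stratum scissors computation that is again essentially what needs to be proved.

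The paper's proof supplies exactly the missing ingredient, and does so without passing to total spaces: it fixes a geometric stabilizer group $P$ of $\cX$, notes that $\cX_P$ has normal crossings with $D_1,\dots,D_\ell$, and proves the identity one $P$ at a time. When a component of $\cX_P$ lies in some $D_i$, the left-hand side contributes nothing and the right-hand side cancels in pairs indexed by $I$ and $I\cup\{i\}$; otherwise each $L_j$ is generically trivial on $D_I$, the same representation class $[N_{X,P}\oplus L_1\oplus\dots\oplus L_r]$ appears in every term, and the identity collapses to \cite[(1.1)]{Bbar} applied to the coarse moduli spaces $X_P$, $U_P$, $(D_I)_P\times\PP^{|I|}$ (extended from projective to quasiprojective varieties, which is the only easy extension needed). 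To rescue your approach you would have to first prove the $r=0$ orbifold complement formula by some such stratumwise argument, at which point the detour through $\cE$ buys nothing.
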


\begin{proof}
We first remark that \eqref{eqn.orbifolddivisors},
when $\cX$ is a projective variety,
is essentially the formula in $\Burn_n$ for the class of
the complement of a strict normal crossing divisor \cite[(1.1)]{Bbar}.
It is an easy exercise to verify that the formula
is valid when $\cX$ is a quasiprojective variety.

To treat the case of a Deligne-Mumford stack,
we consider a geometric stabilizer group $P$ of $\cX$.
It is a straightforward fact that $\cX_P$
has normal crossing with $D_1$, $\dots$, $D_\ell$.
We obtain the lemma from an equality for each $P$.
When (a connected component of) $\cX_P$ is contained in some $D_i$,
there is no contribution to the left-hand side, while the right-hand side,
viewed as a single sum over all $I\subseteq \cI$, has pairs of terms that cancel,
indexed by $I$ and $I\cup \{i\}$ for $i\notin I$.
Otherwise, each $L_i$ is generically trivial on $D_I$, for all $I$,
and the equality may be rewritten as
\begin{align}
\begin{split}
\label{eqn.UGNUG}
([&U_P],[N_{U,P}\oplus L_1\oplus\dots\oplus L_r])=
([X_P],[N_{X,P}\oplus L_1\oplus\dots\oplus L_r])\\&\qquad+
\sum_{\emptyset\ne I\subseteq \cI} (-1)^{|I|}
([(D_I)_P\times \PP^{|I|}],[N_{D_I,P}\oplus L_1\oplus\dots\oplus L_r]),
\end{split}
\end{align}
where we write $(D_I)_P$ for the
coarse moduli space of $D_I\cap \cX_P$ and neglect to include in the
notation that the line bundles are restricted to $U$, respectively, to
$D_I$.
The same representation is extracted from the vector bundles in
all of the terms in \eqref{eqn.UGNUG}.
So, the equality follows from the case of a
quasiprojective variety mentioned above.
\end{proof}

\begin{lemm}
\label{lem.coarse}
Let $\cX$ be an $n$-dimensional quasiprojective orbifold over $k$ that is
divisorial with respect to some collection of line bundles, and let
$D=D_1\cup\dots\cup D_\ell$ be a simple normal crossing divisor with complement
$\cU$.
Then, with $\cI:=\{1,\dots,\ell\}$,
the coarse moduli spaces $X$ of $\cX$, etc., satisfy
\[ [U]=[X]+\sum_{\emptyset\ne I\subseteq \cI}(-1)^{|I|}[D_I\times \PP^{|I|}] \]
in $\Burn_n$.
\end{lemm}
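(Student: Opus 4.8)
The plan is to deduce the identity from Lemma~\ref{lem.orbifolddivisors} by pushing that orbifold-level formula forward along a homomorphism $c\colon \oBurn_n\to\Burn_n$ defined on generators by
\[ c([Y],[\beta]):=[Y\times\PP^{\dim\beta}], \]
which replaces the representation directions recorded by $[\beta]$ with an honest projective space of the same dimension. The first task is to verify that $c$ respects the defining relations of $\oBurn_n$: the relations in the representation invariant $\ocB_{n-d}$ of \cite{Bbar} should map to blow-up relations among projective bundles in $\Burn_n$, and the orbifold scissors relations to their coarse-space counterparts.

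The key geometric input I would establish is that $c$ computes coarse moduli spaces, i.e.\ $c[\cZ]=[Z]$ for every quasiprojective orbifold $\cZ$ with coarse space $Z$; equivalently, that
\[ [Z]=\sum_P [Z_P\times\PP^{c_P}] \]
in $\Burn_n$, summed over geometric stabilizer groups $P$, where $Z_P$ is the coarse space of the corresponding stratum and $c_P$ its codimension. I would prove this coarse stratification formula by peeling off the deepest (closed) stabilizer stratum: the variety formula \cite[(1.1)]{Bbar} (the case of Lemma~\ref{lem.classUG} with trivial $G$) relates $[Z]$ to the class of the complement of that stratum plus the contribution of its projectivized normal cone, which supplies exactly the factor $\PP^{c_P}$, and induction on the number of strata finishes. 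A subsidiary identity I will use repeatedly is $[Y\times\PP^a\times\PP^b]=[Y\times\PP^{a+b}]$, valid since $\PP^a\times\PP^b$ and $\PP^{a+b}$ are birational via blow-ups.

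Granting these, I apply $c$ to Lemma~\ref{lem.orbifolddivisors} in the case of no auxiliary line bundles ($r=0$, so that the hypotheses of Lemma~\ref{lem.star} reduce to $\cX$ being a divisorial orbifold). The left-hand side becomes $[U]$ and the class $[\cX]$ becomes $[X]$ by the coarse stratification formula. Each boundary term $[D_I,(\mathcal{N}_{D_i/\cX}|_{D_I})_{i\in I}]$ is, by definition, a sum over the stabilizer strata of $D_I$ of generators whose representation is enlarged by the $|I|$ normal characters; applying $c$ gives $\sum_P[(D_I)_P\times\PP^{c_P+|I|}]$, and applying the coarse stratification formula to the orbifold obtained from the stacky $D_I$ by crossing with $\PP^{|I|}$ (whose coarse space is $D_I\times\PP^{|I|}$, with matching stabilizer strata) identifies this image with $[D_I\times\PP^{|I|}]$. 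Assembling these identifications turns Lemma~\ref{lem.orbifolddivisors} into the asserted equality.

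The main obstacle is the coarse stratification formula, together with the closely related well-definedness of $c$: both rest on controlling how the class of a coarse moduli space decomposes along the stabilizer stratification, with the normal representation of each stratum producing a projective-space factor. Once the deepest-stratum peeling is set up so that the projectivized normal cone matches the $\PP^{c_P}$ term coming from \cite[(1.1)]{Bbar}, the remainder is a formal consequence of Lemma~\ref{lem.orbifolddivisors}. One could instead bypass $c$ and argue directly in parallel with Lemma~\ref{lem.orbifolddivisors}, stratifying $\cX$ by stabilizer type and reducing the identity stratum by stratum to \cite[(1.1)]{Bbar}; the same coarse stratification bookkeeping then reappears as the crux.
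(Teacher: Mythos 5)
Your route is genuinely different from the paper's, but as written it has a gap at exactly the point you flag as the crux. The paper's proof does not construct any map $\oBurn_n\to\Burn_n$: it observes that both sides of the asserted identity change in the same way under blow-up of a smooth substack having normal crossings with the $D_i$ (as in the proof of \cite[Thm.\ 4]{kontsevichtschinkel}), then invokes Bergh's destackification algorithm to reduce to the case where the coarse space $X$ is smooth with simple normal crossing boundary, where the identity is literally \cite[(1.1)]{Bbar}. Destackification is precisely the tool that absorbs the difficulty your sketch leaves open, namely the singularities of $X$ along the images of the stabilizer strata.

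Concretely, two steps of your argument are unproven and the second would fail as described. First, well-definedness of $c([Y],[\beta])=[Y\times\PP^{\dim\beta}]$: since $\Burn_n$ is free on field classes, the images of the relations of $\ocB_{n-d}$ and of the coupling relations in $\oBurn_n$ must cancel identically; this is a signed count against \cite[Defn.\ 3.1]{Bbar} that you assert but do not carry out. Second, and more seriously, the ``coarse stratification formula'' $[Z]=\sum_P[Z_P\times\PP^{c_P}]$ cannot be obtained by ``peeling off the deepest stratum and reading the factor $\PP^{c_P}$ from the projectivized normal cone'': the coarse space $Z$ is singular along $Z_P$ (\'etale-locally $Z_P\times\A^{c_P}/P$), so the blow-up of $Z$ along $Z_P$ is not smooth, its exceptional locus is not a $\PP^{c_P-1}$-bundle, and \cite[(1.1)]{Bbar} is not applicable without first resolving. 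What one must actually show is that for a full resolution with exceptional snc divisor $E=\bigcup E_i$ over $Z_P$ the alternating sum $-\sum_{\emptyset\ne I}(-1)^{|I|}[E_I\times\PP^{|I|}]$ equals $[Z_P\times\PP^{c_P}]$ (already for an $A_2$ point the exceptional locus is a chain of two curves and the equality holds only after cancellation); proving this in general is of the same order of difficulty as the lemma itself and is what the paper's blow-up invariance plus destackification argument accomplishes. If you want to keep your two-step structure, you should prove the coarse stratification formula by that same mechanism rather than by normal cones.
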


\begin{proof}
The equality holds if and only if it holds after blow-up of any
smooth substack of $\cX$ that has normal crossings with $D_1$, $\dots$, $D_\ell$
(where the exceptional divisor gets added as $D_{\ell+1}$ in case of center of
blow-up contained in $D$), cf.\ the proof of \cite[Thm.\ 4]{kontsevichtschinkel}.
We conclude by applying the destackification algorithm \cite{bergh},
which leads to a situation where $X$ is smooth with simple
normal crossing divisor determined by the $D_i$.
Then the desired formula is \cite[(1.1)]{Bbar} which, as mentioned in the proof of
Lemma \ref{lem.orbifolddivisors}, is also valid in the setting of
quasiprojective $X$.
\end{proof}

\begin{defi}
\label{defn.toBurnbar}
Let $K_0$ be a finitely generated field over $k$ and
$K/K_0$ a Galois algebra for the group $N_G(H)/H$.
Put
\[
\kappa^G\colon (H,N_G(H)/H\actsfromleft K,\beta) \mapsto [Y\times \A^{n-d}/H],
\]
where $H$ acts trivially on $Y$, any smooth projective variety with
function field $K_0$, and via the representation $\beta$ on $\A^{n-d}$.
\end{defi}

\begin{prop}
\label{prop.toBurnbarwelldefined}
We have a well-defined group homomorphism
\[
\kappa^G\colon {\Burn}_n(G) \to {\oBurn}_n.
\]
\end{prop}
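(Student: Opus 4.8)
The plan is to show that the rule of Definition \ref{defn.toBurnbar} on generators annihilates the four families of defining relations of $\Burn_n(G)$ — the conjugation relations \textbf{(C1)}, \textbf{(C2)} and the blow-up relations \textbf{(B1)}, \textbf{(B2)} — and therefore descends to a homomorphism. First I would record that $\kappa^G$ is well defined on generators: since the class of an orbifold in $\oBurn_n$ is a birational invariant, $[Y\times\A^{n-d}/H]$ depends neither on the chosen model $Y$ with function field $K_0$ (any two are birational, so $Y\times\A^{n-d}$ and $Y'\times\A^{n-d}$ are $H$-equivariantly birational) nor on $\beta$ beyond its equivalence class, and it makes no use at all of the Galois algebra $K/K_0$. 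It is convenient to note at the outset, using the expansion of Example \ref{exa.linearquotient}, that every term indexed by a nonempty $I$ carries a vanishing character and so drops out by Remark \ref{rem.linearquotientzero}; hence
\[ \kappa^G(H,N_G(H)/H\actsfromleft K,\beta)=([Y],[\beta])\in\oBurn_n, \]
the standard generator attached to the base $[Y]\in\Burn_d$ and the representation $[\beta]\in\overline{\mathcal B}_{n-d}$.

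For the conjugation relations there is nothing to compute: conjugation by an element of $N_G(H)$ (for \textbf{(C1)}), respectively by $g\in G$ with $H'=gHg^{-1}$ (for \textbf{(C2)}), is an isomorphism of abelian groups carrying $\beta$ to $\beta'$, and so induces a $k$-isomorphism of quotient stacks $[Y\times\A^{n-d}/H]\cong[Y\times\A^{n-d}/H']$. Equal orbifolds have equal classes, so both relations are respected.

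The content lies in the blow-up relations. Under the identification above, \textbf{(B1)} becomes the assertion that $([Y],[\beta])=0$ when $\beta$ contains a balanced pair $(a_1,-a_1)$, which is precisely the vanishing relation for such a representation in $\overline{\mathcal B}_{n-d}$ from \cite{Bbar} (seen also from the invariant $x_1x_2$, which presents the orbifold as birational to a product with $\A^1$); while \textbf{(B2)} becomes
\[ ([Y],[\beta])=([Y],[\beta_1])+([Y],[\beta_2])+([\overline Y],[\bar\beta]), \]
with $\overline Y$ of dimension $d+1$. I would confirm this on the universal local model $[Y\times\A^{n-d}/H]$: the $H$-equivariant blow-up of the invariant locus $\{x_1=x_2=0\}$, where $x_1,x_2$ carry $a_1,a_2$, leaves the orbifold class unchanged by birational invariance, and re-expressing that class through its strata — exactly as in the derivation of Example \ref{exa.linearquotient} and in Lemma \ref{lem.orbifolddivisors} — produces $\beta_1=(a_1,a_2-a_1,a_3,\dots)$ and $\beta_2=(a_2,a_1-a_2,a_3,\dots)$ from the two coordinate charts (with the case $a_1=a_2$ giving a zero character, hence the vanishing in $\Theta_1$), and, from the stratum lying over the zero section on which the reduced group $\overline H=\ker(a_1-a_2)$ acts with an extra $\PP^1$ of exceptional directions raising the base dimension by one, the term $([\overline Y],[\bar\beta])$ with $\bar\beta=(\bar a_2,\dots,\bar a_{n-d})$ in $\overline A=A/\langle a_1-a_2\rangle$.

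The step I expect to be the main obstacle is the identification of $\kappa^G(\Theta_2)$ with this exceptional stratum, because $\Theta_2$ is manufactured through Construction \textbf{(A)}, whose output $\overline K=\Ind_{\overline H'}^{N_G(\overline H)}K'(t_1)$ involves a factorization of $K$, a choice of factor, and an induction. The resolution is precisely that $\kappa^G$ discards all of this Galois-algebra data and retains only the base field $\overline{K_0}$ and the representation $\bar\beta$; thus the delicate cohomological bookkeeping of Construction \textbf{(A)} is invisible here, and all that must be matched is that the coarse space of the exceptional stratum is birational to $\overline Y$ — of transcendence degree $d+1$ over $k$, the extra dimension being the fiber coordinate $t_1\sim x_2/x_1$ adjoined by Construction \textbf{(A)} — with reduced normal representation $\bar\beta$. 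Both follow from the explicit toric description of the blow-up, reducing the verification to the combinatorics already carried out in Example \ref{exa.linearquotient} and Lemma \ref{lem.orbifolddivisors}. Assembling the four checks yields the desired homomorphism $\kappa^G\colon\Burn_n(G)\to\oBurn_n$.
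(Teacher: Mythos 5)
Your overall architecture---verify \textbf{(C1)}, \textbf{(C2)}, \textbf{(B1)}, \textbf{(B2)} on the local model $[Y\times\A^{n-d}/H]$, using birational invariance of orbifold classes and the blow-up of $\{x_1=x_2=0\}$---matches the paper's, and your parenthetical argument for \textbf{(B1)} (the invariant $x_1x_2$ exhibits the quotient as birational to a product with $\A^1$, whose class vanishes) is a legitimate and arguably more direct alternative to the paper's reduction to the final relation of \cite[Defn.\ 3.1]{Bbar}. However, your opening ``simplification'' $\kappa^G(H,N_G(H)/H\actsfromleft K,\beta)=([Y],[\beta])$ is false, and it is not what Remark \ref{rem.linearquotientzero} says. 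That remark asserts that when one of the \emph{original} characters $a_i$ is zero, the \emph{entire alternating sum} \eqref{eqn.YAH} vanishes, by pairwise cancellation of the terms indexed by $I$ and $I\cup\{i\}$; it does not assert that an individual symbol $([Y],(\bar a_1,\dots,\bar a_{n-d}))$ containing a zero character is zero. Such symbols are nonzero in general---if they all vanished, the sign-cancellation argument in the proof of Lemma \ref{lem.toBurnbar}, and indeed the full expansion recorded in Example \ref{exa.linearquotient}, would be pointless. Consequently your reformulations of \textbf{(B1)} and \textbf{(B2)} as identities among single symbols $([Y],[\beta])$ are not the statements that need to be proved; what must be verified are identities among the full expansions \eqref{eqn.YAH}, i.e., among the stack classes themselves. (Your \textbf{(B1)} argument survives because the $\A^1$-product observation operates directly on the stack, but your stated ``vanishing relation for a balanced pair in $\ocB_{n-d}$'' is not a defining relation there.)

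The second, and more serious, gap is that the step you yourself identify as the main obstacle---matching $\kappa^G(\Theta_2)$ with the exceptional-stratum contribution---is asserted rather than proved, and it does not reduce to ``the combinatorics already carried out in Example \ref{exa.linearquotient}.'' One must (a) treat separately the case $a_i\in\langle a_1-a_2\rangle$ for some $i$, in which $\Theta_2=0$ by definition while the geometric contribution $[Y\times(\A^1\setminus\{0\})\times\A^{n-d-1}/H]$ must be shown to vanish as well (this is where Remark \ref{rem.linearquotientzero} is genuinely needed), and (b) in the remaining case, compare term by term the expansion of $\kappa^G(\Theta_2)$---whose base is the output of Construction \textbf{(A)}---with the expansion of the class of $[Y\times(\A^1\setminus\{0\})\times\A^{n-d-1}/H]$. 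Only after this identification does the decomposition of $[Y\times(B\ell_0\A^2)\times\A^{n-d-2}/H]$ into the two coordinate charts and the punctured exceptional locus, via Lemma \ref{lem.orbifolddivisors}, close the argument. As written, the one equality that carries the actual content of \textbf{(B2)} is left unverified.
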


\begin{proof}
Changing the triple by a conjugation relation does not
change the isomorphism type of the quotient stack
$[Y\times \A^{n-d}/H]$, hence these relations are respected by the
map in Definition \ref{defn.toBurnbar}.
To show that the map respects \textbf{(B1)}, i.e.,
\[
[Y\times \A^{n-d}/H]=0\in \oBurn_n,
\] 
when $H$ acts on
$\A^{n-d}$ with weights $(a_1,\dots,a_{n-d})$, and
$a_1+a_2=0$, we let $\cY:=[Y\times\A^{n-d-2}/H]$ 
with action by weights $a_3$, $\dots$, $a_{n-d}$, and $L$ the line bundle given
by the weight $a_1$.
Using Example \ref{exa.linearquotient}, we verify
\[
[Y\times \A^{n-d}/H]=[\cY,(L,L^\vee)]
-[Y\times \PP^2\times \A^{n-d-2}/\ker(a_1)]\in \oBurn_n.
\]
The right-hand side vanishes, as we see by applying
the final relation of \cite[Defn.\ 3.1]{Bbar} with $a_2=0$ and $j=2$.

Let us rewrite \textbf{(B2)} as
\[
(H,N_G(H)/H \actsfromleft K, \beta)-\Theta_2=
\Theta_1.
\]
To see that the map respects \textbf{(B2)}, we first verify that the
image of $-\Theta_2$ in $\oBurn_n$ is
\begin{equation}
\label{eqn.imagemTheta2}
[Y\times (\A^1\setminus\{0\})\times \A^{n-d-1}/H],
\end{equation}
where the action of $H$ is by $a_1-a_2$ on
$\A^1\setminus\{0\}$ and by $a_2$, $\dots$, $a_{n-d}$ on $\A^{n-d-1}$.
There are two cases:
\begin{itemize}
\item
If $a_i\in \langle a_1-a_2\rangle$ for some $i$; then after
replacing the corresponding coordinate of
$\A^{n-d-1}$ by its product with a suitable power of the
coordinate of $\A^1\setminus\{0\}$, we have a trivial $H$-action on this
coordinate.
As in Remark \ref{rem.linearquotientzero},
\eqref{eqn.imagemTheta2} is $0$, while $\Theta_2=0$ by definition.
\item 
Otherwise, $\Theta_2$ is the class of the triple
given in Definition \ref{defn.eBg}, and we have a formula for the
image in $\oBurn_n$ of $-\Theta_2$ by Example \ref{exa.linearquotient}.
Reasoning as in Example \ref{exa.linearquotient}, we obtain a formula for
\eqref{eqn.imagemTheta2}.
Comparing, we find that the two expressions are equal.
\end{itemize}

Thus it suffices to show that
\[ [Y\times \A^{n-d}/H]+[Y\times (\A^1\setminus\{0\})\times \A^{n-d-1}/H]=
[Y\times \A^{n-d}/H]+[Y\times \A^{n-d}/H] \]
holds in $\oBurn_n$,
where the first term on the left corresponds to the
given triple, and the terms on the right correspond to the triples in
$\Theta_1$.
Equivalently, by \cite[Thm.\ 4.1]{Bbar},
\begin{align*}
[Y\times (B\ell_0\A^2)\times\A^{n-d-2}/H]&+
[Y\times (\A^1\setminus\{0\})\times \A^{n-d-1}/H]\\
&\qquad =[Y\times \A^{n-d}/H]+[Y\times \A^{n-d}/H].
\end{align*}
The stack $[Y\times (\A^1\setminus\{0\})\times \A^{n-d-1}/H]$ as well
as each of the stacks $[Y\times \A^{n-d}/H]$ on the right-hand side
may be viewed as complements of smooth divisors in
$[Y\times (B\ell_0\A^2)\times\A^{n-d-2}/H]$.
We get the desired equality by Lemma \ref{lem.orbifolddivisors}.
\end{proof}

In order to show that $\kappa^G$ maps the class of a variety with
group action to the class of the associated quotient stack
(Proposition \ref{prop.toBurnbar}), we need the following computation,
where we employ the notation
\[ [X\actsfromright G, (L_1,\dots,L_r)]^{\mathrm{naive}}:=[L_1\oplus\dots\oplus L_r\actsfromright G]^{\mathrm{naive}}\in \Burn_n(G) \]
for a smooth quasiprojective variety $X$ with $G$-action and
$G$-linearized line bundles $L_1$, $\dots$, $L_r$, with $X$ of
dimension $n-r$, such that the $G$-action on
the total space of $L_1\oplus\dots\oplus L_r$ is generically free.

\begin{lemm}
\label{lem.toBurnbar}
Let $X$ be a smooth quasiprojective variety with
$G$-action, and let
$L_1$, $\dots$, $L_r$ be $G$-linearized line bundles on $X$, such that
the $G$-action on $L_1\oplus\dots\oplus L_r$ is generically free.
We suppose that $G$ acts transitively on the set of components of $X$,
the dimension of $X$ is $n-r$, and for some component of $X$, every point
has stabilizer $H\subseteq G$.
Then $\kappa^G([X\actsfromright G, (L_1,\dots,L_r)]^{\mathrm{naive}})$
is the class in $\oBurn_n$ of the field $k(X)^G$ and the
representation of $H$ determined by $L_1\oplus\dots\oplus L_r$.
\end{lemm}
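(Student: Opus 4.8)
The plan is to compute $\kappa^G([L_1\oplus\dots\oplus L_r \actsfromright G]^{\mathrm{naive}})$ by passing through the stabilizer stratification of the total space $U:=L_1\oplus\dots\oplus L_r$, evaluating $\kappa^G$ on each resulting symbol by means of Example \ref{exa.linearquotient}, and showing that every contribution but one vanishes. Write $X^{(0)}$ for the component on which every point has stabilizer $H$, and let $a_1,\dots,a_r\in A:=H^\vee$ be the characters by which $H$ acts on the fibres of $L_1,\dots,L_r$ over $X^{(0)}$. Since $G$ is transitive on the components of $X$, every symbol appearing in \eqref{eqn.naive} is represented, via the relations \textbf{(C1)}--\textbf{(C2)}, by a stratum lying over $X^{(0)}$, and since $\kappa^G$ of a symbol depends only on its local data $(H_T,K_0,\beta_T)$ by Definition \ref{defn.toBurnbar}, it suffices to enumerate these.

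The combinatorial set-up is as follows. For $T\subseteq\{1,\dots,r\}$ put $V_T:=\bigoplus_{i\in T}L_i$ over $X^{(0)}$, with generic stabilizer $H_T:=\bigcap_{i\in T}\ker(a_i)$; this is a component of the fixed locus $U^{H_T}$ precisely when $T$ is \emph{saturated}, i.e.\ $a_i|_{H_T}\neq 0$ for all $i\notin T$, and exactly these $V_T$ contribute, with normal representation $\beta_T=(a_i|_{H_T})_{i\notin T}$. The structural point that drives the argument is that $N_G(H_T)/H_T$ acts \emph{generically freely} on the total space $V_T$ (its generic point has stabilizer $H_T$ in $G$), whereas the subgroup $H/H_T$ acts trivially on the base $X^{(0)}$ while $N_G(H_T)/H$ acts generically freely on $X^{(0)}$, every element fixing a point of $X^{(0)}$ lying in $H$.

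I then evaluate $\kappa^G$ term by term. For $T\neq\emptyset$ the bundle $V_T\to X^{(0)}$ has positive rank $|T|$; by the no-name lemma, together with the rationality of quotients of affine space by linear actions of finite abelian groups (valid since $k$ contains enough roots of unity), the coarse quotient $Y_T:=V_T/(N_G(H_T)/H_T)$ admits a fibration over $X^{(0)}/(N_G(H_T)/H)$ whose generic fibre $\A^{|T|}/(H/H_T)$ is rational of positive dimension, hence ruled, so that $Y_T$ itself is ruled and $[Y_T]=0\in\Burn$. As $H_T$ acts trivially on $Y_T$, the image $[Y_T\times\A^{r-|T|}/H_T]$ of the corresponding symbol is then a product with $\A^1$ and vanishes in $\oBurn_n$ (cf.\ Remark \ref{rem.linearquotientzero}). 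It remains to treat $T=\emptyset$, which is saturated exactly when all $a_i\neq 0$; if some $a_i=0$ then $\emptyset$ is not saturated, every remaining stratum has positive rank and vanishes as above, and the asserted class $([k(X)^G],[\rho])$ is itself $0$ because $\rho$ has a trivial summand, so both sides agree. When all $a_i\neq 0$, the zero section $X^{(0)}$ has stabilizer $H$, quotient field $k(X)^G$, and faithful normal representation $\rho=(a_1,\dots,a_r)$; applying Example \ref{exa.linearquotient} to $[Y_\emptyset\times\A^r/H]$, each correction term indexed by $\emptyset\neq I$ contains a trivial character $\bar a_i$ $(i\in I)$ and so vanishes, leaving precisely $([k(X)^G],[\rho])$. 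Summing over all strata yields the claim.

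The main obstacle is the vanishing of the positive-rank strata $T\neq\emptyset$: one must exhibit a \emph{global} $\A^1$-factor in $Y_T$, not merely its rationality, since rational varieties do not have trivial Burnside class in general. This is exactly where the no-name lemma is indispensable—it upgrades the fibrewise rationality supplied by Fischer's theorem to an honest ruling over the base quotient—and some care is required because $N_G(H_T)/H_T$ does \emph{not} act generically freely on the base $X^{(0)}$, the subgroup $H/H_T$ acting trivially there; the lemma is therefore applied only after descending to the generically free quotient $N_G(H_T)/H$.
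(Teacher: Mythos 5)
Your stratification of $L_1\oplus\dots\oplus L_r$ by generic stabilizer agrees with the paper's, and your final answer is the right one, but the two vanishing claims that do all the work are both false, so the proof has a genuine gap. (1) For $T\ne\emptyset$ you deduce $[Y_T]=0\in\Burn$ from ruledness. But $\Burn_d$ is the free abelian group on function fields of transcendence degree $d$: a \emph{projective} model of a ruled variety is a free generator, not zero. What vanishes is the class of the \emph{open} variety $Z\times\A^1$, via the boundary correction $[Z\times\PP^1]-[Z\times\PP^1]$; and Definition \ref{defn.toBurnbar} evaluates $\kappa^G$ on a smooth \emph{projective} model of $K_0$, so the quasiprojectivity of $V_T/(N_G(H_T)/H_T)$ is invisible to the symbol. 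Your appeal to Remark \ref{rem.linearquotientzero} conflates a $\PP^1$-factor (what a ruling gives) with an $\A^1$-factor (a coordinate with trivial action); only the latter kills the class. (2) In the $T=\emptyset$ step you assert that a symbol $([Y],(\bar a_1,\dots,\bar a_r))$ with some $\bar a_i=0$ vanishes. It does not: in $\oBurn_n$ a zero character is traded for a transcendence degree, $([K],(0,\beta))=([K(t)],\beta)$, again a nonzero generator --- which is exactly why Remark \ref{rem.linearquotientzero} derives the vanishing of the \emph{whole sum} \eqref{eqn.YAH} by pairwise cancellation of identical terms rather than term by term. A minimal counterexample to both claims: $X=\Spec k$, $G=H=\Z/2$, $r=1$, $L_1$ the sign character $a$. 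The stratum $T=\{1\}$ contributes $\kappa^G\bigl((1,\Z/2\actsfromleft k(t),())\bigr)=[\PP^1]=([k],(0))\ne 0$, the stratum $T=\emptyset$ contributes $([k],(a))-([k],(0))$ by Example \ref{exa.linearquotient}, and the asserted answer $([k],(a))$ emerges only because $\pm([k],(0))$ cancel \emph{across} the two strata.

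That cross-stratum cancellation is the actual content of the lemma, and it is what your argument omits. The paper writes the image under $\kappa^G$ of every stratum (saturated $I$) via Example \ref{exa.linearquotient}, rewrites the $|I|$ extra transcendence degrees as $|I|$ zero characters, extends the sum over non-saturated $I$ using Remark \ref{rem.linearquotientzero}, and then observes in \eqref{eqn.sumprimesum} that for each fixed $M=I\sqcup J\ne\emptyset$ the coefficients $\sum_{J\subseteq M}(-1)^{|J|}$ vanish, leaving only $(I,J)=(\emptyset,\emptyset)$. The terms you discard do sum to zero, but only through this inclusion--exclusion; since the reasons you give for discarding them individually are not valid relations in $\oBurn_n$, the proof as written does not establish the lemma.
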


\begin{proof}
Let $A:=H^\vee$, with $a_1$, $\dots$, $a_r\in A$ determined by
$L_1$, $\dots$, $L_r$.
With $\cI:=\{1,\dots,r\}$ and, for $I\subseteq \cI$,
\[ H_I:=\bigcap_{i\in I}\ker(a_i)\qquad\text{and}\qquad A_I:=A/\langle a_i\rangle_{i\in I}, \]
we have an expression of $[X\actsfromright G,(L_1,\dots,L_r)]^{\mathrm{naive}}$
as a sum of triples over $I\subseteq \cI$, such that
for all $j\in \cI\setminus I$ we have $a_j\notin \langle a_i\rangle_{i\in I}$.
The triple for a given $I$ consists of the group $H_I$, a Galois algebra over
$k(X)^G(t_1,\dots,t_{|I|})$, and the classes of $a_j$ in $A_I$, for
$j\in \cI\setminus I$.
By Example \ref{exa.linearquotient}, upon application of $\kappa^G$ such a triple
gives rise to a sum indexed by subsets of $\cI$, disjoint from $I$:
\begin{equation}
\label{eqn.sumprimesum}
\kappa^G([X\actsfromright G, (L_1,\dots,L_r)]^{\mathrm{naive}})=
{\sum_{I\subseteq \cI}}'\sum_{\substack{J\subseteq \cI\\ I\cap J\ne \emptyset}}
(-1)^{|J|}([k(X)^G],(\bar a_1,\dots,\bar a_r)),
\end{equation}
where $\sum'$ denotes the sum over $I$ satisfying
$a_j\notin \langle a_i\rangle_{i\in I}$ for all $j\in \cI\setminus I$,
and $\bar a_i$ denotes the class of $a_i$ in $A_{I\cup J}$.
By Remark \ref{rem.linearquotientzero}, if we consider some $I$ such that
$a_j\in \langle a_i\rangle_{i\in I}$ for some $j\in \cI\setminus I$ and
evaluate the inner sum of the right-hand side of
\eqref{eqn.sumprimesum} for this $I$, we get $0$.
So
\[
\kappa^G([X\actsfromright G, (L_1,\dots,L_r)]^{\mathrm{naive}})=
\sum_{\substack{I,J\subseteq I\\ I\cap J\ne \emptyset}}
(-1)^{|J|}([k(X)^G],(\bar a_1,\dots,\bar a_r)).
\]
Due to the sign, only the term with $I=J=\emptyset$ survives.
\end{proof}

\begin{prop}
\label{prop.toBurnbar}
For a quasiprojective variety $X$ of dimension $n$ with generically free $G$-action
and stack quotient $\cX:=[X/G]$ we have
\[
\kappa^G([X \actsfromright G])=[\cX]
\]
in $\oBurn_n$.
\end{prop}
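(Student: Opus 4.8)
The plan is to compute $\kappa^G$ directly on the generators produced by the definition of $[X\actsfromright G]$ and to match the outcome, term by term, with the stratification \eqref{eqn.classcX} of $[\cX]$. Since $\kappa^G$ is a homomorphism and both $[X\actsfromright G]$ (Theorem \ref{thm.maingood}, Proposition \ref{prop.classUebi}) and $[\cX]$ are birational invariants, I would first fix a convenient presentation: choose a smooth projective $\overline{X}$ satisfying Assumption \ref{assu.strong}, with $X=\overline{X}\setminus D$ and $D=\bigcup_i D_i$ a simple normal crossing divisor with $G$-invariant components, as in Definition \ref{defn.main}. One cannot simply replace $X$ by $\overline{X}$, since the open and compact classes differ by boundary contributions; instead I would carry the boundary along, writing $[X\actsfromright G]$ via \eqref{eqn.classUG} and writing the class of the open orbifold $\cX=[X/G]$ via the orbifold boundary formula of Lemma \ref{lem.orbifolddivisors} (with $r=0$), applied to $\overline{\cX}:=[\overline X/G]$ and $\mathcal{D}_i:=[D_i/G]$. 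Both expansions then have the shape (compact class) $+\sum_{\emptyset\ne I}(-1)^{|I|}$(boundary normal-bundle class) over the same index set, so it suffices to match them under $\kappa^G$ termwise.

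The core is the projective case $\kappa^G([\overline X\actsfromright G])=[\overline{\cX}]$. By Definition \ref{defn.classXG}, $[\overline X\actsfromright G]$ is a sum of symbols indexed by pairs $(H_i,Y)$, where $Y$ is an $N_G(H_i)$-orbit of components of $\overline X^{H_i}$ with generic stabilizer $H_i$; these pairs are in bijection with the strata $\overline{\cX}_{H_i}$ of \eqref{eqn.classcX}, the coarse space of $\overline{\cX}_{H_i}$ being birational to $Z=Y/N_G(H_i)$ (function field $k(Y)^{N_G(H_i)}$) with normal representation $\beta_Y(\overline X)$. Applying Definition \ref{defn.toBurnbar} gives $\kappa^G\big((H_i,N_G(H_i)/H_i\actsfromleft k(Y),\beta_Y)\big)=[Y_0\times\A^{n-d}/H_i]$, where $Y_0$ has function field $k(Z)$. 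The key identity is then
\[
[Y_0\times\A^{n-d}/H_i]=([Y_0],[\beta_Y])\qquad\text{in }\oBurn_n,
\]
which follows from Example \ref{exa.linearquotient} once one observes that each term there indexed by a nonempty $I$ has a representation with a trivial summand and hence vanishes as a product with $\A^1$ (Remark \ref{rem.linearquotientzero}); equivalently this is a special case of Lemma \ref{lem.toBurnbar}. Recognizing $([Y_0],[\beta_Y])=([X_{H_i}],[N_{X,H_i}])$, summation over $(H_i,Y)$ reproduces $[\overline{\cX}]$ through \eqref{eqn.classcX}.

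For the boundary terms I would apply Lemma \ref{lem.toBurnbar} to each $[\mathcal{N}_{D_I/\overline X}\actsfromright G]^{\mathrm{naive}}$. Since $\mathcal{N}_{D_I/\overline X}=\bigoplus_{i\in I}\mathcal{N}_{D_i/\overline X}|_{D_I}$ is a sum of $|I|$ line bundles over $D_I$, stratifying $D_I$ by stabilizer and invoking Lemma \ref{lem.toBurnbar} on each stratum (where the base has constant stabilizer, after shrinking by Lemma \ref{lem.easy}) identifies $\kappa^G([\mathcal{N}_{D_I/\overline X}\actsfromright G]^{\mathrm{naive}})$ with the orbifold class $[\mathcal{D}_I,(\mathcal{N}_{\mathcal{D}_i/\overline{\cX}}|_{\mathcal{D}_I})_{i\in I}]$ coming from \eqref{eqn.classcX} for the normal-bundle orbifold of $\mathcal{D}_I$. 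This is exactly the $I$-th boundary term of Lemma \ref{lem.orbifolddivisors}, so the termwise match closes and $\kappa^G([X\actsfromright G])=[\cX]$.

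The main obstacle I anticipate is bookkeeping rather than a single hard estimate: one must verify that the two boundary expansions \eqref{eqn.classUG} and Lemma \ref{lem.orbifolddivisors} are genuinely parallel — same index set $I$, same signs, and with $\kappa^G$ sending the $I$-th equivariant naive normal-bundle class precisely to the $I$-th orbifold normal-bundle class — uniformly over the stabilizer stratifications of the $D_I$. Everything hinges on the collapse $[Y_0\times\A^{n-d}/H]=([Y_0],[\beta])$, in which all deeper strata drop out as products with $\A^1$ (their coarse spaces carry $\G_m$-factors); securing this vanishing together with the stratum bijection is the delicate point. Conveniently, the required inclusion--exclusion is already packaged in Lemma \ref{lem.toBurnbar}, whose proof performs exactly this cancellation, so the argument reduces to assembling the identified pieces.
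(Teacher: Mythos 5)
The central step of your argument is the claimed collapse
\[
[Y_0\times\A^{n-d}/H]=([Y_0],[\beta_Y])\qquad\text{in }\oBurn_n,
\]
and this is false. In Example \ref{exa.linearquotient} the term indexed by a nonempty $I$ is the symbol $([Y_0],(\bar a_1,\dots,\bar a_{n-d}))$ for the \emph{quotient} group $H_I$, in which the characters $\bar a_i$ with $i\in I$ are zero; such a symbol is \emph{not} zero in $\oBurn_n$. Remark \ref{rem.linearquotientzero} does not say that a symbol containing a trivial character vanishes: it says that the \emph{whole class} $[Y\times\A^{n-d}/H]$ vanishes when the action itself has a trivial character (so the stack is globally a product with $\A^1$), and even then the vanishing of the sum \eqref{eqn.YAH} is obtained by \emph{pairwise cancellation} of nonzero terms indexed by $I$ and $I\cup\{i\}$ --- precisely because the individual terms do not vanish. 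A concrete check: for $H=\Z/2\Z$ acting on $\PP^1$ by $t\mapsto -t$, the symbol $(H,1\actsfromleft k,(a))$ attached to a fixed point maps under $\kappa^G$ to $[\A^1/H]=([\mathrm{pt}],(a))-([\mathrm{pt}],(0))=([\mathrm{pt}],(a))-[\PP^1]$, not to the stratum class $([\mathrm{pt}],(a))$; likewise the open-stratum symbol maps to $[\PP^1]$ while the stratum class of $\G_m$ in \eqref{eqn.classcX} is $-[\PP^1]$. The total sums agree, but your proposed term-by-term bijection between symbols of $[\overline X\actsfromright G]$ and strata of \eqref{eqn.classcX} does not hold. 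Your parenthetical ``equivalently this is a special case of Lemma \ref{lem.toBurnbar}'' is also not correct: that lemma computes $\kappa^G$ of the \emph{naive class of the entire total space} $L_1\oplus\dots\oplus L_r$, which is a sum of triples over all strata of the total space, and the collapse to a single term there comes from the signed double sum \eqref{eqn.sumprimesum}; it says nothing about $\kappa^G$ of a single triple.

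Once the false collapse is removed, the real difficulty appears: the spurious terms produced by $\kappa^G$ on the symbol of one stratum must cancel against contributions coming from the strata in its closure, and organizing this cancellation is the actual content of the proof. The paper does it by reducing, via Definition \ref{defn.classUG} and Lemma \ref{lem.orbifolddivisors}, to the relative statement $\kappa^G([X\actsfromright G,(L_1,\dots,L_r)]^{\mathrm{naive}})=[\cX,(L_1,\dots,L_r)]$ for line bundles on a projective base, and then arguing by induction on $\dim X$: divisorialification arranges constant stabilizers on an open part $U$, a variant of Lemma \ref{lem.classUG} splits the naive class into the $U$-part, punctured normal bundle parts, and lower-dimensional $D_I$-parts, and Lemma \ref{lem.toBurnbar} plus Lemma \ref{lem.coarse} handle the first two while the induction hypothesis handles the last. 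Your outline has no mechanism playing the role of this induction, so the gap is not mere bookkeeping.
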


\begin{proof}
Definition \ref{defn.classUG} and Lemma \ref{lem.orbifolddivisors} allow us
to reduce the proposition to the claim, that for a
smooth projective variety $X$ with $G$-action and $G$-linearized line bundles
$L_1$, $\dots$, $L_r$, such that the $G$-action on
$L_1\oplus \dots\oplus L_r$ is generically free and satisfies
Assumption \ref{assu.strong}, with $X$ of dimension $n-r$, we have
\begin{equation}
\label{eqn.reducetoX}
\kappa^G([X\actsfromright G,(L_1,\dots,L_r)]^{\mathrm{naive}})=
[\cX,(L_1,\dots,L_r)],
\end{equation}
where the line bundles on $\cX$ are those determined by the
$G$-linearized line bundles $L_1$, $\dots$, $L_r$.

We prove \eqref{eqn.reducetoX} by induction on $\dim(X)$.
The case $\dim(X)=0$ follows from Lemma \ref{lem.toBurnbar}.
So we assume $\dim(X)>0$.

The class in $\Burn_n(G)$ on the left-hand side of \eqref{eqn.reducetoX} is
a birational invariant, by Lemma \ref{lem.Uebi}.
If we blow up $X$ along a smooth $G$-invariant subvariety, then
$\cX$ gets blown up along the corresponding smooth substack.
We verify that the right-hand side of \eqref{eqn.reducetoX} remains unchanged,
by observing that the equality \cite[(4.1)]{Bbar} is valid when
$L_1\oplus\dots\oplus L_r$ is inserted on both sides.
So, by divisorialification
as in Proposition \ref{prop.divisorialification},
we may reduce further to
the case that $X$ possesses a simple normal crossing divisor
$D=D_1\cup\dots\cup D_\ell$, where each $D_i$ is $G$-invariant, such that
$U:=X\setminus D$ has constant stabilizers, as in Lemma \ref{lem.toBurnbar}.

With a minor adaptation to the proof of Lemma \ref{lem.classUG},
taking $W$ in $L_1|_{D_M}\oplus\dots\oplus L_r|_{D_M}$ instead of in $D_M$,
we obtain the identity
\begin{align*}
[&X \actsfromright G, (L_1,\dots,L_r)]^{\mathrm{naive}} =
[ U \actsfromright G, (L_1|_U,\dots,L_r|_U)]^{\mathrm{naive}} \\
&\,\,\,\,\,+\sum_{\emptyset\ne I\subseteq \cI} (-1)^{|I|}
[\mathcal{N}^\circ_{D_I/X} \actsfromright G, (L_1|_{D_I},\dots,L_r|_{D_I})]^{\mathrm{naive}} \\
&\!\!\!\!\!\!\!-\sum_{\emptyset\ne I\subseteq \cI} (-1)^{|I|}
[D_I \actsfromright G,(L_1|_{D_I},\dots,L_r|_{D_I},\dots,\mathcal{N}_{D_i/X}|_{D_I},\dots)]^{\mathrm{naive}},
\end{align*}
where the last term includes the line bundles
$\mathcal{N}_{D_i/X}|_{D_I}$ for all $i\in I$.
After applying $\kappa^G$,
Lemma \ref{lem.toBurnbar} is applicable to the first two terms on the right,
then Lemma \ref{lem.coarse} lets us identify their contribution as the
class in $\Burn_{n-r}$ of the quotient variety of $U$ by $G$, paired with the
class in $\ocB_r$ of the
representation of the generic stabilizer determined by $L_1$, $\dots$, $L_r$.
The induction hypothesis is applicable to the final term on the right,
and we conclude by Lemma \ref{lem.orbifolddivisors}.
\end{proof}

\section{Comparisons}
\label{sect:comp}

In this section, $G$ is a finite \emph{abelian} group.
We will compare birational invariants
for actions of such $G$ introduced in 
\cite{kontsevichpestuntschinkel} with the invariant in Definition \ref{defn.classXG}, taking values in  
the equivariant Burnside group 
\[
\Burn_n(G)
\] 
from Definition \ref{defn.eBg}. 

Let $A:=G^\vee$ be the character group of $G$.
We recall from \cite{kontsevichpestuntschinkel}:
\[
\cB_n(G)
\]
is the $\Z$-module 
generated by equivalence classes of faithful
$n$-dimensional linear representations of $G$ over $k$, 
i.e., by symbols 
\begin{equation}
\label{eqn.BnGsymbol}
\beta:=[a_1,\dots, a_n], \quad \quad a_i\in A, 
\end{equation}
consisting of $n$-tuples of characters of $G$, up to order, generating $A$;
these are subject to relations, for all $2\le j\le n$:
\begin{align}
\begin{split}
\label{eqn.BnGrel}
[&a_1,\dots,a_n]\\
&\qquad=\sum_{\substack{1\le i\le j\\ a_i\ne a_{i'}\,\forall\,i'<i}}    
[a_1-a_i, \dots, a_i,  \dots, a_j-a_i, a_{j+1}, \dots, a_n].
\end{split}
\end{align}

We first explain how to obtain $\cB_n(G)$ as a quotient of $\Burn_n(G)$.
There is a quotient group $\Burn_n^G(G)$ of $\Burn_n(G)$,
consisting of triples with first entry $G$:
\[
(G,\mathrm{triv}\actsfromleft K,\beta).
\]
The homomorphism
\[ \Burn_n(G)\to \Burn_n^G(G) \]
to the quotient group annihilates all triples whose first entry is a
proper subgroup of $G$.
The next result reveals a further quotient group, isomorphic to $\cB_n(G)$.

\begin{prop}
\label{prop.toBnG}
The map sending the class of a triple
\[ 
(G,\mathrm{triv}\actsfromleft K,\beta) \in \Burn_n^G(G),
\]
with
$\beta=(a_1,\dots,a_{n-d})$, and $d$ the transcendence degree of
$K$ over $k$, to
\[ [k':k][a_1,\dots,a_{n-d},0,\dots,0] \in  \cB_n(G),
\]
where $k'$ denotes the algebraic closure of $k$ in $K$, extends to a
surjective homomorphism
\[ \Burn_n^G(G)\to \cB_n(G). \]
For $n\ge 2$, the group $\cB_n(G)$ may be presented as a $\Z$-module
by the generators \eqref{eqn.BnGsymbol} and
just the relations \eqref{eqn.BnGrel} with $j=2$.
\end{prop}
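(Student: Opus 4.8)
The plan is to prove the first assertion by verifying that the stated assignment on generators respects the defining relations of $\Burn_n^G(G)$ and is surjective, and then to deduce the presentation by noting exactly which relations of $\cB_n(G)$ this verification consumes. Write $\Phi$ for the assignment $(G,\mathrm{triv}\actsfromleft K,\beta)\mapsto [k':k]\,[a_1,\dots,a_{n-d},0,\dots,0]$; it is well defined on generators, depending only on $[k':k]$, on $d$, the transcendence degree of $K$ over $k$, and on $\beta$ up to order. Since $G$ is abelian, conjugation is trivial, so \textbf{(C1)} and \textbf{(C2)} are respected automatically. For \textbf{(B1)} the image is $[k':k]\,[a_1,-a_1,a_3,\dots,a_{n-d},0,\dots,0]$, and I would show it vanishes by applying the $j=2$ case of \eqref{eqn.BnGrel} to the symbol $[a_1,0,a_3,\dots,a_{n-d},0,\dots,0]$ (a valid generator, as its nonzero entries still generate $A$): the two resulting terms are $[a_1,-a_1,\dots]$ and the original symbol, forcing $[a_1,-a_1,\dots]=0$.

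The substance of well-definedness lies in \textbf{(B2)}, where the fate of $\Theta_2$ in the quotient $\Burn_n^G(G)$ must be traced. When $a_1\neq a_2$ the group $\overline H=\ker(a_1-a_2)$ is a \emph{proper} subgroup of $G$, so $\Theta_2$ maps to $0$, and the image of the relation is precisely the $j=2$ instance of \eqref{eqn.BnGrel} for $(a_1,a_2,a_3,\dots,a_{n-d},0,\dots,0)$. When $a_1=a_2$ we have $\Theta_1=0$, while Construction \textbf{(A)} applied to the zero character $a_1-a_2$ returns the full-group triple $\Theta_2=(G,\mathrm{triv}\actsfromleft K(t_1),(a_1,a_3,\dots,a_{n-d}))$ of transcendence degree $d+1$, whose image matches that of the left-hand side via the $j=2$ identity $[a_1,a_1,\dots]=[a_1,0,\dots]$, the extra transcendence degree being absorbed into one further zero. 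In every case only $j=2$ relations of $\cB_n(G)$ are invoked, a point I reuse below.

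For surjectivity I would hit an arbitrary generator $[a_1,\dots,a_n]$ by reordering its nonzero characters to the front, say $a_1,\dots,a_{n-s}$, and lifting to $(G,\mathrm{triv}\actsfromleft k(t_1,\dots,t_s),(a_1,\dots,a_{n-s}))$; here $k'=k$, the coefficient is $1$, and Assumption \ref{assu.weak} holds trivially because for the trivial action $\rH^1(G,k(t_1,\dots,t_s)^\times)=\Hom(G,k^\times)=A$. For the presentation, the fact that well-definedness used only the $j=2$ relations means $\Phi$ factors as a surjection $\Burn_n^G(G)\to\cB_n(G)^{(2)}$, where $\cB_n(G)^{(2)}$ is presented by \eqref{eqn.BnGsymbol} and the $j=2$ relations alone, followed by the canonical surjection $\cB_n(G)^{(2)}\to\cB_n(G)$. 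To see the latter is an isomorphism it suffices to check that \eqref{eqn.BnGrel} with $j\geq 3$ already holds in $\cB_n(G)^{(2)}$. When the $j$ active characters are all nonzero this follows from Proposition \ref{prop.eBg}(ii): that identity holds in $\Burn_n(G)$, descends to $\Burn_n^G(G)$ (where proper-subgroup triples vanish), and its image under the above surjection equals the corresponding instance of \eqref{eqn.BnGrel}, the surviving $H_I=G$ terms reproducing the symbol combinatorics and the zero-padding accounting for repeated weights. When an active character is zero, say at the first-occurrence index $k$, the term $i=k$ reproduces the whole symbol while every other term carries both $a_i$ and $a_k-a_i=-a_i$ and hence vanishes by $[x,-x,\dots]=0$; so the relation again holds.

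I expect the main obstacle to be the \textbf{(B2)} bookkeeping together with the index matching in the presentation step: one must track precisely how $\Theta_2$ either dies or re-emerges as a full-group triple of one higher transcendence degree, and then align the index $j$ and the positions of the padding zeros between Proposition \ref{prop.eBg}(ii) and \eqref{eqn.BnGrel}, keeping the collapsed equal-weight contributions (zeros) and the transcendence-degree padding in correspondence.
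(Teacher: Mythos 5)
Your proposal is correct and follows essentially the same route as the paper's (much terser) proof: verify \textbf{(B1)} via the $j=2$ relation with a zero entry, verify \textbf{(B2)} by noting $\Theta_2$ dies in $\Burn_n^G(G)$ unless $a_1=a_2$ (in which case it re-emerges with one extra transcendence degree, matched by a padding zero), and reduce the $j>2$ relations to $j=2$ via the combinatorics of Proposition \ref{prop.eBg}(ii) together with the vanishing $[x,-x,\dots]=0$ when some active character is zero. Your write-up simply makes explicit the bookkeeping (coefficient $[k':k]$, surviving $H_I=G$ terms, alignment of collapsed weights with padding zeros) that the paper leaves to the reader.
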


\begin{proof}
The quantity $[k':k]$ is unchanged when $K$ is replaced by $K(t)$.
So, to verify that the map is a homomorphism, it suffices to verify relations
analogous to \textbf{(B1)} and \textbf{(B2)} in
$\cB_n(G)$.
For $\textbf{(B1)}$, we apply \eqref{eqn.BnGrel} with $j=2$ and $a_2=0$.
The general $j=2$ case of \eqref{eqn.BnGrel} takes care of
\textbf{(B2)}.
It is clear that the homomorphism is surjective.
For the final claim, we just have to check that in the abelian group defined by
generators \eqref{eqn.BnGsymbol} and
relations \eqref{eqn.BnGrel} with $j=2$, the relations
\eqref{eqn.BnGrel} with $j>2$ follow as a consequence.
If any of $a_1$, $\dots$, $a_j$ is $0$, then the relation is a consequence
of the relation analogous to \textbf{(B1)}, which we have already treated.
Otherwise, the relation is analogous to 
Proposition \ref{prop.eBg} (ii) and thus follows from relations
analogous to \textbf{(B1)} and \textbf{(B2)}.
\end{proof}

A refined version of $\cB_n(G)$ keeps information about
the stable birational type of $K$.
This refined group
\[ \cB_n(G,k) \]
is defined by associating to a function field $K$
of transcendence degree $d$ over $k$:
\begin{itemize}
\item The isomorphism type of $K(t_1,\dots,t_{n-1-d})$, encoding the
stable birational type of a model of $K$, when the formation
of products with $\bP^1$ is restricted by the requirement of dimension $\le n-1$.
\item An element of $\cB_{m+1}(G)$, where $m$ is the
largest integer such that $K(t_1,\dots,t_{n-1-d})$ is
isomorphic over $k$ to $L(u_1,\dots,u_m)$ for some field $L$,
finitely generated over $k$.
\end{itemize}
We refer the reader to \cite{kontsevichpestuntschinkel} for a more
detailed description and the definition of the class
$\beta_k(X)\in \cB_n(G,k)$ of a smooth projective variety $X$ with faithful $G$-action.

\begin{prop}
\label{prop.toBnGk}
The surjective homomorphism from Proposition \ref{prop.toBnG} factors through
$\cB_n(G,k)$,
and the homomorphism
\begin{equation}
\label{eqn.toBnGk}
\Burn_n^G(G)\to \cB_n(G,k)
\end{equation}
is surjective.
If $X$ is a smooth projective variety with a generically free
action of $G$, then the image of
$[X\actsfromright G]$ in $\Burn_n^G(G)$ maps to
$\beta_k(X)$ under the map \eqref{eqn.toBnGk}.
\end{prop}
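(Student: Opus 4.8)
The plan is to produce a lift $\widetilde\phi\colon \Burn_n^G(G)\to\cB_n(G,k)$ of the homomorphism of Proposition \ref{prop.toBnG}. I would define $\widetilde\phi$ on generators by sending $(G,\mathrm{triv}\actsfromleft K,\beta)$, with $\beta=(a_1,\dots,a_{n-d})$ and $d=\mathrm{trdeg}(K/k)$, to the element of $\cB_n(G,k)$ indexed by the stable birational type of $K$ — equivalently of $K(t_1,\dots,t_{n-1-d})$ — carrying the symbol $[a_1,\dots,a_{n-d},0,\dots,0]\in\cB_{m+1}(G)$, where $m$ is as in the definition of $\cB_n(G,k)$ and $m+1-(n-d)\ge 0$ zeros are appended. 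Granting that $\widetilde\phi$ is well defined, the factorization is immediate: the natural projection $\cB_n(G,k)\to\cB_n(G)$ forgets the stable birational type, records the geometric multiplicity $[k':k]$ (with $k'$ the algebraic closure of $k$ in $K$), and includes $\cB_{m+1}(G)\hookrightarrow\cB_n(G)$ by appending $n-(m+1)$ further zeros; composing with $\widetilde\phi$ recovers the map of Proposition \ref{prop.toBnG} on the nose.

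The first substantive step is to check that $\widetilde\phi$ respects the relations. The conjugation relations \textbf{(C1)}--\textbf{(C2)} are vacuous, since $G$ is abelian and conjugation acts trivially on $G^\vee$. For \textbf{(B1)} I would use that every character of a valid $\beta$ is nonzero (trivial invariants); then $[a_1,0,a_3,\dots]$ is a valid symbol of the same length, and \eqref{eqn.BnGrel} with $j=2$ applied to the pair $(a_1,0)$ forces $[a_1,-a_1,a_3,\dots]=0$. For \textbf{(B2)} there are two cases. If $a_1\ne a_2$, then $\Theta_2$ has first entry $\overline H=\ker(a_1-a_2)$, a proper subgroup, hence vanishes in $\Burn_n^G(G)$, while the image of $\Theta_1$ is exactly the right-hand side of \eqref{eqn.BnGrel} with $j=2$. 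If $a_1=a_2$, then $\Theta_1=0$ and Construction \textbf{(A)} applied to the zero character gives $\Theta_2=(G,\mathrm{triv}\actsfromleft K(t_1),(a_2,\dots,a_{n-d}))$; since $K$ and $K(t_1)$ share the same stable birational type and the same $m$, both triples land in one summand $\cB_{m+1}(G)$, where the identity $[a,a,\dots]=[a,0,\dots]$ matches the two symbols. This invariance under adjunction of a rational variable is the delicate point of the verification.

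For surjectivity I would realize each generator of $\cB_n(G,k)$, indexed by a field $K'$ with symbol $[b_1,\dots,b_{m+1}]$: writing $K'\cong L(u_1,\dots,u_m)$, the triple $(G,\mathrm{triv}\actsfromleft L,(b_1,\dots,b_{m+1}))$ is sent to it by $\widetilde\phi$. If some $b_i=0$, I would first absorb each vanishing character into the field, replacing $L$ by a rational extension $L(u)$ and dropping that character from the representation; this does not alter the stable birational type, and iterating yields a representation with no zero character, as a valid generator requires.

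Finally, for the statement about $[X\actsfromright G]$, the projection $\Burn_n(G)\to\Burn_n^G(G)$ annihilates every summand of \eqref{eqn.classXG} with $H_i\ne G$, leaving $\sum_Y(G,\mathrm{triv}\actsfromleft k(Y),\beta_Y(X))$, the sum over components $Y$ of the fixed locus $X^G$ with generic stabilizer $G$; Assumption \ref{assu.strong} for $H=G$ ensures that the characters of $\beta_Y(X)$ generate $G^\vee$, so each term is a genuine symbol. Applying $\widetilde\phi$ termwise yields the sum over these components of the class of $k(Y)$ paired with its normal representation, which is the defining expression for $\beta_k(X)$ in \cite{kontsevichpestuntschinkel}. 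I expect this identification to be the main obstacle, as it requires matching our normalization — a model satisfying Assumption \ref{assu.strong}, with the field tracked through $K(t_1,\dots,t_{n-1-d})$ — against the precise recipe defining $\beta_k(X)$. Since both the image of $[X\actsfromright G]$ and $\beta_k(X)$ are equivariant birational invariants computed by the same fixed-locus prescription, it should suffice to compare them on a single common model, where the two expressions agree term by term.
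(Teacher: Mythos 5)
Your proposal is correct and follows exactly the route the paper intends: its proof of this proposition is the single sentence ``analogous to that of Proposition \ref{prop.toBnG},'' and your argument is precisely that analogy carried out in detail — lifting the map to the refined group, rechecking \textbf{(B1)} and \textbf{(B2)} there (with the $a_1=a_2$ case of \textbf{(B2)} handled by the invariance of the stable birational type and of $m$ under $K\mapsto K(t)$), and matching the fixed-locus sum against the definition of $\beta_k(X)$. The only quibble is that faithfulness of $\beta_Y(X)$ (hence generation of $G^\vee$) comes from generic freeness of the action rather than from Assumption \ref{assu.strong}, but this does not affect the argument.
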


\begin{proof}
The proof is analogous to that of Proposition \ref{prop.toBnG}.
\end{proof}

\bibliographystyle{plain}
\bibliography{BnG}

\begin{thebibliography}{10}

\bibitem{abramovichtemkin}
D.~Abramovich and M.~Temkin.
\newblock Functorial factorization of birational maps for qe schemes in
  characteristic 0.
\newblock {\em Algebra Number Theory}, 13(2):379--424, 2019.

\bibitem{abramovichvistoli}
D.~Abramovich and A.~Vistoli.
\newblock Compactifying the space of stable maps.
\newblock {\em J. Amer. Math. Soc.}, 15(1):27--75, 2002.

\bibitem{alper}
J.~Alper.
\newblock Good moduli spaces for {A}rtin stacks.
\newblock {\em Ann. Inst. Fourier (Grenoble)}, 63(6):2349--2402, 2013.

\bibitem{bergh}
D.~Bergh.
\newblock Functorial destackification of tame stacks with abelian stabilisers.
\newblock {\em Compos. Math.}, 153(6):1257--1315, 2017.

\bibitem{berghrydh}
D.~Bergh and D.~Rydh.
\newblock Functorial destackification and weak factorization of orbifolds,
  2019.
\newblock {\tt arXiv:1905.00872}.

\bibitem{bierstonemilman}
E.~Bierstone and P.~D. Milman.
\newblock Canonical desingularization in characteristic zero by blowing up the
  maximum strata of a local invariant.
\newblock {\em Invent. Math.}, 128(2):207--302, 1997.

\bibitem{blanc-thesis}
J.~Blanc.
\newblock Linearisation of finite abelian subgroups of the {C}remona group of
  the plane.
\newblock {\em Groups Geom. Dyn.}, 3(2):215--266, 2009.

\bibitem{bruin-prym}
N.~Bruin.
\newblock The arithmetic of {P}rym varieties in genus 3.
\newblock {\em Compos. Math.}, 144(2):317--338, 2008.

\bibitem{CP}
J.-L. Colliot-Th\'{e}l\`ene and A.~Pirutka.
\newblock Hypersurfaces quartiques de dimension 3: non-rationalit\'{e} stable.
\newblock {\em Ann. Sci. \'{E}c. Norm. Sup\'{e}r. (4)}, 49(2):371--397, 2016.

\bibitem{DI}
I.~V. Dolgachev and V.~A. Iskovskikh.
\newblock Finite subgroups of the plane {C}remona group.
\newblock In {\em Algebra, arithmetic, and geometry: in honor of {Y}u. {I}.
  {M}anin. {V}ol. {I}}, volume 269 of {\em Progr. Math.}, pages 443--548.
  Birkh\"{a}user Boston, Boston, MA, 2009.

\bibitem{HKT}
B.~Hassett, A.~Kresch, and Yu. Tschinkel.
\newblock Stable rationality and conic bundles.
\newblock {\em Math. Ann.}, 365(3-4):1201--1217, 2016.

\bibitem{HPT}
B.~Hassett, A.~Pirutka, and Yu. Tschinkel.
\newblock Stable rationality of quadric surface bundles over surfaces.
\newblock {\em Acta Math.}, 220(2):341--365, 2018.

\bibitem{HT}
B.~Hassett and Yu. Tschinkel.
\newblock On stable rationality of {F}ano threefolds and del {P}ezzo
  fibrations.
\newblock {\em J. Reine Angew. Math.}, 751:275--287, 2019.

\bibitem{JLY}
C.~U. Jensen, A.~Ledet, and N.~Yui.
\newblock {\em Generic polynomials: Constructive aspects of the inverse
  {G}alois problem}.
\newblock Cambridge University Press, Cambridge, 2002.

\bibitem{KKMS}
G.~Kempf, F.~Knudsen, D.~Mumford, and B.~Saint-Donat.
\newblock {\em Toroidal embeddings. {I}}.
\newblock Lecture Notes in Mathematics, Vol. 339. Springer-Verlag, Berlin-New
  York, 1973.

\bibitem{kontsevichpestuntschinkel}
M.~Kontsevich, V.~Pestun, and Yu. Tschinkel.
\newblock Equivariant birational geometry and modular symbols, 2019.
\newblock {\tt arXiv:1902.09894}, to appear in J. Eur. Math. Soc.

\bibitem{kontsevichtschinkel}
M.~Kontsevich and Yu. Tschinkel.
\newblock Specialization of birational types.
\newblock {\em Invent. Math.}, 217(2):415--432, 2019.

\bibitem{Bbar}
A.~Kresch and Yu. Tschinkel.
\newblock Birational types of algebraic orbifolds, 2019.
\newblock {\tt arXiv:1910.07922}.

\bibitem{loo}
E.~Looijenga.
\newblock Motivic measures.
\newblock In {\em S\'{e}minaire Bourbaki. Volume 1999/2000}, volume 276 of {\em
  Ast\'{e}risque}, pages 267--297. Soci\'et\'e Math\'ematique de France, Paris,
  2002.

\bibitem{manin}
Yu.~I. Manin.
\newblock {\em Cubic forms}, volume~4 of {\em North-Holland Mathematical
  Library}.
\newblock North-Holland Publishing Co., Amsterdam, second edition, 1986.
\newblock Translated from the Russian by M. Hazewinkel.

\bibitem{nicaise}
J.~Nicaise.
\newblock Geometric criteria for tame ramification.
\newblock {\em Math. Z.}, 273(3-4):839--868, 2013.

\bibitem{NO}
J.~Nicaise and J.~Ottem.
\newblock Tropical degenerations and stable rationality, 2019.
\newblock {\tt arXiv:1911.06138}.

\bibitem{NS}
J.~Nicaise and E.~Shinder.
\newblock The motivic nearby fiber and degeneration of stable rationality.
\newblock {\em Invent. Math.}, 217(2):377--413, 2019.

\bibitem{oesinghaus}
J.~Oesinghaus.
\newblock Conic bundles and iterated root stacks.
\newblock {\em Eur. J. Math.}, 5(2):518--527, 2019.

\bibitem{olssonboundedness}
M.~Olsson.
\newblock A boundedness theorem for {H}om-stacks.
\newblock {\em Math. Res. Lett.}, 14(6):1009--1021, 2007.

\bibitem{reichstein-y}
Z.~Reichstein and B.~Youssin.
\newblock A birational invariant for algebraic group actions.
\newblock {\em Pacific J. Math.}, 204(1):223--246, 2002.

\bibitem{sch1}
S.~Schreieder.
\newblock On the rationality problem for quadric bundles.
\newblock {\em Duke Math. J.}, 168(2):187--223, 2019.

\bibitem{sch2}
S.~Schreieder.
\newblock Stably irrational hypersurfaces of small slopes.
\newblock {\em J. Amer. Math. Soc.}, 32(4):1171--1199, 2019.

\bibitem{serre-cremona}
J.-P. Serre.
\newblock Le groupe de {C}remona et ses sous-groupes finis.
\newblock In {\em S\'{e}minaire Bourbaki. Volume 2008/2009}, volume 332 of {\em
  Ast\'{e}risque}, pages 75--100. Soci\'et\'e Math\'ematique de France, Paris,
  2010.

\bibitem{totaro}
B.~Totaro.
\newblock Hypersurfaces that are not stably rational.
\newblock {\em J. Amer. Math. Soc.}, 29(3):883--891, 2016.

\bibitem{villamayor}
O.~E. Villamayor~U.
\newblock Patching local uniformizations.
\newblock {\em Ann. Sci. \'{E}cole Norm. Sup. (4)}, 25(6):629--677, 1992.

\bibitem{voisin}
C.~Voisin.
\newblock Unirational threefolds with no universal codimension {$2$} cycle.
\newblock {\em Invent. Math.}, 201(1):207--237, 2015.

\end{thebibliography}

\end{document}